\theoremstyle{thmrm}
\theoremstyle{plain}
\newtheorem{thm}{Theorem}[section]
\newtheorem{lemma}[thm]{Lemma}
\newtheorem{prop}[thm]{Proposition}
\newtheorem{cor}[thm]{Corollary}
\newtheorem{defn}[thm]{Definition}
\newtheorem{example}[thm]{Example}
\theoremstyle{definition}
\newtheorem{remark}[equation]{Remark}
\newcommand{\dlabel}[1]{\ifmmode \text{\ttfamily \upshape [#1] } \else
{\ttfamily \upshape [#1] }\fi \label{#1}}
\newcommand{\B}{\operatorname{B} }
\newcommand{\C}{\operatorname{C} }
\newcommand{\Ho}{\operatorname{H} }
\newcommand{\Z}{\operatorname{Z} }
\newcommand{\im}{\operatorname{Im} }
\newcommand{\Id}{\operatorname{Id}}
\newcommand{\Aut}{\operatorname{Aut} }
\newcommand{\Autb}{\operatorname{Autb} }
\newcommand{\Ext}{\operatorname{Ext} }
\newcommand{\Fun}{\operatorname{Fun} }
\newcommand{\Inn}{\operatorname{Inn} }
\newcommand{\Ker}{\operatorname{Ker} }
\newcommand{\Out}{\operatorname{Out} }
\newcommand{\IM}{\operatorname{Im} }
\title{Extensions and automorphisms of Rota-Baxter groups}
\author{Apurba Das}
\address{Department of Mathematics, Indian Institute of Technology Kharagpur, Kharagpur-721302, West Bengal, India.}
\email{apurbadas348@gmail.com,  apurbadas348@maths.iitkgp.ac.in}
\author{Nishant}
\address{School of Mathematics, Harish-Chandra Research Institute, HBNI,
Chhatnag Road, Jhunsi, Allahabad - 211 019, INDIA}
\email{nishant@hri.res.in}
\subjclass[2010]{20E22, 20J05, 20J06,16T25}
\keywords{Rota-Baxter group , skew braces ; extension; cohomolgy; automorphism}
\begin{document}

\maketitle

\begin{abstract}
The notion of Rota-Baxter groups was recently introduced by Guo, Lang and Sheng [{\em Adv. Math.} 387 (2021), 107834, 34 pp.] in the geometric study of Rota-Baxter Lie algebras. They are closely related to skew braces as observed by Bardakov and Gubarev. In this paper, we study extensions of Rota-Baxter groups by constructing suitable cohomology theories. Among others, we find relations with the extensions of skew braces. Given an extension of Rota-Baxter groups, we also construct a short exact sequence connecting various automorphism groups, which generalizes the Wells short exact sequence.

\end{abstract}

\tableofcontents

\section{Introduction}
The Rota-Baxter operators, first introduced in 1960 by G. Baxter  \cite{GB60} in the fluctuation theory of probability. Such operators can be viewed as a generalization of the integral operator on the algebra of continuous functions. In the past two decades, these operators gained great importance due to their connections with combinatorics, splitting of operads, solutions of the classical Yang-Baxter equation, Poisson geometry and mathematical physics \cite{GCR1969} \

In \cite{LHY2021}, L. Guo, H. Lang and Y. Sheng first introduced Rota-Baxter operators of weight $1$ on abstract groups. A group equipped with a Rota-Baxter operator of weight $1$ is called a Rota-Baxter group. They also considered smooth Rota-Baxter operators on Lie groups and showed that such operators can be differentiated to Rota-Baxter operators of weight $1$ on the corresponding Lie algebra. In \cite{VV2022},  V. G. Bardakov and V. Gubarev finds a connection between Rota-Baxter groups and left skew braces.
%
%
%

V. G. Bardakov and V. Gubarev showed that every Rota-Baxter operator on group induces a skew brace structure and every skew brace can be embedded inside a Rota-Baxter group.  The concept of skew brace is generalized  as weak brace, semi brace,  inverse semi brace see \cite{CMMS22}, \cite{CCS21}, \cite{CCS2021}.  A weak (left) brace is an algebraic structure $(S, +, \circ)$ where $(S, +)$ and $(S, \circ)$ are both inverse semigroup and  for all $a, b, c \in S$, the following compatibility conditions holds:
$$a \circ (b + c ) = (a \circ b) - a + (a \circ c) \hspace{10mm} \mbox{ and } \hspace{10mm} a \circ a^\prime =-a+a,$$
where $-a, a^\prime$ are inverse with respect to $+$ and $\circ$. Rota-Baxter operators on groups are further generalized to Clifford semigroups  and it has been shown that every Rota-Baxter operator on a Clifford semigroup induces a weak brace, see \cite{CMP}.  It is well known that skew braces produce non-degenerate solution of Yang-Baxter equation and  constructing Rota-Baxter operators on groups are equivalent to construct non-degenerate solutions of set-theoretical Yang-Baxter equation and these solutions need not to be non-degenerate in the case of Rota-Baxter operator on the Clifford semi-group.  Cohomology and extension theory for weighted Rota-Baxter algebra and Lie algebra is investigated in \cite{AD1}, \cite{WZ}.  A cohomology theory for relative Rota-Baxter operator is developed in \cite{JSZ}.  Cohomology theory for the extensions of  brace by an abelian group with trivial actions was investigated by L.Vendramin and V.Labed, see \cite{LV16}. A unified extension theory for non-abelian Rota-Baxter algebra and dendiform algebra by introducing non-abelian cohomology is studied in \cite{AN}. Cohomology theory for extensions of linear cycle sets have been developed by J. A. Guccione, J. J. Guccione and C. Valqui in \cite{GGV21}. Extensions with non-trivial actions, the second cohomology group and wells type exact sequence for abelian and non-abelian extensions of the skew brace are developed in \cite{DB18},\cite{NMY1}, \cite{N1}.  Extensions and Well's exact sequence for groups are investigated in very fine details, see \cite{PSY18}, \cite{JL10} The question about the extensions of Rota-Baxter operator is asked in \cite{VV2022}. In this article we answer the same.

We develop extension theory for Rota-Baxter groups, define the second cohomology group of a Rota-Baxter group acting on a abelian group and establish connections between these.  We define a faithful action of  Rota-Baxter extensions of $(H, R_H)$ by $(\Z(I), R_I)$ on  Rota-Baxter extensions of $(H, R_H)$ by $(I, R_I)$. We give a necessary and sufficient condition for inducing Rota-Baxter on semi-direct product of two Rota-Baxter groups. Further, we present an exact sequence connecting automorphism groups of  Rota-Baxter groups with second cohomology group.

\section{Rota-Baxter groups}
In this section, we recall Rota-Baxter groups and their relation with left skew braces. Our main references are \cite{LHY2021,VV2022}.

\begin{defn}
Let $(G, \cdot)$ be a group. A set map $R_G: G \rightarrow G$ is said to be Rota-Baxter operator of weight $1$ on the group $G$ if 
\begin{align*}
R_G(x) \cdot R_G(y)= R_G(x \cdot R_G(x) \cdot y \cdot R_G(x)^{-1}), \text{for all } x, y \in G.
\end{align*}
A group $(G, \cdot)$ equipped with a Rota-Baxter operator of weight $1$ is called a Rota-Baxter group. We denote a Rota-Baxter group simply by the pair $(G,R_G)$.
\end{defn}

\begin{defn}
Let $(G_1, R_{G_1})$ and $(G_2, R_{G_2})$ be two Rota-Baxter groups.  A morphism of Rota-Baxter groups from  $(G_1, R_{G_1})$ to $(G_2, R_{G_2})$ is a group homomorphism $\phi : G_1 \rightarrow G_2$ which satisfies $\phi \circ R_{G_1}=  R_{G_2} \circ \phi $.
\end{defn}

Let $(G, R_G)$ be a Rota-Baxter group. A Rota-Baxter subgroup of $(G,R_G)$ is a subgroup $H \subseteq G$ that satisfies $R_G (H) \subseteq H$. It follows that, if $H$ is a Rota-Baxter subgroup then $(H, R_{G}|_H)$ is itself a Rota-Baxter group, and the inclusion map $H \hookrightarrow G$ is a morphism of Rota-Baxter groups.


\begin{defn}
A  triple $(E,+,\circ)$, where $(E,+)$ and $(E, \circ)$ are groups is said to be a skew left brace if 
$$a \circ (b+c)=a\circ b-a+a \circ c,$$
holds for all $a,b,c \in E$, where $-a$ denotes the inverse of $a$ in $(E, +)$.
\end{defn}

Next, we state a important result established by V. Bardakov and V. Gubarev in \cite{VV2022}, which links Rota-Baxter groups and skew left braces.

\begin{thm}
Let $(G, R_G)$ be a Rota-Baxter group.  For $x, y \in G$  the operation
\begin{align}\label{gpop}
	x \circ_{R_G} y:=x \cdot R_G(x)\cdot y\cdot R_G(x)^{-1}
\end{align}
defines a group structure on $G$ and $(G, \cdot, \circ_{R_G} )$ is a skew left brace.
\end{thm}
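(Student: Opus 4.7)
The plan is to first unpack the Rota-Baxter axiom into the more suggestive form $R_G(x \circ_{R_G} y) = R_G(x) \cdot R_G(y)$; this is a mere rewriting of the defining identity, but it says that $R_G$ will be a homomorphism from $(G, \circ_{R_G})$ to $(G, \cdot)$ once $\circ_{R_G}$ is known to be a group operation, and it is the identity that powers every subsequent calculation. From here I would verify the group axioms in the order identity, associativity, inverses, and finally check the skew left brace compatibility.

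For the identity, setting $x = y = e$ in the Rota-Baxter axiom gives $R_G(e)^2 = R_G(e)$, hence $R_G(e) = e$; plugging this into the definition of $\circ_{R_G}$ shows that $e$ is two-sided. The candidate inverse for $x$ under $\circ_{R_G}$ is $\bar{x} := R_G(x)^{-1} \cdot x^{-1} \cdot R_G(x)$, and a direct substitution yields $x \circ_{R_G} \bar{x} = e$; the opposite equality then follows either by a symmetric calculation or from associativity together with one-sided inversion.

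The main computational step is associativity. Expanding
\begin{align*}
(x \circ_{R_G} y) \circ_{R_G} z &= (x \circ_{R_G} y) \cdot R_G(x \circ_{R_G} y) \cdot z \cdot R_G(x \circ_{R_G} y)^{-1}
\end{align*}
and substituting $R_G(x \circ_{R_G} y) = R_G(x) R_G(y)$, the two central factors $R_G(x)^{-1} \cdot R_G(x)$ cancel, leaving $x \cdot R_G(x) \cdot y \cdot R_G(y) \cdot z \cdot R_G(y)^{-1} \cdot R_G(x)^{-1}$, which is exactly the expansion of $x \circ_{R_G} (y \circ_{R_G} z)$. The skew brace compatibility $x \circ_{R_G} (y \cdot z) = (x \circ_{R_G} y) \cdot x^{-1} \cdot (x \circ_{R_G} z)$ is then a one-line check: in the right-hand side the inner factors $R_G(x)^{-1} \cdot x^{-1} \cdot x \cdot R_G(x)$ collapse to $e$, producing $x \cdot R_G(x) \cdot y \cdot z \cdot R_G(x)^{-1}$ as required.

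I expect the only non-formal step to be the recognition, at the very start, that the Rota-Baxter axiom should be re-read as asserting multiplicativity of $R_G$ with respect to $\circ_{R_G}$; once this reformulation is in hand, associativity, the inverse formula, and the brace identity all reduce to routine cancellation of matching $R_G(\cdot)^{\pm 1}$ factors, and no property of the operator beyond the Rota-Baxter axiom itself is needed.
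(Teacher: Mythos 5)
Your proof is correct, and it is essentially the standard argument: the paper itself states this theorem without proof, citing Bardakov and Gubarev \cite{VV2022}, where the verification runs along exactly the lines you give (re-reading the Rota--Baxter axiom as $R_G(x \circ_{R_G} y) = R_G(x)\cdot R_G(y)$, then checking the group axioms and the brace identity by cancellation). All your steps check out, including the appeal to the standard fact that an associative operation with a two-sided identity and right inverses is a group, so nothing further is needed.
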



\begin{remark}
 Every morphism of Rota-Baxter groups is a morphism of induced skew braces but converse need not be true. 
 \end{remark}
\begin{defn}
Let $(I, R_I)$ and $(H, R_H)$ be two Rota-Baxter groups.  By a \emph{Rota-Baxter extension} of $(H, R_H)$ by $(I, R_I)$,  we mean  a Rota-Baxter group $(E,R_E)$ with an exact sequence 
$$\mathcal{E} := 0 \to I \stackrel{i}{\to}  E \stackrel{\pi}{\to} H \to 0,$$ such that $i$ and $\pi$ are morphisms of Rota-Baxter groups.  We denote $i(y)=y$, which imply that $R_{E}$ restricted to $I$ is $R_I$.
By a set-theoretical section of $\mathcal{E}$  $($st-section in short$)$, we mean a map $s : H \rightarrow E$ such that $\pi s= Id_{H}$ and $s(0)=0$.
\end{defn}

\begin{remark}
 Every extension $\mathcal{E}$ of Rota-Baxter groups is an extension of skew braces $H$ by  $I$ but converse need not be true.
\end{remark}

\section{Cohomology of Rota-Baxter groups}

In this section, we investigate the relationship between modules with respect to usual group operation and group operation induced by Rota-Baxter operator. Additionally, we observe the connection between corresponding cohomologies.

Let $(H, R_H)$ be a Rota-Baxter group with group operation denoted by `$\cdot$'.  Let $I$ be a right $(H, \cdot)$-module.  For each $y \in I$ and $x \in H$, the action $ y*h =y R_H(h)$ defines an $(H, \circ_{R_H})$-module structure on $I$.  Let $C^n(H, I)$ be the set of all functions from $H^n$ to $I$ which vanishes on all degenerate tuples.

For every right $(H, \cdot)$-module $I$,  we have a cochain complex $( C^n(H, I),\delta^n )$, where coboundary map  $\delta^n : C^n(H, I) \rightarrow C^{n+1}(H, I)$ defined by 
\begin{align*}
(\delta^n f)(h_1, h_2,\ldots, h_{n+1}) = & f(h_2, \ldots,h_{n+1})+ \sum^{n}_{k=1}  (-1)^k  f(h_1, \ldots, h_k \cdot h_{k+1}, \ldots, h_{n+1})\\
& +f(h_1, \ldots,h_{n})h_{n+1}.
\end{align*}
Now, using the action of $(H, \circ_{R_H})$ on $I$, we can  define  a new cochain complex $( C^n(H, I),\partial^n )$ with the couboundary map  $\partial^n :  C^n(H, I) \rightarrow C^{n+1}(H, I)$ defined by
\begin{align*}
(\partial^n f)(h_1, h_2,\ldots, h_{n+1})= & f(h_2, \ldots,h_{n+1})+ \sum^{n}_{k=1} (-1)^k f(h_1, \ldots, h_k \circ_{R_H} h_{k+1}, \ldots, h_{n+1})\\
& +f(h_1, \ldots,h_{n})R_H(h_{n+1}).
\end{align*}
Now, consider an action of a Rota-Baxter group on an abelian Rota-Baxter group.  Let $(I, R_I)$ be an abelian Rota-Baxter group and $(H, R_H)$ be an arbitrary Rota-Baxter group such that $I$ is a right  $H$-module.  Since every group acts on its module via automorphism, therefore there exists an anti-homomorphism $\mu: H \rightarrow \Aut(I)$.  We denote $\mu(h)$ by $\mu_h$ for all $h \in H$. We assume that $\mu_h$ is an Rota-Baxter automorphism of $(I, R_I)$ for all $h \in H$. 

Let $C^{n}_{RB}(H,I):=C^n(H, I) \bigoplus C^{n-1}(H, I) \bigoplus C^{n}(H, I)$ and in particular $C^{1}_{RB}(H,I):=C^1(H, I) \bigoplus 		C^{1}(H, I)$.

Define $\delta_{RB}^{n} : C^{n}_{RB}(H,I) \rightarrow C^{n+1}_{RB}(H,I)$ by 
$$\delta_{RB}^{1}(f, g)=(\delta^1(f),  \bar{f}+ R_{I}(g), \partial^1(g)),$$
 $$\delta_{RB}^{n} (f, g, h)=(\delta^n(f), \partial^{n-1}(g)+ (-1)^{n+1}(\bar{f}-R_I(h)),  \partial^n(h)) \hspace{10mm} \textrm{ for } n\geq 2,
 $$
where $\bar{f}(h_1, \ldots, h_n)=f(R_H(h_1), \ldots, R_H(h_{n}))$ and $R_I(f)(h_1, \ldots, h_{n})=R_I(f(h_1, \ldots, h_{n+1})).$

Using the commutativity of $R_I$ and $ \mu_{R_H(h)}$, we have $\overline{\delta^n(f)}=\partial^n(\bar{f})$ and  $R_I(\partial^n(f))=\partial^n(R_I(f))$.

Now
\begin{align*}
\delta^2_{RB}( \delta^1_{RB}(f, g)) & = \delta^2_{RB}(\delta^1(f),  \bar{f}- R_{I}(g), \partial^1(g))\\
&=(\delta^2(\delta^1(f)), \partial^1(\bar{f}-R_I(g))-\overline{\delta^1(f)}+R_I(\partial^1(g)), \partial^2(\partial^1(g)))\\
&=0,
\end{align*}
 and for $n \geq 2$,
 \begin{align*}
\delta^{n+1}_{RB}( \delta^{n}_{RB}(f, g, h)) & = \delta^{n+1}_{RB}((\delta^n(f), \partial^{n-1}(g)+ (-1)^{n+1}(\bar{f}-R_I(h)),  \partial^n(h))\\
&=(0,  \partial^{n}(\partial^{n-1}(g)+ (-1)^{n+1}(\bar{f}-R_I(h)))+(-1)^{n+2}(\overline{\delta^n(f)}-R_I(\partial^n(h))), 0)\\
&= 0.
\end{align*}
Above observation shows that $(C^n_{RB}(H, I), \delta_{RB}^n)$ is a cochain complex.

Let $\sigma : (H, \circ_{R_H} ) \rightarrow \Aut(I) $ be an anti-homomorphism such that $R_I \sigma_h=\mu_{R_H(h)} R_I$. It seems that this assumption is  artificial but in extension of Rota-Baxter groups such action comes naturally. Let $\partial_{\circ}^n : C^n(H, I) \rightarrow C^{n+1}(H, I)$  be the coboundary map with respect to the  action $\sigma$ of $(H, \circ_{R_H})$ on $I$. It is easy to see that $R_I (\partial^n_{\circ}(f)) =\partial^n (R_I(f))$.

Define $\partial_{RB}^{n} : C^{n}_{RB}(H,I) \rightarrow C^{n+1}_{RB}(H,I)$ 
$$\partial_{RB}^{1}(f, g)=(\delta^1(f),  \bar{f}+ R_{I}(g), \partial_{\circ}^1(g))
,$$ 
and 
$$\partial_{RB}^{n} (f, g, h)=(\delta^n(f), \partial^{n-1}(g)+ (-1)^{n+1}(\bar{f}-R_I(h)),  \partial_{\circ}^n(h))  \hspace{10mm} \textrm{ for } n\geq 2.
$$
We have
\begin{align*}
\partial_{RB}^{2} \partial_{RB}^1(f,g) &= \partial_{RB}^{2}(\delta^1(f),  \bar{f}+ R_{I}(g), \partial_{\circ}^1(g))\\
&=(0, \partial^1(\bar{f}+ R_{I}(g))-(\bar{\delta^1(f)}-R_I(\partial^1_{\circ}(g), 0)\\
&=0.
\end{align*}
Similarly one can check that $\partial_{RB}^{n} \partial_{RB}^{n-1}(f,g, h)=0$ for $n \geq 2$. This shows that $(C^{n}_{RB}(H,I), \partial^n_{RB})$ is a cochain  complex.\

Next, we define module over a Rota-Baxter group.
\begin{defn}
	Let $I$ be an abelian group and $R_I : I \rightarrow I$  be a homomorphism. We say that $(I, R_I)$ is a right $(H, R_H)$-module if $I$ is a right $H$-module by an  action $\mu$ and the following condition holds
\begin{align}
\mu_{R_H(h)}(R_I(z))= R_I\big(\mu_{ h R_H( h)}(z  +R_I(z))  -\mu_{R_H(h)}(R_I(z))\big)
\end{align}
 for all $h \in H$ and $z \in I$.  
\end{defn} 
\begin{example}
Let $(H, R_H)$ be any Rota-Baxter group and $I$ be a trivial $H$-module that is $yh=y$ for all $y \in I$ and $h \in H$. Then $(I, R_I)$ is a right $(H, R_H)$-module for all homomorphisms $R_I : I \rightarrow I$ . 
\end{example}

\begin{example}
Let $(H, R_H)$ be a Rota-Baxter group and $I$ be a right $H$-module.  Then $(I ,R_I)$ is a $(H, R_H)$-module, where $R_I : I \rightarrow I$ is the trivial homomorphism.
\end{example}

\begin{example}
Let $I$ be a right $H$-module and $R_I(x)=-x$ then  $(I, R_I)$ is a $(H, R_H)$-module for all Rota-Baxter operators on $H$.
\end{example}

\begin{example} 
Let $R_I^2=-R_I$ and $R_I$ commutes with action of $H$ on $I$. Then   $(I, R_I)$ is a $(H, R_H)$-module.
\end{example}

Let  $(I, R_I)$ be a $(H, R_H)$-module.  Define $TC^n_{RBE}:= C^n(H, I)$ $\oplus$ $C^{n-1}(H, I)$  for $n \leq 3$.  More precisely        $TC^1_{RBE}:= C^1(H, I)$, $TC^2_{RBE}:= C^2(H, I)$ $\oplus$ $C^{1}(H, I)$ and  $TC^3_{RBE}:= C^3(H, I)$ $\oplus$ $C^{2}(H, I)$.

Define $\partial^1_{RBE} : TC^1_{RBE}(H, I) \rightarrow TC^2_{RBE}(H, I) $ by 
$$
\partial^1_{RBE}(\theta):= (\delta^1 (\theta),  -\Phi^1(\theta)),
$$ 
where $\Phi^1 (\theta(h))=R_I( \mu_{R_H(h)}(\theta(h))) -\theta(R_H(h))$.  

Define  $\partial^2_{RBE}: TC^2_{RBE} \rightarrow TC^3_{RBE}$ by
\begin{align*}
\partial^2_{RBE}(f,  g):=(\delta^2f ,\beta),
\end{align*}
and  $\beta$ is given by
\begin{align}\label{valofb}
\beta(h_1, h_2) = &  \partial^1(g)(h_1, h_2) -R_I \big(\mu_{ R_H(h_2)}(\mu_{h_2}(g(h_1))- g(h_1)\big)- \Phi^2(f)(h_1, h_2),
\end{align}
where
\begin{align*}
\Phi^2(f)(h_1, h_2)  =&  f(R_H(h_1), R_H(h_2))-R_I \big(\mu_{R_H(h_1 \circ_{R_H} h_2)}(f(h_1R_H(h_1), h_2R_H(h_1)^{-1})\\
& + \mu_{h_2 R_H(h_1)^{-1}}(f(h_1, R_H(h_1))) + f(h_2, R_H(h_1)^{-1})-f(R_H(h_1), R_H(h_1)^{-1})\big).
\end{align*}

\begin{lemma}
$\im(\partial^1_{RBE}) \subseteq \Ker(\partial^2_{RBE})$.
\end{lemma}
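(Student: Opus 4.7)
The plan is to show $\partial^2_{RBE}(\partial^1_{RBE}(\theta)) = 0$ for every $\theta \in TC^1_{RBE}(H,I) = C^1(H,I)$. Setting $f := \delta^1\theta$ and $g := -\Phi^1(\theta)$, so that $\partial^1_{RBE}(\theta) = (f,g)$, the first component of $\partial^2_{RBE}(f,g)$ is $\delta^2(\delta^1\theta)$, which vanishes because $\delta$ is the standard coboundary of the complex $(C^n(H,I),\delta^n)$ and therefore squares to zero. What remains is to verify that the second component $\beta \in C^2(H,I)$ defined by \eqref{valofb} vanishes identically.

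To do this I would substitute $g = -\Phi^1(\theta)$ and $f = \delta^1\theta$ into \eqref{valofb} and collect terms by the point at which $\theta$ is evaluated. The three summands of $\beta$ each produce a manageable list of such terms: $\partial^1(g)(h_1,h_2)$ unpacks, via $\Phi^1(\theta)(h) = R_I(\mu_{R_H(h)}(\theta(h))) - \theta(R_H(h))$ and the $\circ_{R_H}$-action $y\ast h = \mu_{R_H(h)}(y)$, into terms in $\theta(h_1)$, $\theta(h_2)$, $\theta(h_1\circ_{R_H}h_2)$ and their $R_H$-images; the middle correction $-R_I(\mu_{R_H(h_2)}(\mu_{h_2}(g(h_1))-g(h_1)))$ only involves $\theta(h_1)$ and $\theta(R_H(h_1))$; and $\Phi^2(\delta^1\theta)(h_1,h_2)$ is a sum of six evaluations of $\delta^1\theta$ at various products, producing up to eighteen $\theta$-terms. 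The cancellations are driven by three inputs: the Rota-Baxter identity, which gives the critical equalities $R_H(h_1)R_H(h_2) = R_H(h_1R_H(h_1)h_2R_H(h_1)^{-1}) = R_H(h_1\circ_{R_H}h_2)$ together with $h_1R_H(h_1)h_2R_H(h_1)^{-1} = h_1\circ_{R_H}h_2$, identifying several $\theta$-terms that look distinct; the commutation relations $R_I\mu_h = \mu_h R_I$ and $R_I\mu_{R_H(h)} = \mu_{R_H(h)}R_I$ already invoked to derive $\overline{\delta^n f} = \partial^n\bar f$; and the $(H,R_H)$-module identity, which rewrites $\mu_{R_H(h)}(R_I(z))$ in exactly the form needed to absorb the extra $R_I$-twisted contributions coming out of $\Phi^2$.

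The main obstacle is purely bookkeeping: there is no further ingredient to find, only a careful pairing of roughly a dozen terms that must cancel. Indeed, the definition of an $(H,R_H)$-module and the sign conventions in $\Phi^1$, $\Phi^2$ and $\beta$ appear to have been chosen precisely to force this check through, so after the expansion and grouping described above, every $\theta$-term is matched by its negative and one concludes $\beta(h_1,h_2) = 0$ for all $h_1, h_2 \in H$, completing the inclusion $\im(\partial^1_{RBE}) \subseteq \Ker(\partial^2_{RBE})$.
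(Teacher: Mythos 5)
Your proposal is correct and follows essentially the same route as the paper's own proof: substitute $(\delta^1\theta, -\Phi^1(\theta))$ into $\partial^2_{RBE}$, observe that the first coordinate dies by $\delta^2\delta^1=0$, then expand the three summands of $\beta$ (namely $\partial^1(g)$, the $R_I$-twisted correction, and $\Phi^2(\delta^1\theta)$) and cancel them using the Rota-Baxter identity, the commutation of $R_I$ with the actions, and the $(H,R_H)$-module condition. The paper carries out exactly this term-by-term expansion and likewise concludes the cancellation by invoking the module axiom, so nothing essential separates your argument from theirs.
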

\begin{proof}In order to prove above lemma. It is enough to show that $\partial^2_{RBE}\partial^1_{RBE}=0$.  Let $\theta \in TC^1_{RBE}(H, I)$,  we have $$\partial^1_{RBE}(\theta)= (\delta^1 (\theta),  -\Phi^1(\theta)).
$$
Now on applying $\partial^2_{RBE}$ in the above identity, we have 
$$
\partial^2_{RBE} \big(\partial^1_{RBE}(\theta)\big)= \partial^2_{RBE}\big (\delta^1 (\theta),   -\Phi^1(\theta)\big).$$

First co-ordinate of above tupple is zero by defination of $\partial^2_{RBE} $. Writing second co-ordinate using \eqref{valofb}, we get
$$
 \partial^1(-\Phi^1(\theta))(h_1, h_2)+R_I(\mu_{R_H(h_2)}(\mu_{h_2}(-\Phi^1(h_1))+\Phi^1(h_1))\\
-\Phi^2(\delta^1(\theta))(h_1, h_2))\big).
$$
To see that the second co-ordinate is also zero, we first expand the second co-ordinate term by term.

The first term 
\begin{align}\label{first term}
 \partial^1(-\Phi^1(\theta))(h_1, h_2)= & -\big(\Phi^1(\theta)(h_2)-\Phi^1(\theta)(h_1 \circ_{R_H} h_2)+\mu_{R_H(h_2)}(\Phi^1(\theta)(h_1)\notag\\
 =&-\big(R_I( \mu_{R_H(h_2)}(\theta(h_2))) -\theta(R_H(h_2))-R_I( \mu_{R_H(h_1 \circ_{R_H} h_2)}(\theta(h_1 \circ_{R_H} h_2)))  \notag\\
 & +\theta(R_H(h_1 \circ_{R_H} h_2)) +\mu_{R_H(h_2)}(R_I( \mu_{R_H(h_1)}(\theta(h_1))) -\theta(R_H(h_1))) \big).
\end{align}

The second term
\begin{align}\label{second term}
R_I(\mu_{R_H(h_2)}(\mu_{h_2}(-\Phi(h_1))+\Phi(h_1))=&R_I(\mu_{R_H(h_2)}(\mu_{h_2}(-R_I( \mu_{R_H(h_1)}(\theta(h_1))) +\theta(R_H(h_1)))\notag\\
&+R_I( \mu_{R_H(h_1)}(\theta(h_1))) -\theta(R_H(h_1)),
\end{align}
and the third term
\begin{align}\label{third term}
\Phi^2(\delta^1(\theta))(h_1, h_2)=&  \delta^1(\theta)(R_H(h_1), R_H(h_2))-R_I \big(\mu_{R_H(h_1 \circ h_2)}(\delta^1(\theta)(h_1R_H(h_1), h_2R_H(h_1)^{-1})\notag \\
& +\mu_{h_2 R_H(h_1)^{-1}}(\delta^1(\theta)(h_1, R_H(h_1))) + \delta^1(\theta)(h_2, R_H(h_1)^{-1})-\delta^1(\theta)(R_H(h_1), R_H(h_1)^{-1})\big) \notag\\
= & \theta(R_H(h_2))-\theta(R_H(h_1 \circ_{R_H} h_2))+\mu_{R_H(h_2)}(\theta(R_H(h_1))) -R_I \big(\mu_{R_H(h_1 \circ_{R_H} h_2)}(\theta(h_2R_H(h_1)^{-1})\notag \\
&-\theta(h_1 \circ_{R_H} h_2)+\mu_{h_2R_H(h_1)^{-1}}(\theta(h_1 R_H(h_1)))  +\mu_{h_2 R_H(h_1)^{-1}}(\theta(R_H(h_1))-\theta(h_1 R_H(h_1)) \notag \\
& + \theta( R_H(h_1)^{-1})-\theta(h_2 R_H(h_1)^{-1})+ \mu_{R_H(h_1)^{-1}}(\theta(h_2 ))  -\theta( R_H(h_1)^{-1})\notag \\
& +\mu_{R_H(h_1)^{-1}}(\theta( R_H(h_1)).
\end{align}

Adding \eqref{first term}, \eqref{second term}, \eqref{third term} and using  $I$ is a $(H, R_H)$-module, it follows that  $\partial^2_{RBE} \partial^1_{RBE}=0$.  
\end{proof}

We denote $\Z^1(H,I):=\Ker(\partial^1_{RBE})$  as set of derivations   and $\Z^2(H,I):=\Ker(\partial^2_{RBE})$ as set of $2$-cocycles  from $(H, R_H)$ to $(I, R_I)$. The set $ \im (\partial^1_{RBE})$ will be called as set of $2$-coboubdaries from $H$ to $I$ and  denoted by $\B^2_{RBE}(H, I)$ More precisely

\begin{align*}
\Z_{RBE}^1(H, I)=\big\{\substack{ \lambda \hspace{1mm} \in \hspace{1mm} TC^1_{RBE} \hspace{2mm}\big| \hspace{2mm}  \lambda(h_1 h_2)= \lambda(h_2)+\mu_{h_2}(\lambda(h_1)), \lambda(R_H(h))=R_I(\mu_{R_H(h)}(\lambda(h)))\\
\forall \hspace{1mm} h_1,\hspace{1mm} h_2, \hspace{1mm}h \in \hspace{1mm}H.}   \big\}
\end{align*}
and
\begin{align*}
\Z_{RBE}^2(H, I)=\Bigg\{\substack{ (\tau, g) \hspace{1mm} \in \hspace{1mm} TC^2_{RBE} \hspace{2mm} \big| \hspace{2mm}
  \tau(h_1, h_2  h_3)+ \tau(h_2, h_3) -\tau(h_1  h_2, h_3) -\mu_{h_3}(\tau(h_1, h_2))=0,\\
  \partial^1(g)(h_1, h_2) -R_I \big(\mu_{ R_H(h_2)}(\mu_{h_2}(g(h_1))- g(h_1)\big)- \Phi^2(\tau)(h_1, h_2)=0}   \Bigg\}.
\end{align*}

We define  the second cohomology group by  $$\Ho^2_{RBE}(H, I):=\Z_{RBE}^2(H, I)/  \B^2_{RBE}(H, I)$$

 \section{Abelian Extensions of Rota-Baxter groups}
In this section, we study  abelian extensions of Rota-Baxter groups and find its connection with the second cohomology group.
 
Let $\mathcal{E} := 0 \to I \stackrel{i}{\to}  E \stackrel{\pi}{\to} H \to 0$ be a Rota-Baxter extension of   $(H, R_H)$ by $(I, R_I)$, where the  Rota-Baxter operator on $E$ is denoted by $R_E$   and $I$ is an abelian group.  Let $s : (H \rightarrow E$ be any st-section of $\mathcal{E}$.   Let $\nu:E \rightarrow \Aut(I)$, $\mu :H \rightarrow \Aut(I)$ and $\sigma :(H, \circ_{R_H}) \rightarrow \Aut(I, \circ_{R_I})$ be the maps defined by
\begin{align}\label{actions}
\nu_g(y)=& g^{-1} y g,\notag\\
\mu_h(y)= & s(h)^{-1}y s(h),\\
\sigma_h (y)= & s(h)^\prime \circ_{R_E} y \circ_{R_E} s(h),\notag
\end{align}
where $a^{\prime}$ denotes the inverse of $a$ with respect to $`\circ_{R_E}$'.  Note that $\nu$, $ \mu$ and $\sigma$ are anti-homomorphisms and  $ \mu$, $\sigma$ independent of the choice of an st-section, respectively. For any $a \in E$, there exists unique  $h \in H $ and $y \in I$ such that $a=s(h)y$.  Note that
 \begin{align}\label{RB oper ab}
 R_E(s(h)y)=&R_E(s(h)\nu^{-1}_{R_E(s(h))} (\nu_{R_E(s(h))}(y)))\notag\\
=& R_E(s(h)) R_E(\nu_{R_E(s(h))}(y)\notag\\
=& R_E(s(h)) R_I(\nu_{R_E(s(h))}(y).
\end{align}
We have $R_E(s(h))=s(\tilde{h})y_h$
 for some unique  $\tilde{h} \in H$ and $y_h \in I$.  By applying $\pi$  on the both sides of $R_E(s(h))=s(\tilde{h})y_h$, and using $\pi R_E=R_H \pi$, we have
\begin{align}
R_H(\pi(s(h)))=& \tilde{h}.
\end{align}
Hence $ R_H(h)=\tilde{h}$. Now using the value of $R_E(s(h))$ in \eqref{RB oper ab}, we get
\begin{equation}\label{RB reduced}
R_E(s(h)y)= s(R_H(h))y_h R_I(\mu_{R_H(h)}(y)).
\end{equation}
Next consider the maps $\tau, \tilde{\tau}: H \times H \rightarrow I$ given by
\begin{align} \label{cocycle 1}
\tau(h_1, h_2)=&s(h_1 h_2)^{-1}s(h_1)s(h_2) \notag,\\
 \tilde{\tau}(h_1, h_2)=&s(h_1 \circ_{R_H} h_2)^{\prime} \circ_{R_E} s(h_1) \circ_{R_E} s(h_2).
 \end{align}
We have 
\vspace{-.2cm}
\begin{align}\label{big}
R_E&  (s(h_1)y_1)  R_E(s(h_2)y_2)\notag\\
=& R_E(s(h_1)y_1 R_E(s(h_1)y_1)s(h_2)y_2 R_E(s(h_1)y_1)^{-1}) \notag \\
=&R_E(s(h_1)y_1 s(R_H(h_1))y_{h_1}R_I(\mu_{R_H(h_1)}(y_1))s(h_2)y_2R_I(\mu_{R_H(h_1)}(y_1))^{-1}y^{-1}_{h_1}  s(R_H(h_1))^{-1}) \notag \\
=&R_E(s(h_1)s(R_H(h_1)) \mu_{R_H(h_1)}(y_1)y_{h_1}R_I(\mu_{R_H(h_1)}(y_1)) s(h_2)y_2 R_I(\mu_{R_H(h_1)}(y_1))^{-1}\notag \\
& y^{-1}_{h_1} s(R_H(h_1)^{-1})   \tau(R_H(h_1), R_H(h_1)^{-1})^{-1}) \notag \\
=& R_E(s(h_1)s(R_H(h_1)) \mu_{R_H(h_1)}(y_1)y_{h_1}R_I(\mu_{R_H(h_1)}(y_1)) s(h_2)s(R_H(h_1)^{-1})\notag \\
&  \mu_{R_H(h_1)^{-1}}( y_2 R_I(\mu_{R_H(h_1)}(y_1))^{-1}y^{-1}_{h_1}) \tau(R_H(h_1), R_H(h_1)^{-1})^{-1}) \notag \\
= & R_E(s(h_1 R_H(h_1)) \tau(h_1, R_H(h_1)) \mu_{R_H(h_1)}(y_1)y_{h_1}R_I(\mu_{R_H(h_1)}(y_1)) s(h_2 R_H(h_1)^{-1})\notag \\ & \tau (h_2, R_H(h_1)^{-1}) \mu_{R_H(h_1)^{-1}}( y_2 R_I(\mu_{R_H(h_1)}(y_1))^{-1}y^{-1}_{h_1}) \tau(R_H(h_1), R_H(h_1)^{-1})^{-1}) \notag \\
=& R_E(s(h_1 R_H(h_1)) s(h_2 R_H(h_1)^{-1}) \mu_{ h_2 R_H(h_1)^{-1}}(\tau(h_1, R_H(h_1))\mu_{R_H(h_1)}(y_1)y_{h_1}\notag\\
& R_I(\mu_{R_H(h_1)}(y_1)))  \tau (h_2, R_H(h_1)^{-1}) \mu_{R_H(h_1)^{-1}}( y_2 R_I(\mu_{R_H(h_1)}(y_1))^{-1}y^{-1}_{h_1}) \tau(R_H(h_1), R_H(h_1)^{-1})^{-1}) \notag \\
=& R_E(s(h_1 R_H(h_1)h_2 R_H(h_1)^{-1}) \tau(h_1 R_H(h_1), h_2 R_H(h_1)^{-1})  \mu_{ h_2 R_H(h_1)^{-1}}(\tau(h_1, R_H(h_1)) \notag \\
& \mu_{R_H(h_1)}(y_1)y_{h_1}R_I(\mu_{R_H(h_1)}(y_1))) \tau (h_2, R_H(h_1)^{-1}) \mu_{R_H(h_1)^{-1}}( y_2 R_I(\mu_{R_H(h_1)}(y_1))^{-1}y^{-1}_{h_1})  \tau(R_H(h_1), R_H(h_1)^{-1})^{-1}) \notag \\
=& s(R_H(h_1)R_H(h_2))y_{h_1 \circ_{R_H} h_2}R_I(\mu_{R_H(h_1 \circ_{R_H} h_2)}(\tau(h_1 R_H(h_1), h_2 R_H(h_1)^{-1}) \notag \\
& \mu_{ h_2 R_H(h_1)^{-1}}(\tau(h_1, R_H(h_1)) \mu_{R_H(h_1)}(y_1)y_{h_1}R_I(\mu_{R_H(h_1)}(y_1))) \tau (h_2, R_H(h_1)^{-1})\notag\\
&    \mu_{R_H(h_1)^{-1}}( y_2 R_I(\mu_{R_H(h_1)}(y_1))^{-1}y^{-1}_{h_1})\tau(R_H(h_1), R_H(h_1)^{-1})^{-1}).  
\end{align}
On the other hand, we have
\begin{align}\label{small}
R_E&  (s(h_1)y_1) R_E(s(h_2)y_2)\notag \\
=& s(R_H(h_1)) y_{h_1} R_I(\mu_{R_H(h_1)}(y_1)) s(R_H(h_2)) y_{h_2} R_I(\mu_{R_H(h_2)}(y_2)) \notag \\
=&s(R_H(h_1)) s(R_H(h_2)) \mu_{R_H(h_2)}(y_{h_1} R_I(\mu_{R_H(h_1)}(y_1))) y_{h_2} R_I(\mu_{R_H(h_2)}(y_2)) \notag \\
=& s(R_H(h_1) R_H(h_2)) \tau(R_H(h_1), R_H(h_2)) \mu_{R_H(h_2)}(y_{h_1} R_I(\mu_{R_H(h_1)}(y_1))) y_{h_2}  R_I(\mu_{R_H(h_2)}(y_2)). 
\end{align}
Now by comparing \eqref{big} and \eqref{small}, we get
\begin{align}\label{2-cocycle condn}
 \tau& (R_H(h_1), R_H(h_2)) \mu_{R_H(h_2)}(y_{h_1} R_I(\mu_{R_H(h_1)}(y_1))) y_{h_2}\notag\\
= & \hspace{1mm} y_{h_1 \circ_{R_H} h_2}R_I(\mu_{R_H(h_1 \circ_{R_H} h_2)}(\tau(h_1 R_H(h_1), h_2 R_H(h_1)^{-1}) \mu_{ h_2 R_H(h_1)^{-1}}(\tau(h_1, R_H(h_1)) \notag \\
&  \mu_{R_H(h_1)}(y_1)y_{h_1}R_I(\mu_{R_H(h_1)}(y_1)))  \tau (h_2, R_H(h_1)^{-1})\notag \\
&  \mu_{R_H(h_1)^{-1}}( R_I(\mu_{R_H(h_1)}(y_1))^{-1}y^{-1}_{h_1})\tau(R_H(h_1), R_H(h_1)^{-1})^{-1}). 
\end{align}
Now onwards,  we write elements of $I$ additively and putting $y_1=0$ in \eqref{2-cocycle condn}, we get
\begin{align}\label{cocycle condn}
 \mu& _{R_H((h_1)}(y_{h_1})-y_{h_1 \circ_{R_H} h_2} +y_{h_2} -R_I \big(\mu_{ h_1 \circ_{R_H} h_2 R_H(h)^{-1}}(\mu_{h_2}(y_{h_1})- y_{h_1}\big)\notag \\
 =& \tau(R_H(h_1), R_H(h_2))-R_I \big( \mu_{h_1 \circ_{R_H} h_2}(\tau(h_1R_H(h_1), h_2R_H(h_1)^{-1})\notag \\
 &+\mu_{h_2 R_H(h_1)^{-1}}(\tau(h_1, R_H(h_1))
  + \tau(h_2, R_H(h_1)^{-1})-\tau(R_H(h_1), R_H(h_1)^{-1})\big).
\end{align}
By \eqref{2-cocycle condn} and  \eqref{cocycle condn},  we observe  that   $\mu$ and $R_I$ together satisfy the following compatibility condition :
\begin{align}\label{comp condn}
\mu_{R_H(h_2)}(R_I(\mu_{R_H(h_1)}(y)))=& R_I\big(\mu_{ h_2 R_H( h_2)}(\mu_{R_H(h_1)}(y)+ R_I(\mu_{R_H(h_1)}(y)))\notag \\
&-\mu_{R_H(h_2)}(R_I(\mu_{R_H(h_1)}(y)))\big),
\end{align}
for all $h_1, h_2 \in H$ and $y \in I$. This shows that $(I, R_I)$ is a $(H, R_H)$-module.

In the view of  relation  $R_E(s(h))=s(R_H(h)))y_h$, we define a map $g$ from $H$ to $I$  by 
\begin{align}\label{RB map}
 g(h)=y_h.
\end{align}
Clearly $g$  is  well-defined and \eqref{cocycle condn}  can be rewritten as
\begin{align}\label{new cocycle condn}
 \partial& ^1(g)(h_1, h_2) -R_I \big(\mu_{ h_1 \circ_{R_H} h_2 R_H(h)^{-1}}(\mu_{h_2}(g(h_1))- g(h_1)\big)\notag \\
 =& \tau(R_H(h_1), R_H(h_2))-R_I \big(\mu_{h_1 \circ_{R_H} h_2}(\tau(h_1R_H(h_1), h_2R_H(h_1)^{-1})
 +\mu_{h_2 R_H(h_1)^{-1}}(\tau(h_1, R_H(h_1)))\notag \\
 &  + \tau(h_2, R_H(h_1)^{-1})-\tau(R_H(h_1), R_H(h_1)^{-1})\big).
\end{align}
Note that
\begin{align}\label{circ cocycle}
R_E(s(h_1))R_E(s(h_2)) =& R_E(s(h_1) \circ_{R_E} s(h_2))\notag\\
=&R_E\big(s(h_1 \circ_{R_H} h_2) \circ_{R_E} \tilde{\tau}(h_1, h_2)\big)\notag \\
=&s(R_H(h_1 \circ_{R_H} h_2)) g(h_1 \circ_{R_H} h_2) R_I(\tilde{\tau}(h_1, h_2)).
\end{align} 
Comparing \eqref{small}  and \eqref{circ cocycle} with $y_1=y_2=0$, we have the following relation among   $\tilde{\tau}$, $\tau$ and $g$, 
\begin{align}\label{mixed}
 \partial^1(g)(h_1, h_2)=&R_I( \tilde{\tau}(h_1, h_2))- \tau(R_H(h_1), R_H(h_2)).
 \end{align}
This shows  that  $(\tau, g, \tilde{\tau}) \in \Ker(\partial^2_{RB})$.

 Let $s_1$ and $s_2$ be two st-sections of $\mathcal{E}$.  We have $s_2(h)=s_1(h)z_h$ for some unique $z_h \in I$.  Define $\theta: H \rightarrow I$ by $\theta(h):=z_h$. Clearly, $\theta$ is a well-defined map. Let $\tau_1, \tau_2$ be the 2-cocycles corresponding to $s_1$ and $s_2$,  respectively.  The following relation is well-known 
\begin{align} \label{cocycle relation}
-\theta(h_1  h_2) + \tau_1(h_1, h_2)+ {\mu}_{h_2}(\theta(h_1)) + \theta(h_2)=\tau_2(h_1, h_2).
\end{align}
Let $R_{E}(s_1(h))=s_1(R_H(h)) \prescript{}{1}{y}_{h}$ and $R_{E}(s_2(h))=s_2(R_H(h)) \prescript{}{2}{y}_{h}$. Then  we have
\begin{align*}
 s_1(R_H(h)) \theta(R_H(h)) \prescript{}{2}{y}_{h}= R_E(s_2(h))= R_E(s_1(h)\theta(h))= s_1(R_H(h))\prescript{}{1}{y}_{h} R_I(\mu_{R_H(h)}(\theta(h))).
\end{align*}
Thus,
\begin{equation}\label{RB cocycle diff}
 \prescript{}{2}{y}_{h} - \prescript{}{1}{y_{h}}=  R_I(\mu_{R_H(h)}(\theta(h))) -\theta(R_H(h)).
\end{equation}
If we define the maps $g_1, g_2 : H \rightarrow I$ corresponding to $s_1$ and $s_2$ by $g_i(h)=\prescript{}{i}{y}_{h}$, then \eqref{RB cocycle diff} can be rewritten as
\begin{align} \label{g condn}
g_2(h)-g_1(h)=R_I(\mu_{R_H(h)}(\theta(h))) -\theta(R_H(h)).
\end{align}
\begin{prop}\label{kernel}
Let $I$ be an abelian group and $$\mathcal{E} := 0 \to I \stackrel{i}{\to}  E \stackrel{\pi}{\to} H \to 0$$ be a Rota-Baxter extension of   $(H, R_H)$ by $(I, R_I)$.  For any st-section $s$, let   $\tau$ and $g$ be the maps as defined in \eqref{cocycle 1} and  \eqref{RB map}, respectively.  Then $(\tau, g) \in  \Ker(\partial^2_{RBE})$. Moreover, the cohomology class of $( \tau, g)$ is independent of the choice of any st-section of $\mathcal{E}$.
\end{prop}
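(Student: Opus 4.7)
The proof has two parts: verifying that $\partial^2_{RBE}(\tau,g)=0$, and showing that changing the st-section alters $(\tau,g)$ by an element of $\B^2_{RBE}(H,I)$.

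Unpacking the definition, $\partial^2_{RBE}(\tau,g)=(\delta^2\tau,\beta)$ with $\beta$ as in \eqref{valofb}. The identity $\delta^2\tau=0$ is the classical $2$-cocycle condition for an ordinary group extension: it follows by comparing the two ways of associating the product $s(h_1)s(h_2)s(h_3)$ in $E$, together with the defining relation \eqref{cocycle 1} for $\tau$. The identity $\beta=0$ is precisely equation \eqref{cocycle condn} (equivalently \eqref{new cocycle condn}), which the text already derives by expanding $R_E(s(h_1)y_1)\cdot R_E(s(h_2)y_2)$ in two different ways: once via the Rota-Baxter axiom, producing \eqref{big}, and once via \eqref{RB reduced}, producing \eqref{small}. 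Comparing the two expansions, specialising at $y_1=0$, and substituting the definition \eqref{RB map} of $g$ yields $\beta(\tau,g)=0$ directly. Thus $(\tau,g)\in\Ker(\partial^2_{RBE})$.

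For the second part, fix two st-sections $s_1,s_2$ and write $s_2(h)=s_1(h)\theta(h)$ for the uniquely determined $\theta\in TC^1_{RBE}(H,I)$; let $(\tau_i,g_i)$ be the pair associated to $s_i$. The standard computation with $s_2(h_1)s_2(h_2)=s_2(h_1h_2)\tau_2(h_1,h_2)$, rewritten in terms of $s_1$, produces \eqref{cocycle relation}, so $\tau_2-\tau_1=\delta^1\theta$. In parallel, applying $R_E$ to the identity $s_2(h)=s_1(h)\theta(h)$ and invoking \eqref{RB reduced} for both sections yields \eqref{g condn}, namely $g_2(h)-g_1(h)=R_I(\mu_{R_H(h)}(\theta(h)))-\theta(R_H(h))$, which (up to the sign appearing in the definition of $\partial^1_{RBE}$) is $\Phi^1(\theta)(h)$. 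Combining the two coordinates, $(\tau_2,g_2)-(\tau_1,g_1)$ lies in $\im(\partial^1_{RBE})=\B^2_{RBE}(H,I)$, so the class of $(\tau,g)$ in $\Ho^2_{RBE}(H,I)$ is independent of the choice of st-section.

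The main obstacle is the verification of $\beta=0$; every other step is a routine substitution into the defining formulas of $\delta^1$, $\delta^2$, and $\Phi^1$. Fortunately, that long calculation has essentially been carried out already in the paragraphs preceding the statement, so the proof amounts to assembling \eqref{big}--\eqref{new cocycle condn} and \eqref{cocycle relation}--\eqref{g condn} into the compact form of the proposition. A useful sanity check during the computation is the compatibility identity \eqref{comp condn}, which is the $(H,R_H)$-module axiom for $(I,R_I)$ and is itself a byproduct of the derivation of $\beta=0$.
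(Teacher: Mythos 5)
Your proposal is correct and follows essentially the same route as the paper: the first part invokes the classical identity $\delta^2\tau=0$ together with \eqref{new cocycle condn} (itself obtained by comparing \eqref{big} and \eqref{small} at $y_1=0$) to conclude $(\tau,g)\in\Ker(\partial^2_{RBE})$, and the second part combines \eqref{cocycle relation} with \eqref{g condn} to show the difference of the two pairs is a coboundary. Even your glossing of the sign mismatch between \eqref{g condn} and the second coordinate $-\Phi^1(\theta)$ of $\partial^1_{RBE}(\theta)$ mirrors the paper, which passes over the same point without comment.
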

\begin{proof}
It follows from  \eqref{new cocycle condn} that the pair $(\tau, g) \in  \Ker(\partial^2_{RBE})$.  Let $s_1$ and $s_2$ be two st-sections of $\mathcal{E}(\tau_1, g_1)$ and $(\tau_2, g_2)$ be the  pairs corresponding to $s_1$ and $s_2$.  Then \eqref{cocycle relation} and \eqref{g condn} together implies that $(\tau_1, g_1)$ and $(\tau_2, g_2)$ are cohomologous. This completes the proof.
\end{proof}

\begin{prop}\label{correspondence}
Let $(I, R_I)$ be an $(H, R_H)$-module via action $
\mu$.  Let $(\tau, g) \in \Ker(\partial^2_{RBE})$  then 
the map $R: H \times I \rightarrow H \times I$ given by 
\begin{align}\label{RB extn}
R(h, y):=\big( R_H(h), g(h)+R_I\big( \mu_{R_H(h)}(y )\big)\big)
\end{align}
defines a Rota-Baxter operator on the group extension $H \times I$ of $H$ by $I$  and $(H \times I, R)$ defines a Rota-Baxter extension of $(H, R_H)$ by $(I, R_I)$, where group operation on $H \times I$ is given by
\begin{align}\label{group extension}
(h_1, y_1)+(h_2, y_2):=\big(h_1h_2,  y_1+\mu_{h_2}(y_2)+\tau(h_1, h_2)\big).
\end{align} 
Moreover, cohomologous $2$-cocycles define equivalent extensions. We denote such extension by $\mathcal{E}(\tau, g)$.
\end{prop}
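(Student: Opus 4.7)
The plan is to verify the proposition in three stages: confirm the group structure on $H\times I$ together with compatibility of $i$ and $\pi$, check the Rota-Baxter identity for $R$, and finally establish equivalence of extensions for cohomologous cocycles.

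First I would observe that the vanishing of the first coordinate of $\partial^2_{RBE}(\tau,g)$ is precisely $\delta^2\tau=0$, the classical non-abelian $2$-cocycle identity. This guarantees that \eqref{group extension} is an associative operation on $H\times I$ with identity $(0,0)$ and the standard $\tau$-twisted formula for inverses, so that $i:I\hookrightarrow H\times I$, $i(y)=(0,y)$, and $\pi:H\times I\twoheadrightarrow H$, $\pi(h,y)=h$, fit into a short exact sequence of groups. The compatibilities $\pi\circ R=R_H\circ\pi$ and $R\circ i=i\circ R_I$ are immediate from the definition of $R$ in \eqref{RB extn}, using $g(0)=0$ (by normalization of cochains) and $\mu_{R_H(0)}=\mathrm{Id}$; hence once $R$ is shown to be Rota-Baxter, $(H\times I,R)$ is automatically a Rota-Baxter extension of $(H,R_H)$ by $(I,R_I)$.

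The main computation is to verify
\[
R(h_1,y_1)+R(h_2,y_2)\;=\;R\bigl((h_1,y_1)+R(h_1,y_1)+(h_2,y_2)+R(h_1,y_1)^{-1}\bigr).
\]
The $H$-coordinate on both sides reduces to $R_H(h_1)R_H(h_2)=R_H(h_1\circ_{R_H}h_2)$ by the Rota-Baxter identity for $R_H$. For the $I$-coordinate, I would expand the argument of $R$ on the right using \eqref{group extension}, apply \eqref{RB extn}, and collect all the $\tau$, $\mu$, and $R_I$ contributions. Setting $y_1=y_2=0$ isolates the part that must match the $\Phi^2(\tau)$ expression: this identification is exactly the vanishing of the $\beta$-component of $\partial^2_{RBE}(\tau,g)$ as spelled out in \eqref{valofb}. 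For general $y_1,y_2$, the remaining $y$-dependent terms reduce, after using that $R_I$ and each $\mu_{R_H(h)}$ are additive homomorphisms, to the compatibility \eqref{comp condn}, which is precisely the $(H,R_H)$-module axiom for $(I,R_I)$. I expect this bookkeeping to be the main obstacle: the conjugation expansion produces many nested occurrences of $\mu_{R_H(h_1)}$, $\mu_{R_H(h_1)^{-1}}$, and $\tau(\cdot,\cdot)$, and reorganizing them into $\Phi^2(\tau)$ while absorbing the $y_i$-dependence via \eqref{comp condn} is delicate but mechanical; the derivation in the excerpt leading up to \eqref{cocycle condn} is essentially the reverse of this computation and can be read backwards to supply the identities needed here.

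For the final statement, suppose $(\tau_2,g_2)-(\tau_1,g_1)=\partial^1_{RBE}(\theta)=(\delta^1\theta,\,-\Phi^1(\theta))$ for some $\theta\in TC^1_{RBE}(H,I)$. I would define $\phi:\mathcal{E}(\tau_1,g_1)\to\mathcal{E}(\tau_2,g_2)$ by $\phi(h,y)=(h,y+\theta(h))$. That $\phi$ is a group isomorphism compatible with $i$ and $\pi$ is the classical cocycle--coboundary computation using $\tau_2-\tau_1=\delta^1\theta$. The Rota-Baxter compatibility $\phi\circ R_1=R_2\circ\phi$ unpacks in the $I$-coordinate to
\[
g_2(h)-g_1(h)\;=\;\theta(R_H(h))-R_I\mu_{R_H(h)}(\theta(h))\;=\;-\Phi^1(\theta)(h),
\]
which is exactly the second coordinate of the coboundary hypothesis, completing the argument.
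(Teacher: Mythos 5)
Your proposal is correct and follows essentially the same route as the paper: invoke classical group-extension theory for the group structure \eqref{group extension}, verify the Rota-Baxter identity for $R$ by expanding both sides and using the vanishing of the $\beta$-component of $\partial^2_{RBE}(\tau,g)$ together with the module compatibility \eqref{comp condn}, and handle cohomologous cocycles via the map $(h,y)\mapsto(h,y+\theta(h))$, whose Rota-Baxter compatibility reduces exactly to $g_2-g_1=-\Phi^1(\theta)$. The only difference is organizational — you separate the $y_1=y_2=0$ case from the general case, whereas the paper runs the computation in one pass — but the ingredients and the logic are identical.
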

\begin{proof} From the theory of group extensions it follows that the operation in  \eqref{group extension} defines a group extension of $H$ by $I$ for details see \cite[Chapter 2]{PSY18}.  Now we verify that the operator $R$ defined in \eqref{RB extn} is a Rota-Baxter operator on $H \times I$. 
\begin{align}\label{RB proof}
R(h_1, y_1) R(h_2, y_2)=& \big( R_H(h_1), g(h_1)+R_I\big( \mu_{R_H(h_1)}(y_1 )\big)\big)\big( R_H(h_2), g(h_2)+R_I\big( \mu_{R_H(h_2)}(y_2 )\big)\big)\notag\\
=& \big( R_H(h_1) R_H(h_2), g(h_1)+R_I\big( \mu_{R_H(h_1)}(y_1 )\big)+\mu_{R_H(h_2)}\big(g(h_2)\notag \\
&+R_I(\mu_{R_H(h_2)}(y_2 ))+ \tau(R_H(h_1),R_H(h_2))\big).
\end{align}
Using the fact that $(\tau, g) \in \Ker(\partial^2_{RBE})$, the expression of  \eqref{RB proof} becomes
\begin{align}\label{RB proof 2}
&\big(R_H(h_1 \circ_{R_H} h_2) ,  R_I\big( \mu_{R_H(h_1)}(y_1 )\big)+g(h_1 \circ_{R_H} h_2) +R_I \big(\mu_{ R_H(h)^{-1} R_H (h_1 \circ_{R_H} h_2)}(\mu_{h_2}g(h_1)- g(h_1)\big)\notag \\
&-R_I(\mu_{R_H (h_1 \circ_{R_H} h_2)}\big(\tau(h_1R_H(h_1), h_2R_H(h_1)^{-1}) + \mu_{h_2 R_H(h_1)^{-1}}(\tau(h_1, R_H(h_1))\notag \\
& + \tau(h_2, R_H(h_1)^{-1})-  \tau(R_H(h_1)^{-1}, R_H(h_1))\big) +R_I\big( \mu_{R_H(h_2)}(y_2 )) \big).
\end{align} 
Using \eqref{comp condn},  \eqref{RB proof 2} become 
\begin{align*}
&=R\Big((h_1, y_1) R(h_1, y_1) (h_2, y_2)  R(h_1, y_1)^{-1} \Big).
\end{align*}
This shows that $R$ defined in \eqref{RB extn} is a Rota-Baxter operator on $H \times I$.  It is easy to verify that $R i= i R_I $ and $R \pi=\pi R_H$,  which shows that $$ \mathcal{E}(\tau, g):=0 \to I \stackrel{i}{\to}  H \times I \stackrel{\pi}{\to} H \to 0$$ is a Rota-Baxter extension, where $i$ and $\pi$ are natural injection and projection, respectively. 

Next, we show that extensions corresponding to cohomologous $2$-cocycles are equivalent. Let $\mathcal{E}(\tau_1, g_1)$ and $\mathcal{E}(\tau_2, g_2)$ be two extensions corresponding to cohomologous $2$-cocycles $(\tau_1, g_1)$ and $(\tau_2, g_2)$, respectively.  By the definition of $\Ho_{RBE}^2(H, I)$ there exists a map $\theta: H \rightarrow I$ such that  
\begin{align*}
\tau_2(h_1, h_2)-\tau_1(h_1, h_2)=\delta^1(\theta)(h_1, h_2)
\end{align*}
and
\begin{align*} 
g_2(h)-g_1(h)=\theta(R_H(h))-R_I(\mu_{R_H(h)}(\theta(h))),
\end{align*}
for all $h_1, h_2, h \in H$.

Define $ \varsigma: \mathcal{E}(\tau_1, g_1) \rightarrow \mathcal{E}(\tau_2, g_2)$ by $ \varsigma(h, y)=(h, y+\theta(h)).$   It follows that the map $ \varsigma$ is an isomorphism of groups.  We will now show that $ \varsigma$ is a morphism of Rota-Baxter groups. Let $R_1$ and $R_2$ be  the Rota-Baxter operator on $\mathcal{E}(\tau_1,  g_1)$ and $\mathcal{E}(\tau_2, g_2)$ defined via \eqref{RB extn}. We have
\begin{align*}
 \varsigma\big( R_1(h, y)\big) &= \varsigma\big( R_H(h), g_1(h)+R_I\big( \mu_{R_H(h)}(y )\big)\big)\\
 &=\big( R_H(h), g_1(h)+R_I\big( \mu_{R_H(h)}(y )\big)+\theta(R_H(h))\big)\\
 &=\big( R_H(h), g_2(h)+R_I\big( \mu_{R_H(h)}(y+\theta(h) )\big)  \big)\\
 &=R_2 \big( \varsigma (h, y) \big).
\end{align*}
This shows that $\varsigma$ is a morphism of Rota-Baxter groups. 
\end{proof}

\begin{thm}\label{bij}
Let $(I, R_I)$ be an $(H, R_H)$-module and $\Ext_{\mu}(H, I)$ be set of equivalence classes of all extensions of   $(H, R_H)$ by $(I, R_I)$ with action  $\mu$. Then, $\Ext_{\mu}(H, I)$ is in bijection with $\Ho^{2}_{RBE}(H, I)$.
\end{thm}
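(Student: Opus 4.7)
The plan is to combine the two preceding propositions into mutually inverse maps between $\Ext_{\mu}(H,I)$ and $\Ho^2_{RBE}(H,I)$. First I would define
$$\Psi : \Ext_{\mu}(H,I) \longrightarrow \Ho^2_{RBE}(H,I), \qquad [\mathcal{E}] \longmapsto [(\tau,g)],$$
where $(\tau,g)$ is the pair produced from any st-section $s$ of $\mathcal{E}$ via \eqref{cocycle 1} and \eqref{RB map}. Proposition \ref{kernel} already shows that $(\tau,g) \in \Ker(\partial^2_{RBE})$ and that its class is independent of the chosen st-section; it remains to verify independence of the equivalence class of $\mathcal{E}$. For this, given an equivalence $\phi : \mathcal{E}_1 \to \mathcal{E}_2$ of Rota-Baxter extensions and an st-section $s_1$ of $\mathcal{E}_1$, the composite $\phi \circ s_1$ is an st-section of $\mathcal{E}_2$; since $\phi$ is the identity on $I$ and intertwines both the group operations and the Rota-Baxter operators, a direct substitution into \eqref{cocycle 1} and \eqref{RB map} shows that $\mathcal{E}_2$ yields exactly the same pair $(\tau,g)$ via $\phi\circ s_1$.

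Next, I would define the inverse direction $\Phi : \Ho^2_{RBE}(H,I) \to \Ext_{\mu}(H,I)$ by $[(\tau,g)] \mapsto [\mathcal{E}(\tau,g)]$, whose well-definedness is precisely the content of Proposition \ref{correspondence}.

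To show $\Psi\circ\Phi = \mathrm{Id}$, I take a representative $(\tau,g)$ and work inside $\mathcal{E}(\tau,g)$ with the canonical st-section $s(h)=(h,0)$. Using the multiplication \eqref{group extension}, a direct computation gives $s(h_1h_2)^{-1}s(h_1)s(h_2) = (e,\tau(h_1,h_2))$, so the recovered $2$-cocycle is $\tau$ itself. Similarly, applying the Rota-Baxter operator \eqref{RB extn} to $s(h)=(h,0)$ produces $(R_H(h), g(h)) = s(R_H(h))\cdot g(h)$, so the recovered map is $g$. Hence $\Psi\Phi[(\tau,g)] = [(\tau,g)]$.

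To show $\Phi\circ\Psi = \mathrm{Id}$, I start with an extension $\mathcal{E}$, pick an st-section $s$, and produce $(\tau,g)$. I then define $\varphi : \mathcal{E}(\tau,g) \to \mathcal{E}$ by $\varphi(h,y) = s(h)\, y$. Standard group-extension theory (see \cite[Chapter 2]{PSY18}) gives that $\varphi$ is a group isomorphism making the diagram of short exact sequences commute. The remaining point is to check that $\varphi$ intertwines the Rota-Baxter operators. On the one hand, the formula \eqref{RB reduced} reads
$$R_E(s(h)\,y) \;=\; s(R_H(h))\, g(h)\, R_I\bigl(\mu_{R_H(h)}(y)\bigr),$$
while on the other hand
$$\varphi\bigl(R(h,y)\bigr) \;=\; \varphi\bigl(R_H(h),\, g(h)+R_I(\mu_{R_H(h)}(y))\bigr) \;=\; s(R_H(h))\,\bigl(g(h)+R_I(\mu_{R_H(h)}(y))\bigr),$$
and these agree because $I$ is abelian and $g(h),\, R_I(\mu_{R_H(h)}(y))\in I$. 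Thus $\varphi$ is an equivalence of Rota-Baxter extensions, so $\Phi\Psi[\mathcal{E}] = [\mathcal{E}]$.

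The main obstacle is bookkeeping rather than a genuine difficulty: one must carefully check that the round-trip calculation $\Phi\circ\Psi$ really produces a Rota-Baxter-equivariant isomorphism (not merely a group isomorphism), which is where the compatibility condition encoded in the $g$-component of $\partial^2_{RBE}$ is used; once the formula \eqref{RB reduced} is in hand this becomes a direct substitution, so the proof reduces to assembling the pieces already proved in Propositions \ref{kernel} and \ref{correspondence} together with the invariance check under equivalence of extensions.
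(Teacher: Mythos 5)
Your proposal is correct and follows essentially the same route as the paper: the same two maps $[\mathcal{E}]\mapsto[(\tau,g)]$ and $[(\tau,g)]\mapsto[\mathcal{E}(\tau,g)]$, well-definedness via Propositions \ref{kernel} and \ref{correspondence}, and the canonical st-section $s(h)=(h,0)$ for one composite. In fact you make explicit two points the paper treats tersely --- the invariance of $(\tau,g)$ under equivalence of extensions (needed for $\Psi$ to be well defined on classes) and the explicit Rota-Baxter isomorphism $\varphi(h,y)=s(h)y$ verifying $\Phi\Psi=\mathrm{Id}$ via \eqref{RB reduced} --- so your write-up is, if anything, more complete.
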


\begin{proof}Let $$\mathcal{E} := 0 \to I \stackrel{i}{\to}  E \stackrel{\pi}{\to} H \to 0$$ be a Rota-Baxter extension of  $(H, R_H)$ by $(I, R_I)$.  Let $s$ be a st-section of $\mathcal{E}$.  Let $\tau$ and $g$ be maps as defined in \eqref{cocycle 1} and  \eqref{RB map}, respectively. Define
\begin{align}
&\psi : \Ext_{\mu}(H, I) \rightarrow \Ho^{2}_{RBE}(H, I) \hspace{.2cm} \mbox{by} \notag\\
&\psi(\mathcal{E}):=[(\tau, g)]
\end{align}
$[(\tau, g)]$ denotes the cohomology class of $(\tau, g)$. The map $\psi$ is well-defined is follows from Proposition \ref{kernel}.  Conversely, define
\begin{align*}
&\omega : \Ho^{2}_{RBE}(H, I) \rightarrow \Ext_{\mu}(H, I) \hspace{.2cm}\mbox{by} \\
&\omega([\tau, g]):= [\mathcal{E}(\tau, g)]
\end{align*}
where $[\mathcal{E}(\tau, g)]$ represents the equivalence class of $\mathcal{E}(\tau, g)$ in $\Ho^{2}_{RBE}(H, I)$.  To show that $\omega$ is well-defined, it is enough to show that $2$-cocycles corresponding to equivalent extensions are cohomologous. Let $E_1$ and $E_2$  be two equivalent Rota-Baxter extensions of $(H,R_H)$ by $(I, R_I)$ then we have
the following commutative diagram
$$\begin{CD}
0 @>>> I @>i>> E_1 @>{{\pi} }>> H @>>> 0\\ 
&& @V{\text{Id}} VV@V{\phi} VV @V{\text{Id} }VV \\
0 @>>> I @>i^\prime>>  E_2  @>{{\pi^\prime} }>> H  @>>> 0
\end{CD}$$
where $\phi: E_1 \rightarrow E_2$ is isomorphism of Rota-Baxter groups.  Let $s_1$ and $s_2$ be st-sections corresponding to the extensions $E_1$ and $E_2$,  respectively.  Let $(\tau_1, g_1)$ and $(\tau_2, g_2)$ be 2-cocycles corresponding to $s_1$ and $s_2$, respectively. Let $s^{\prime}= \phi s_1$ then $s^{\prime}$ is an section of $E_2$. Let $(\tau^\prime, g^\prime)$ be $2$-cocycle corresponding to $s^\prime$ then $(\tau^\prime, g^\prime)=(\tau_1, g_1)$ and by Proposition \ref{kernel}, $(\tau^\prime, g^\prime)$ and $(\tau_2, g_2)$ are cohomologous. Hence $(\tau_1, g_1)$ and $(\tau_2, g_2)$ are cohomologous,  which shows that $\omega$ is well-defined.

Next we prove that $ \psi$ and $ \omega$ are inverse of each other.  In order to show that $\psi \omega= Id_{ \Ho^{2}_{RBE}(H, I)}$, we will prove that $(\tau, g)$ is cohomologous to a 2-cocycle  $(\tau^\prime, g^\prime)$ corresponding to some st-section of $\mathcal{E}(\tau, g).$ Let $s: H \rightarrow \mathcal{E}(\tau, g)$ given by $s(h)=(h, 0)$. Clearly, $s$ is a st-section of $\mathcal{E}(\tau, g)$.  It can be easily seen that  the $2$-cocycle corresponding to st-section $s$ is cohomologous to $(\tau, g)$.  On the other side $\omega \psi= \Id_{\Ext_{\mu}(H, I)}$ follows from proposition \ref{correspondence}. This completes the proof.
\hfill   $\Box$

\end{proof}

\begin{remark}
 Let $(I, R_I)$ be a $(H, R_H)$-module.  We know that every element of $ \Ho^{2}_{RBE}(H, I)$ defines a Rota-Baxter extension and every Rota-Baxter extension is extension of induced skew left braces.  Let $ \Ho^{2}_{N}(H, I)$ be the second cohomology group of skew left brace   $H$ with cofficients in $I$ (defined in \cite{NMY1}). There is a natural embedding $ \Ho^{2}_{RBE}(H, I)$ into $ \Ho^{2}_{N}(H, I)$. More precisely, for $[(\tau, g)] \in \Ho^{2}_{RBE}(H, I) $ construct an extension and corresponding to that extension define $\tau$ and $\tilde{\tau}$ as defined in  \eqref{cocycle 1}, then  $[(\tau, g)] \rightarrow [(\tau, \tilde{\tau})]$ is the required embedding.
\end{remark}

\begin{remark}
If we consider $\mu_h=Id_H$ for all $h \in H$ $($in case of centeral extensions$)$. then the maps $\partial^1_{RBE}$ and $\partial^2_{RBE}$ could be simplified as $
\partial^1_{RBE}(\theta):= (\delta^1 (\theta),  -\Phi^1(\theta)),$ 
where $\Phi^1 (\theta(h))=R_I(\theta(h)) -\theta(R_H(h))$
and $\partial^2_{RBE}(f,  g):=(\delta^2f ,\beta).$ In this case  $\beta$  and $\Phi^2(f)$ is given by
\begin{align*}
\beta(h_1, h_2) =&  \partial^1(g)(h_1, h_2)- \Phi^2(f)(h_1, h_2),\notag\\
\Phi^2(f)(h_1, h_2)=& f(R_H(h_1), R_H(h_2))-R_I \big(f(h_1R_H(h_1), h_2R_H(h_1)^{-1})
+f(h_1, R_H(h_1))\\
& + f(h_2, R_H(h_1)^{-1})-f(R_H(h_1), R_H(h_1)^{-1})\big).
\end{align*}
We also have the following commutative diagram
\[ \begin{tikzcd}
C^1 \arrow{r}{\Phi^1} \arrow[swap]{d}{\delta^1} & C^1 \arrow{d}{\partial^1} \\%
C^2 \arrow{r}{\Phi^2}& C^2
\end{tikzcd}
\]

that is $\partial^1 \Phi^1=\Phi^2 \delta^1$.
\end{remark}
\noindent \textbf{Problem 1.} Is it possible to generalize the maps $\Phi^1$ and $\Phi^2$  such that $\partial^n \Phi^n=\Phi^{n+1} \delta^n$ for $n \geq 3$?

\section{General  Extensions of Rota-Baxter groups}
In this section, we examine general extensions of Rota-Baxter groups.  We also define an action of abelian extensions on non-abelian extensions.

 Let $\mathcal{E} := 0 \to I \stackrel{}{\to}  E \stackrel{\pi}{\to} H \to 0$ be a Rota-Baxter extension of $(H, R_H)$ by $(I, R_I)$.  Let $s : H \rightarrow E$ be a st-section of $\mathcal{E}$.  Let $\nu, \mu, \sigma$ be as in \eqref{actions}.
Note that $\nu$ is an anti-homomorphism but $\mu$ and $\sigma$ in general need not be anti-homomorphisms. However they satisfy the following identity
\begin{align}
\mu_{h_1 h_2} =&i_{\tau(h_1, h_2){-1}} \mu_{h_2} \mu_{h_1},\notag\\
\sigma _{h_1 \circ_{R_H} h_2} =&i^{\circ}_{\tilde{\tau}(h_1, h_2)^{-1}} \sigma_{h_2} \sigma_{h_1},
\end{align}
where $i_x$ and $i^{\circ}_x$ are inner automorphisms of $I$ and $(I, \circ_{R_I})$, respectively, for  $x \in I$.

We observed that $R_E(s(h))=s(R_H(h))y_h$ for some unique $y_h \in I$ and  for any $a \in E$ there exists unique  $h \in H $ and $y \in I$ such that $a=s(h)y$.  Note that
 \begin{align}\label{RB oper}
 R_E(s(h)y)=&R_E(s(h)\nu_{R_E(s(h))^{-1}} (\nu_{R_E(s(h))}(y)))\notag\\
=& R_E(s(h)) R_E(\nu_{R_E(s(h))}(y)\notag\\
=& R_E(s(h)) R_I(\nu_{(s(R_H(h))y_h)}(y)\notag\\
=&s( R_H(h)) y_h R_I(i_{y^{-1}_h}(\mu_{R_H(h)}(y)))).
\end{align}
Using that $R_E$ is a Rota-Baxter operator on $E$, we have
\begin{align*}
& R_E  (s(h_1)y_1)  R_E(s(h_2)y_2)\notag\\
=& R_E(s(h_1)y_1 R_E(s(h_1)y_1)s(h_2)y_2 R_E(s(h_1)y_1)^{-1}) \notag \\
=&R_E\big(s(h_1)y_1 s(R_H(h_1))y_{h_1}R_I(i_{y^{-1}_{h_1}}(\mu_{R_H(h_1)}(y_1)))s(h_2)y_2 R_I(i_{y^{-1}_{h_1}}(\mu_{R_H(h_1)}(y_1)))^{-1}\notag \\
& y^{-1}_{h_1}s(R_H(h_1)^{-1}) \tau(R_H(h_1), R_H(h_1)^{-1})^{-1} \big)  \notag \\
=&R_E\big(s(h_1)s(R_H(h_1)) \mu_{R_H(h_1)}(y_1) y_{h_1}R_I(i_{y^{-1}_{h_1}}(\mu_{R_H(h_1)}(y_1)))s(h_2)s(R_H(h_1)^{-1})\notag \\
& \mu_{R_H(h_1)^{-1}} ( y_2 R_I(i_{y^{-1}_{h_1}}(\mu_{R_H(h_1)}(y_1)))^{-1}  y^{-1}_{h_1}) \tau(R_H(h_1), R_H(h_1)^{-1})^{-1} \big)\notag \\
=& R_E\big(s(h_1 R_H(h_1)) \tau(h_1, R_H(h_1)) \mu_{R_H(h_1)}(y_1) y_{h_1}R_I(i_{y^{-1}_{h_1}}(\mu_{R_H(h_1)}(y_1)))s(h_2 R_H(h_1)^{-1})\notag \\
& \tau(h_2, R_H(h_1)^{-1}) \mu_{R_H(h_1)^{-1}} ( y_2 R_I(i_{y^{-1}_{h_1}}(\mu_{R_H(h_1)}(y_1)))^{-1}  y^{-1}_{h_1}) \tau(R_H(h_1), R_H(h_1)^{-1})^{-1} \big)\notag \\
=& R_E\big(s(h_1 R_H(h_1)) s(h_2 R_H(h_1)^{-1}) \mu_{h_2 R_H(h_1)^{-1}} \big(\tau(h_1, R_H(h_1)) \mu_{R_H(h_1)}(y_1) y_{h_1}\notag \\
& R_I(i_{y^{-1}_{h_1}}(\mu_{R_H(h_1)}(y_1)))\big)\tau(h_2, R_H(h_1)^{-1}) \mu_{R_H(h_1)^{-1}} ( y_2 R_I(i_{y^{-1}_{h_1}}(\mu_{R_H(h_1)}(y_1)))^{-1}  y^{-1}_{h_1})\notag \\
& \tau(R_H(h_1), R_H(h_1)^{-1})^{-1} \big)\notag \\
\end{align*}
\begin{align}\label{ NA big}
=& R_E\Big(s(h_1 R_H(h_1) h_2 R_H(h_1)^{-1}) \tau(h_1 R_H(h_1), h_2 R_H(h_1)^{-1}) \mu_{h_2 R_H(h_1)^{-1}} \big(\tau(h_1, R_H(h_1))\notag \\
& \mu_{R_H(h_1)}(y_1) y_{h_1} R_I(i_{y^{-1}_{h_1}}(\mu_{R_H(h_1)}(y_1)))\big)  \tau(h_2, R_H(h_1)^{-1})  \mu_{R_H(h_1)^{-1}}\big ( y_2 R_I(i_{y^{-1}_{h_1}}\notag \\
& (\mu_{R_H(h_1)}(y_1)))^{-1}  y^{-1}_{h_1} \big)  \tau(R_H(h_1), R_H(h_1)^{-1})^{-1} \Big)\notag \\
=& s(R_H(h_1) R_H(h_2)) y_{h_1 \circ_{R_H} h_2} R_I \Big( i^{-1}_{y_{h_1 \circ h_2}} \big( \mu_{R_H(h_1) R_H(h_2)}(\tau(h_1 R_H(h_1), h_2 R_H(h_1)^{-1})\notag \\
& \mu_{h_2 R_H(h_1)^{-1}} \big(\tau(h_1, R_H(h_1)) \mu_{R_H(h_1)}(y_1) y_{h_1} R_I(i_{y^{-1}_{h_1}}(\mu_{R_H(h_1)}(y_1)))\big)\tau(h_2, R_H(h_1)^{-1})\notag \\
& \mu_{R_H(h_1)^{-1}}\big ( y_2 R_I(i_{y^{-1}_{h_1}}(\mu_{R_H(h_1)}(y_1)))^{-1}  y^{-1}_{h_1} \big) \tau(R_H(h_1), R_H(h_1)^{-1})^{-1} \big) \Big).
\end{align}
We also have another method to expand this expression
\begin{align}\label{NA small}
& R_E  (s(h_1)y_1) R_E(s(h_2)y_2)\notag \\
=& s(R_H(h_1)) y_{h_1} R_I(i_{y^{-1}_{h_1}}(\mu_{R_H(h_1)}(y_1))) s(R_H(h_2)) y_{h_2} R_I(i_{y^{-1}_{h_2}}(\mu_{R_H(h_2)}(y_2))) \notag \\
=&s(R_H(h_1)) s(R_H(h_2)) \mu_{R_H(h_2)}\big(y_{h_1} R_I(i_{y^{-1}_{h_1}}(\mu_{R_H(h_1)}(y_1)))\big) y_{h_2} R_I(i_{y^{-1}_{h_2}}(\mu_{R_H(h_2)}(y_2))) \notag \\
=& s(R_H(h_1) R_H(h_2)) \tau(R_H(h_1), R_H(h_2))  \mu_{R_H(h_2)}\big(y_{h_1} R_I(i_{y^{-1}_{h_1}}(\mu_{R_H(h_1)}(y_1)))\big) y_{h_2} \notag \\
& R_I(i_{y^{-1}_{h_2}}(\mu_{R_H(h_2)}(y_2))). 
\end{align}
By comparing \eqref{ NA big} and \eqref{NA small}, we get
\begin{align}\label{NA parent}
& y_{h_1 \circ_{R_H} h_2}R_I \Big( i^{-1}_{y_{h_1 \circ_{R_H} h_2}} \big( \mu_{R_H(h_1) R_H(h_2)}(\tau(h_1 R_H(h_1), h_2 R_H(h_1)^{-1}) \mu_{h_2 R_H(h_1)^{-1}} \big(\tau(h_1, R_H(h_1))\notag \\
& \mu_{R_H(h_1)}(y_1) y_{h_1} R_I(i_{y^{-1}_{h_1}}(\mu_{R_H(h_1)}(y_1)))\big)\tau(h_2, R_H(h_1)^{-1})\notag \\
& \mu_{R_H(h_1)^{-1}}\big ( y_2 R_I(i_{y^{-1}_{h_1}}(\mu_{R_H(h_1)}(y_1)))^{-1}  y^{-1}_{h_1} \big) \tau(R_H(h_1), R_H(h_1)^{-1})^{-1} \big) \Big)=A.
\end{align}
where $A$ is given by 
\begin{align*}
A=& \tau(R_H(h_1), R_H(h_2))  \mu_{R_H(h_2)}\big(y_{h_1} R_I(i_{y^{-1}_{h_1}}(\mu_{R_H(h_1)}(y_1)))\big) y_{h_2} R_I(i_{y^{-1}_{h_2}}(\mu_{R_H(h_2)}(y_2))). 
\end{align*}
Also, we have
\begin{align}\label{circ condn}
 R_E(s(h_1))R_E(s(h_2))= & R_E(s(h_1) \circ_{R_E} s(h_2))\notag \\
= & R_E(s(h_1 \circ_{R_H} h_2) \circ_{R_I} \tilde{\tau}(h_1,  h_2))\notag\\
=& R_E(s(h_1 \circ_{R_H} h_2)) R_E(\tilde{\tau}(h_1, h_2))\notag\\
=& s(R_H(h_1) R_H(h_2)) y_{h_1 \circ_{R_H} h_2} R_I(\tilde{\tau}(h_1, h_2)).
\end{align}
By putting $y_1=y_2=0$ in \eqref{ NA big} and comparing with \eqref{circ condn},  we get
\begin{align}\label{circ to old}
R_I(\tilde{\tau}(h_1, h_2)) =& R_I \Big( i^{-1}_{y_{h_1 \circ_{R_H} h_2}} \big( \mu_{R_H(h_1) R_H(h_2)}(\tau(h_1 R_H(h_1), h_2 R_H(h_1)^{-1})\notag \\
& \mu_{h_2 R_H(h_1)^{-1}} \big(\tau(h_1, R_H(h_1)) y_{h_1} \big)\tau(h_2, R_H(h_1)^{-1})\notag \\
& \mu_{R_H(h_1)^{-1}}\big ( y^{-1}_{h_1} \big) \tau(R_H(h_1), R_H(h_1)^{-1})^{-1} \big) \Big).
\end{align}
We have a well-defined map $g: H \rightarrow I$ given by 
\begin{align}\label{g map}
g(h)=y_h
\end{align}
and \eqref{NA parent} and \eqref{circ to old} can be rewritten as
\begin{align}\label{NA parent 2}
& g(h_1 \circ_{R_H} h_2)R_I \Big( i^{-1}_{g(h_1 \circ_{R_H} h_2)} \big( \mu_{R_H(h_1) R_H(h_2)}(\tau(h_1 R_H(h_1), h_2 R_H(h_1)^{-1}) \mu_{h_2 R_H(h_1)^{-1}} \big(\tau(h_1, R_H(h_1))\notag \\
& \mu_{R_H(h_1)}(y_1) g(h_1) R_I(i_{g(h_1)^{-1}}(\mu_{R_H(h_1)}(y_1)))\big)\tau(h_2, R_H(h_1)^{-1})\notag \\
& \mu_{R_H(h_1)^{-1}}\big ( y_2 R_I(i_{g(h_1)^{-1}}(\mu_{R_H(h_1)}(y_1)))^{-1} g(h_1)^{-1} \big) \tau(R_H(h_1), R_H(h_1)^{-1})^{-1} \big) \Big)=A,
\end{align}
 where $A$ is given by
\begin{align*}
A=& \tau(R_H(h_1), R_H(h_2))  \mu_{R_H(h_2)}\big(g(h_1) R_I(i_{g(h_1)^{-1}}(\mu_{R_H(h_1)}(y_1)))\big) g(h_2)  R_I(i_{g(h_2)^{-1}}(\mu_{R_H(h_2)}(y_2))),
\end{align*}
and 
\begin{align}\label{coho re}
R_I(\tilde{\tau}(h_1, h_2))=& R_I \Big( i^{-1}_{g(h_1 \circ_{R_H} h_2)} \big( \mu_{R_H(h_1) R_H(h_2)}(\tau(h_1 R_H(h_1), h_2 R_H(h_1)^{-1})\notag \\
& \mu_{h_2 R_H(h_1)^{-1}} \big(\tau(h_1, R_H(h_1)) g(h_1) \big)\tau(h_2, R_H(h_1)^{-1})\notag \\
& \mu_{R_H(h_1)^{-1}}\big (g(h_1)^{-1} \big) \tau(R_H(h_1), R_H(h_1)^{-1})^{-1} \big) \Big),
\end{align}
respectively.
It follows from calculations that  $\tau$ satisfy 
\begin{equation}\label{cocycle 2}
\tau(h_1, h_2  h_3) \tau(h_2, h_3) \tau(h_1  h_2, h_3)^{-1}  (\mu_{h_3}(\tau(h_1, h_2)))^{-1}=0.
\end{equation}

 If we have a Rota-Baxter extension $\mathcal{E} := 0 \to I \stackrel{}{\to}  E \stackrel{\pi}{\to} H \to 0$ and a st-section $s$ then we can construct a triplet $(\mu, \tau, g)$ corresponding to $s$ as described in \eqref{actions},  \eqref{cocycle 1}
and \eqref{g map}. 

Next we see that how the triplets corresponding to different sections are related. Let $s_1$ and $s_2$ be two different st-sections of $\mathcal{E}$.  We have $s_2(h)=s_1(h)z_h$ and let $R_E(s_i(h)) =s_i(R_H(h)\prescript{}{i}{y}_{h}$ for some unique $z_h, \prescript{}{1}{y}_{h} $ and $\prescript{}{2}{y}_{h} \in I$.  Define the maps $g_i$ and $\theta $ from $H$ to $I$ by $g_i(h)=\prescript{}{i}{y}_{h}$ and  $\theta: H \rightarrow I$ by $\theta(h):=z_h$.  Let $\prescript{}{i}{\mu}$ be action of $H$ on $I$ corresponding to $s_i$ as defined in \eqref{actions}. Then we have 
 \begin{align}\label{st action }
 \prescript{}{2}{\mu}_h &= i_{\theta(h)^{-1}} \prescript{}{1}{\mu}_h.
\end{align}  
 Let $\tau_1, \tau_2$ be 2-cocycles corresponding to $s_1$ and $s_2$,  respectively.  The following relation is well known from theory of group extensions
\begin{align} \label{NA cocycle relation}
\theta(h_1  h_2)^{-1}  \tau_1(h_1, h_2) \prescript{}{1}{\mu}_{h_2}
(\theta(h_1))  \theta(h_2)=\tau_2(h_1, h_2).
\end{align}
We see that how $g_1$ and $g_2$ are related
\begin{align}\label{first g}
	R_E(s_2(h)) = s_2(R_H(h)) g_2(h) =s_1(R_H(h)) \theta(R_H(h)) g_2(h).
\end{align}
We also have,
\begin{align}\label{second g}
R_E(s_2(h)) = R_E(s_1(h) \theta(h))=s_1(R_H(h)) g_1(h) R_I\big(i_{g_{1}(h)^{-1}}(\prescript{}{1}{\mu}_{R_H(h)}(\theta(h)))\big).
\end{align}

By comparing \eqref{first g} and \eqref{second g}, we have
\begin{align}\label{NA g condn}
	\theta(R_H(h)) g_2(h)  &=g_1(h) R_I\big(i_{g_{1}(h)^{-1}}(\prescript{}{1}{\mu}_{R_H(h)}(\theta(h)))).
\end{align}

As we mentioned that $\mu$ defined in \eqref{actions}  is, in general, not be a homomorphism.   However, clearly $\mu$ composed with the natural projection
$$\Aut(I) \rightarrow \Out(I):=\Aut(I)/ \Inn(I)$$ 
is a homomorphism, denoted by 
$\bar{\mu} : H \rightarrow \Out(I)$.

Now, we state a well-known theorem of group extensions,  which stands true for Rota-Baxter extensions of groups.
\begin{thm}
The homomorphism $\bar{\mu}: H \rightarrow \Out(I)$ is independent of choice of any st-section $s : H \rightarrow E$. Furthermore, if $\prescript{}{1}{\bar{\mu}}$, $\prescript{}{2}{\bar{\mu}}$ are the homomorphisms associated with two equivalent Rota-Baxter extensions of $(H, R_H)$ by $(I, R_I)$,  then $\prescript{}{1}{\bar{\mu}}=\prescript{}{2}{\bar{\mu}}$.
\end{thm}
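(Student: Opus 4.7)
The plan is to observe that this is the classical outer-action well-definedness theorem for group extensions, and that the Rota-Baxter structure plays no role in it: the map $\mu$ defined in \eqref{actions} depends only on the multiplicative structure of $E$, not on $R_E$, so the proof reduces to routine bookkeeping already done earlier in the section.

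For the first claim, I would invoke the formula \eqref{st action } that was derived above. If $s_1, s_2$ are two st-sections with $s_2(h)=s_1(h)\theta(h)$, $\theta(h)\in I$, then
$$\prescript{}{2}{\mu}_h \;=\; i_{\theta(h)^{-1}}\, \prescript{}{1}{\mu}_h,$$
so $\prescript{}{1}{\mu}_h$ and $\prescript{}{2}{\mu}_h$ differ by the inner automorphism $i_{\theta(h)^{-1}} \in \Inn(I)$. Composing with the projection $\Aut(I)\to\Out(I)$ kills this discrepancy, hence $\prescript{}{1}{\bar{\mu}}_h = \prescript{}{2}{\bar{\mu}}_h$ for every $h\in H$. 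This proves independence of the st-section.

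For the second claim, let $\phi:E_1\to E_2$ be an equivalence of Rota-Baxter extensions and let $s_1$ be any st-section of $E_1$. Put $s_2 := \phi\circ s_1$. Commutativity of the right square in the defining diagram gives $\pi_2\circ s_2 = \pi_1\circ s_1 = \Id_H$, and $s_2(0) = \phi(0)=0$, so $s_2$ is a legitimate st-section of $E_2$. Using that $\phi$ restricts to $\Id_I$ on the kernel, for any $y\in I$,
$$\prescript{}{2}{\mu}_h(y) \;=\; s_2(h)^{-1}\, y\, s_2(h) \;=\; \phi(s_1(h))^{-1}\, \phi(y)\, \phi(s_1(h)) \;=\; \phi\bigl(s_1(h)^{-1}y\, s_1(h)\bigr) \;=\; \prescript{}{1}{\mu}_h(y).$$
Thus the two actions coincide already in $\Aut(I)$ for this particular pair of sections, and by the first part of the theorem any other choice of st-sections changes each of $\prescript{}{i}{\mu}$ only by inner automorphisms. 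Hence $\prescript{}{1}{\bar{\mu}}=\prescript{}{2}{\bar{\mu}}$ in $\Out(I)$.

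There is essentially no obstacle: the only subtle point is to notice that the Rota-Baxter hypotheses $R_{E_i}\phi=\phi R_{E_i}$ and $\phi(0)=0$ are used only to guarantee that $\phi\circ s_1$ is a bona fide st-section; the remaining calculation is the textbook argument for outer actions of group extensions, because $\mu$ was defined entirely by conjugation. The auxiliary Rota-Baxter data $g$ and $\tilde{\tau}$ do not enter the statement and so need not be tracked here.
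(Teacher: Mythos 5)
Your proof is correct and takes essentially the approach the paper intends: the paper states this theorem without proof, introducing it as "a well-known theorem of group extensions, which stands true for Rota-Baxter extensions," and your argument is precisely that standard one — section-independence via the paper's own relation $\prescript{}{2}{\mu}_h = i_{\theta(h)^{-1}}\,\prescript{}{1}{\mu}_h$, and the pulled-back st-section $s_2=\phi\circ s_1$ to compare equivalent extensions. Your closing observation that the Rota-Baxter data ($R_E$, $g$, $\tilde{\tau}$) never enters the computation is exactly the reason the paper feels entitled to quote the classical result rather than reprove it.
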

In the view of proceeding theorem, for every equivalence class of extensions $[\mathcal{E}] \in Ext(H,I)$ there is a unique homomorphism $\bar{\mu}: H \rightarrow \Out(I),$ called the coupling associated to $[\mathcal{E}]$. 
Let $(H, R_H)$ and $(I, R_I)$ be two Rota-Baxter groups and $\bar{\mu}: H \rightarrow \Out(I)$ a coupling. We set
$$\Ext_{\bar{\mu}}(H, I)=\{[\mathcal{E}]\hspace{.1cm} \vert \hspace{.1cm}\bar{\mu} \mbox{ is the coupling associated to } \mathcal{E} \}.$$
Thus, we can write
\vspace{-.01cm}
$$   \Ext(H, I)=\bigsqcup_{\bar{\mu}}\Ext_{\bar{\mu}}(H, I).$$
\begin{defn} 
Let $(H, R_H)$ and $(I, R_I)$ be two Rota-Baxter groups.  Let $\mu : H \rightarrow \Aut(I),$ $ \tau: H \times H \rightarrow I,$
and $g: H \rightarrow I $ be the maps  that  satisfy \eqref{st action }, \eqref{NA parent 2} and \eqref{cocycle 2}.  We call the triplet $(\mu, \tau, g)$ to be an associated triplet corresponding to action of $H$ on $I$.
\end{defn}

\begin{thm}
Every $(\mu, \tau,g)$ be an associated triplet corresponding to action of $H$ on $I$ defines a Rota-Baxter extension of $(H, R_H)$ by $(I, R_I)$ denoted by  $\mathcal{E}(\mu, \tau,g)$.  Furthermore, if $\mathcal{E}(\mu, \tau,g)$ is a Rota-Baxter extension of $(H, R_H)$ by $(I, R_I)$  defined by $( \mu, \tau, g)$, then there exists a st-section of $\mathcal{E}(\mu, \tau,g)$ for which the  associated triplet is $( \mu, \tau, g)$.
\end{thm}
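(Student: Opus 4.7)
The plan is to mirror the abelian construction of Proposition~\ref{correspondence} and extend it to the non-abelian setting by working in the twisted product $E := H \times I$. I would define the binary operation on $E$ by
\[
(h_1, y_1)\cdot (h_2, y_2) := \bigl(h_1 h_2,\; \tau(h_1,h_2)\,\mu_{h_2}(y_1)\,y_2\bigr),
\]
and the candidate Rota--Baxter operator by
\[
R(h, y) := \Bigl(R_H(h),\; g(h)\,R_I\bigl(i_{g(h)^{-1}}(\mu_{R_H(h)}(y))\bigr)\Bigr),
\]
which is exactly what formula \eqref{RB oper} forces on the identification $s(h)y \leftrightarrow (h,y)$. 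One then has natural maps $i : I \to E$, $y \mapsto (1, y)$, and $\pi : E \to H$, $(h,y)\mapsto h$.

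First, I would check that $(E, \cdot)$ is a group. This is the classical non-abelian extension fact: the condition \eqref{cocycle 2} on $\tau$ together with the compatibility \eqref{st action } between $\mu$ and $\tau$ yields associativity, the identity is $(1, \tau(1,1)^{-1})$ (or $(1,1)$ after a normalisation of $\tau$), and inverses are given by the usual twisted formula. Since this is completely standard (see \cite[Ch.~2]{PSY18}), I would simply cite it.

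The main step -- and the principal obstacle -- is verifying that $R$ satisfies the Rota--Baxter identity $R(a)\,R(b) = R(a\,R(a)\,b\,R(a)^{-1})$ for all $a, b \in E$. For this I would write $a = (h_1, y_1)$ and $b = (h_2, y_2)$ and expand both sides. The right-hand side produces precisely the long expression appearing in \eqref{ NA big}, while the left-hand side produces the expression in \eqref{NA small}. Comparing the two sides reduces exactly to the identity \eqref{NA parent 2}, which is part of the definition of an associated triplet. Thus the verification is a careful but essentially mechanical reversal of the derivation that produced \eqref{NA parent 2} in the first place. Once this is done, the equalities $R \circ i = i \circ R_I$ and $\pi \circ R = R_H \circ \pi$ are immediate from the formula for $R$ (take $h = 1$ or project away $I$), so $i$ and $\pi$ become morphisms of Rota--Baxter groups and $\mathcal{E}(\mu,\tau,g)$ is a Rota--Baxter extension.

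For the converse statement, starting from the extension $\mathcal{E}(\mu,\tau,g)$ just constructed, I would take the canonical st-section $s : H \to E$ defined by $s(h) = (h, 1_I)$. A direct computation from the formula for the group operation on $E$ shows that $s(h_1)\,s(h_2) = s(h_1 h_2)\,\tau(h_1,h_2)$, so the $2$-cocycle attached to $s$ via \eqref{cocycle 1} is $\tau$; conjugation by $s(h)$ gives back the action $\mu_h$ via \eqref{actions}; and applying $R$ to $s(h) = (h, 1_I)$ returns $(R_H(h), g(h))$, so the map from \eqref{g map} is exactly $g$. Hence the triplet associated to $s$ is $(\mu, \tau, g)$, which completes the proof.
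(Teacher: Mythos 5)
Your proposal is correct and follows essentially the same route as the paper's own proof: you build the twisted product $H \times_{\mu,\tau} I$ with exactly the same group law, define the identical operator $R(h,y)=\bigl(R_H(h),\, g(h)\,R_I(i_{g(h)^{-1}}(\mu_{R_H(h)}(y)))\bigr)$, reduce the Rota--Baxter identity to \eqref{NA parent 2}, and recover the triplet from the canonical st-section $s(h)=(h,0)$. The only difference is that you spell out the mechanical verifications (associativity via \eqref{cocycle 2} and \eqref{st action }, the morphism conditions for $i$ and $\pi$) that the paper leaves implicit, which is fine.
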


\begin{proof}
Let $( \mu,\tau, g)$ be an associated triplet. Define $R: H \times_{\mu, \tau} I \rightarrow H \times_{\mu, \tau} I$ by 
$$R(h, y)=\big( R_H(h), g(h)R_I (i_{g(h)^{-1}}(\mu_{R_H(h)}(y))) \big).$$
Here $H \times I$ is a group extension of $H$ by $I$ defined by  $\mu$ and $\tau$.  More precisely, the group structure  on $H \times_{\mu, \tau} I$ is given by
$$(h,_1 y_1)(h_2, y_2)=(h_1 h_2, \tau(h_1, h_2) \mu_{h_2}(y_1)y_2).$$
The map $R$ defined above is a Rota-Baxter operator  is follows from \eqref{NA parent 2}. It is easy to see that 
$$\mathcal{E}(\mu, \tau,g) := 0 \to I \stackrel{i}{\to} H \times I\stackrel{\pi}{\to} H \to 0,$$ 
is a Rota-Baxter extension, where $i$ and $\pi$ are natural injection and projection, respectively. Let $ s : H \rightarrow H \times I $ be  $s(h):=(h, 0)$, where $0$ denotes the identity of $I$. Then  $s$ is a  st-section of $\mathcal{E}(\mu, \tau,g)$ and the triplet corresponding to $s$ is $(\mu, \tau, g).$
\hfill   $\Box$

\end{proof}
 Let $\alpha$ be a homomorphism from $H$ to $\Out(I)$.  Define  $$\mathcal{Z}^2_{\alpha}(H, I):=\{(\mu, \tau, g) \hspace{1mm}  \vert \hspace{1mm} \bar{\mu}=\alpha \mbox{ and } (\mu, \tau, g) \mbox{ is an associated triplet}\}.$$
Next we see relationship between the associated triplets which defines the equivalent extension. Let $(\prescript{}{1}{\mu}, \tau_1, g_1)$ and 
$(\prescript{}{2}{\mu}, \tau_2, g_2)$ be two elements of $\mathcal{Z}^2_{\alpha}(H, I)$.  We say that $(\prescript{}{1}{\mu}, \tau_1, g_1) \sim (\prescript{}{2}{\mu}, \tau_2, g_2) $  if the extensions $\mathcal{E}(\prescript{}{1}{\mu}, \tau_1, g_1)$ and  $\mathcal{E}(\prescript{}{2}{\mu}, \tau_2, g_2)$ are equivalent Rota-Baxter extensions.

\begin{prop}
The relation  `$\sim$' on $\mathcal{Z}^2_{\alpha}(H, I)$ is an equivalence relation.
\end{prop}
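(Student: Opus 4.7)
The plan is to reduce the claim to the fact that equivalence of Rota-Baxter extensions is itself an equivalence relation on the collection of extensions of $(H, R_H)$ by $(I, R_I)$, and then to verify the three defining properties directly. Since $(\prescript{}{1}{\mu}, \tau_1, g_1) \sim (\prescript{}{2}{\mu}, \tau_2, g_2)$ holds exactly when $\mathcal{E}(\prescript{}{1}{\mu}, \tau_1, g_1)$ and $\mathcal{E}(\prescript{}{2}{\mu}, \tau_2, g_2)$ are equivalent as Rota-Baxter extensions, it suffices to show that equivalence of Rota-Baxter extensions is reflexive, symmetric, and transitive.

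For reflexivity, the identity map $\mathrm{Id}: \mathcal{E}(\mu, \tau, g) \to \mathcal{E}(\mu, \tau, g)$ is trivially a Rota-Baxter group isomorphism making the defining diagram commute, so every triplet is related to itself.

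For symmetry, suppose $\phi: \mathcal{E}(\prescript{}{1}{\mu}, \tau_1, g_1) \to \mathcal{E}(\prescript{}{2}{\mu}, \tau_2, g_2)$ is an isomorphism of Rota-Baxter groups fitting into the standard commutative diagram with identities on $I$ and $H$. Let $R_1, R_2$ denote the respective Rota-Baxter operators. Since $\phi$ is bijective and $\phi \circ R_1 = R_2 \circ \phi$, applying $\phi^{-1}$ on both sides gives $R_1 \circ \phi^{-1} = \phi^{-1} \circ R_2$, so $\phi^{-1}$ is a morphism of Rota-Baxter groups; it is also a group isomorphism and clearly commutes with the inclusions of $I$ and the projections onto $H$. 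Hence the reversed diagram commutes, giving symmetry.

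For transitivity, suppose $\phi: \mathcal{E}_1 \to \mathcal{E}_2$ and $\psi: \mathcal{E}_2 \to \mathcal{E}_3$ are equivalences of Rota-Baxter extensions with respective Rota-Baxter operators $R_1, R_2, R_3$. The composite $\psi \circ \phi$ is an isomorphism of groups, and the computation
\[
(\psi \circ \phi) \circ R_1 = \psi \circ (R_2 \circ \phi) = (R_3 \circ \psi) \circ \phi = R_3 \circ (\psi \circ \phi)
\]
shows it is a morphism of Rota-Baxter groups. Commutativity of the outer diagram with identities on $I$ and $H$ follows by pasting the two given commutative diagrams. Therefore $\mathcal{E}_1$ and $\mathcal{E}_3$ are equivalent Rota-Baxter extensions, establishing transitivity.

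There is no real obstacle here: the only point that is not entirely formal for ordinary group extensions is checking that the inverse and the composite of morphisms of Rota-Baxter groups are again morphisms of Rota-Baxter groups, and both verifications are one-line consequences of the intertwining relation $\phi \circ R = R' \circ \phi$.
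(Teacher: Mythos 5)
Your proof is correct and coincides with what the paper intends: the paper states this proposition without any proof, treating it as the routine observation that equivalence of Rota-Baxter extensions is reflexive, symmetric, and transitive. Your verification---the identity map for reflexivity, passing the intertwining relation $\phi \circ R_1 = R_2 \circ \phi$ to $\phi^{-1}$ for symmetry, and composing isomorphisms and pasting commutative diagrams for transitivity---is exactly the expected argument and fills in the omitted details correctly.
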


\begin{thm}
Two associated triplets $(\prescript{}{1}{\mu}, \tau_1, g_1)$ and  $(\prescript{}{2}{\mu}, \tau_2, g_2)$ defines equivalent extensions if and only if there exists a map $\theta: H \rightarrow I$ such that $(\prescript{}{1}{\mu}, \tau_1, g_1)$ and  $(\prescript{}{2}{\mu}, \tau_2, g_2)$ with $\theta$ satisfies \eqref{st action },\eqref{NA cocycle relation} and \eqref{NA g condn}.
\end{thm}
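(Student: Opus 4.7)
The plan is to prove both directions by exploiting the fact that an equivalence of Rota-Baxter extensions is essentially determined by an isomorphism $\phi : E_1 \to E_2$ of Rota-Baxter groups fixing $I$ pointwise and descending to the identity on $H$, and that such a $\phi$ is in turn encoded by a single set-map $\theta : H \to I$.

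For the forward direction, assume we have an equivalence $\phi : \mathcal{E}(\prescript{}{1}{\mu}, \tau_1, g_1) \to \mathcal{E}(\prescript{}{2}{\mu}, \tau_2, g_2)$. Let $s_1, s_2$ be the canonical st-sections $h \mapsto (h,0)$ of the two extensions. Since $\pi_2 \phi s_1 = \pi_1 s_1 = \Id_H$, the element $\phi(s_1(h))$ lies in the coset of $s_2(h)$ modulo $I$, so there is a unique $\theta(h) \in I$ with $\phi(s_1(h)) = s_2(h)\theta(h)$. Now I would read off the three conditions successively: first, applying $\phi$ to $s_1(h)^{-1} y s_1(h) = \prescript{}{1}{\mu}_h(y)$ and using that $\phi$ fixes $I$ yields \eqref{st action }; second, applying $\phi$ to the identity $s_1(h_1)s_1(h_2) = s_1(h_1 h_2)\tau_1(h_1, h_2)$ produces \eqref{NA cocycle relation}; third, applying $\phi$ to $R_{E_1}(s_1(h)) = s_1(R_H(h)) g_1(h)$ and using that $\phi$ is a morphism of Rota-Baxter groups (together with the explicit formula for $R_{E_2}$ on $s_2(R_H(h))\theta(R_H(h))g_2(h)$) delivers \eqref{NA g condn}.

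For the backward direction, given $\theta$ satisfying the three identities, I would define
\[
\phi : \mathcal{E}(\prescript{}{1}{\mu}, \tau_1, g_1) \longrightarrow \mathcal{E}(\prescript{}{2}{\mu}, \tau_2, g_2), \qquad \phi(h, y) = (h, \theta(h) y).
\]
The identities \eqref{st action } and \eqref{NA cocycle relation} together imply that $\phi$ respects the semi-direct-style group laws (this is the classical verification from ordinary non-abelian group extensions, already established in the theory cited as \cite{PSY18}). That $\phi$ intertwines the inclusions of $I$ and projections onto $H$ is immediate from the formula, so $\phi$ will be an equivalence of the underlying group extensions; bijectivity is clear since $\theta(h)$ is invertible in $I$.

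The only genuinely new verification, and the step I expect to be the main obstacle, is checking that $\phi$ commutes with the Rota-Baxter operators $R_1, R_2$. Here one must expand both $R_2(\phi(h,y)) = R_2(h, \theta(h)y)$ and $\phi(R_1(h,y)) = \phi(R_H(h), g_1(h) R_I(i_{g_1(h)^{-1}}(\prescript{}{1}{\mu}_{R_H(h)}(y))))$ using the defining formula for $R_i$, and show they coincide. The crucial cancellation uses \eqref{NA g condn} to convert $\theta(R_H(h)) g_2(h)$ into $g_1(h) R_I(i_{g_1(h)^{-1}}(\prescript{}{1}{\mu}_{R_H(h)}(\theta(h))))$, while \eqref{st action } is needed to rewrite $\prescript{}{2}{\mu}_{R_H(h)}$ in terms of $\prescript{}{1}{\mu}_{R_H(h)}$ and the inner automorphism $i_{\theta(R_H(h))^{-1}}$; the compatibility $R_I(i_{x^{-1}}(\,\cdot\,))$ conjugates through precisely so that the two expressions agree. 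Once these manipulations are organized carefully, one obtains $\phi R_1 = R_2 \phi$, and the proof is complete.
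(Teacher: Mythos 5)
Your outline follows the same route as the paper's own proof: extract $\theta$ by comparing the equivalence with the canonical st-sections $h\mapsto (h,0)$, and conversely realize the equivalence as a translation by $\theta$, with the group-law part being the classical extension-theory computation and the Rota--Baxter part resting on \eqref{NA g condn}. However, there is a concrete orientation error, and because the three relations \eqref{st action }, \eqref{NA cocycle relation}, \eqref{NA g condn} are \emph{not} symmetric in the indices $1$ and $2$, two of your steps fail as written. Writing $\mathcal{E}_i$ for $\mathcal{E}(\prescript{}{i}{\mu},\tau_i,g_i)$, these relations encode an equivalence going from $\mathcal{E}_2$ \emph{to} $\mathcal{E}_1$, $(h,y)\mapsto (h,\theta(h)y)$ (this is the direction the paper uses), whereas you orient everything from $\mathcal{E}_1$ to $\mathcal{E}_2$.

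Concretely, in your converse direction the map $\phi(h,y)=(h,\theta(h)y)\colon \mathcal{E}_1\to\mathcal{E}_2$ is not a group homomorphism under the stated relations: already for $y_1=y_2=0$ one finds
\begin{align*}
\phi\big((h_1,0)(h_2,0)\big)&=\big(h_1h_2,\ \theta(h_1h_2)\,\tau_1(h_1,h_2)\big),\\
\phi(h_1,0)\,\phi(h_2,0)&=\big(h_1h_2,\ \theta(h_1h_2)^{-1}\,\tau_1(h_1,h_2)\,\prescript{}{1}{\mu}_{h_2}(\theta(h_1))^{2}\,\theta(h_2)^{2}\big),
\end{align*}
where the second line comes from substituting \eqref{NA cocycle relation} and then \eqref{st action }: the terms $\prescript{}{1}{\mu}_{h_2}(\theta(h_1))$ and $\theta(h_2)$ double up instead of cancelling, exactly because the relations were built for the opposite orientation. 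Symmetrically, in your forward direction the identities that actually follow from $\phi(s_1(h))=s_2(h)\theta(h)$ with $\phi\colon E_1\to E_2$ are the three relations with the indices $1$ and $2$ interchanged; for instance you obtain $\prescript{}{2}{\mu}_h=i_{\theta(h)}\,\prescript{}{1}{\mu}_h$, not $\prescript{}{2}{\mu}_h=i_{\theta(h)^{-1}}\,\prescript{}{1}{\mu}_h$ as \eqref{st action } demands. The repair is purely notational, since an extension equivalence can be inverted: either run both directions with an equivalence $\mathcal{E}_2\to\mathcal{E}_1$, $(h,y)\mapsto (h,\theta(h)y)$, as the paper does, or keep your orientation and replace $\theta$ throughout by its pointwise inverse $h\mapsto\theta(h)^{-1}$. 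With that adjustment your argument coincides with the paper's proof; without it, the verifications you describe do not close.
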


\begin{proof}
Let $\theta:  H \rightarrow I$ be a map that satisfies \eqref{st action },\eqref{NA cocycle relation} and \eqref{NA g condn}.  Define 
$$ \psi :  \mathcal{E}(\prescript{}{2}{\mu}, \tau_2, g_2) \rightarrow  \mathcal{E}(\prescript{}{1}{\mu}, \tau_1, g_1) \mbox{ by }$$ 
$$\psi(h,y):=(h,  \theta(h) y ).$$
Claim is that $\psi$ is an isomorphism of Rota-Baxter groups and the diagram commutes
$$\begin{CD}
0 @>>> I @>i_2>> \mathcal{E}(\prescript{}{2}{\mu}, \tau_2, g_2) @>{{\pi_2} }>> H @>>> 0\\ 
&& @V{\text{Id}} VV@V{\psi} VV @V{\text{Id} }VV \\
0 @>>> I @>i_1>>  \mathcal{E}(\prescript{}{1}{\mu}, \tau_1, g_1)  @>{{\pi_1} }>> H  @>>> 0.
\end{CD}$$
\textit{Proof of the claim }: Let $R_1$ and $R_2$ be the Rota-Baxter operators corresponding to $\mathcal{E}(\prescript{}{1}{\mu}, \tau_1, g_1)$  and $\mathcal{E}(\prescript{}{2}{\mu}, \tau_2, g_2)$, respectively.  For $h_1, h_2 \in H$ and $y_1, y_2 \in I $, we have 
\begin{align*}
\psi\big( (h_1, y_1)(h_2, y_2) \big)&=\psi\big(h_1 h_2, \tau_2(h_1, h_2) \prescript{}{2}{\mu}_{h_2}(y_1) y_2  \big)\notag \\
&=\big( h_1h_2,  \theta(h_1 h_2)\tau_2(h_1, h_2) \prescript{}{2}{\mu}_{h_2}(y_1) y_2  \big)\notag \\
&=\big (h_1 h_2,  \tau_1(h_1, h_2) \prescript{}{1}{\mu}_{h_2}(\theta(h_1)) \theta(h_2)\prescript{}{2}{\mu}_{h_2}(y_1) y_2  \big)\notag\\
&=\big (h_1 h_2,  \tau_1(h_1, h_2) \prescript{}{1}{\mu}_{h_2}(\theta(h_1)) \prescript{}{1}{\mu}_{h_2}(y_1) \theta(h_2) y_2  \big)\notag \\
&=\psi(h_1, y_1) \psi(h_2, y_2).
\end{align*}
Next we have 
\begin{align}\label{RBmor}
\psi R_2(h, y)&=\psi\big( R_H(h), g_2(h)R_I (i_{g_2(h)^{-1}}(\mu_{R_H(h)}(y))) \big)\notag \\
&= \big( R_H(h), \theta(R_H(h)) g_2(h)R_I (i_{g_2(h)^{-1}}(\mu_{R_H(h)}(y))) \big).
\end{align}
Using   \eqref{NA g condn} in \eqref{RBmor}, we get that $\psi$ is a morphism of Rota-Baxter groups.  Proving $\psi$ is bijection and the above diagram  commutes is a simple verification.

\noindent \textit{Conversely}: Let $(\prescript{}{1}{\mu}, \tau_1, g_1)$ and  $(\prescript{}{2}{\mu}, \tau_2, g_2)$ be two associated triplets such that there exist an isomorphism $\psi:\mathcal{E}(\prescript{}{2}{\mu}, \tau_2, g_2) \rightarrow \mathcal{E}(\prescript{}{1}{\mu}, \tau_1, g_1) $ and the diagram commutes
$$\begin{CD}
0 @>>> I @>i_2>> \mathcal{E}(\prescript{}{2}{\mu}, \tau_2, g_2) @>{{\pi_2} }>> H @>>> 0\\ 
&& @V{\text{Id}} VV@V{\psi} VV @V{\text{Id} }VV \\
0 @>>> I @>i_1>>  \mathcal{E}(\prescript{}{1}{\mu}, \tau_1, g_1)  @>{{\pi_1} }>> H  @>>> 0.
\end{CD}$$
Let $h \in H $ and $y \in I$. Using the commutativity of the above diagram we have 
$\psi i_2 = i_1 $ and $\pi_2 \psi= \pi_1$, which implies that $\psi(e,y)=(e, y)$ and $\psi(h, 0)=(h, g_h)$ for some unique $g_h \in I$. 
Define $\theta: H \rightarrow I$ by $\theta(h):=g_h$. It is easy to see that $\theta$ is the required map. 
\hfill   $\Box$

\end{proof}

\begin{cor}
Let $(\prescript{}{1}{\mu}, \tau_1, g_1)$ and  $(\prescript{}{2}{\mu}, \tau_2, g_2)$ be two elements of 
$\mathcal{Z}^2_{\alpha}(H, I)$.  Then $(\prescript{}{1}{\mu}, \tau_1, g_1)$ $\sim$  $(\prescript{}{2}{\mu}, \tau_2, g_2)$ if there exists a  map $\theta: H \rightarrow I $ such that $(\prescript{}{1}{\mu}, \tau_1, g_1)$ and  $(\prescript{}{2}{\mu}, \tau_2, g_2)$ satisfies \eqref{st action }, \eqref{NA cocycle relation} and \eqref{NA g condn}.
\end{cor}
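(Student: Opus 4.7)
My plan is to derive this corollary as an immediate consequence of the preceding theorem, which already characterizes when two associated triplets in $\mathcal{Z}^2_{\alpha}(H,I)$ produce equivalent Rota-Baxter extensions. Recall that, by definition, $(\prescript{}{1}{\mu}, \tau_1, g_1) \sim (\prescript{}{2}{\mu}, \tau_2, g_2)$ precisely when $\mathcal{E}(\prescript{}{1}{\mu}, \tau_1, g_1)$ and $\mathcal{E}(\prescript{}{2}{\mu}, \tau_2, g_2)$ are equivalent as Rota-Baxter extensions of $(H,R_H)$ by $(I,R_I)$.

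Given a map $\theta : H \to I$ satisfying \eqref{st action }, \eqref{NA cocycle relation} and \eqref{NA g condn}, I would invoke the (if) direction of the preceding theorem to conclude that the assignment
\[
\psi : \mathcal{E}(\prescript{}{2}{\mu}, \tau_2, g_2) \longrightarrow \mathcal{E}(\prescript{}{1}{\mu}, \tau_1, g_1), \qquad \psi(h,y) := (h, \theta(h) y),
\]
is a Rota-Baxter group isomorphism fitting into the standard commutative ladder diagram with identities on $I$ and $H$. The verifications that $\psi$ is a group homomorphism (using \eqref{st action } and \eqref{NA cocycle relation}) and that it intertwines the two Rota-Baxter operators (using \eqref{NA g condn}) were already carried out in the proof of that theorem, so I would simply cite them.

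Once $\psi$ is exhibited as an equivalence of Rota-Baxter extensions, the defining condition of $\sim$ on $\mathcal{Z}^2_{\alpha}(H,I)$ is satisfied and therefore $(\prescript{}{1}{\mu}, \tau_1, g_1) \sim (\prescript{}{2}{\mu}, \tau_2, g_2)$. There is no real obstacle to overcome here: the corollary is nothing more than the (if) direction of the preceding theorem, rephrased in the language of the equivalence relation $\sim$ that was introduced immediately before it. Accordingly, the proof amounts to one or two lines of reference to the theorem together with unfolding the definition of $\sim$.
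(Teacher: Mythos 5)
Your proposal is correct and matches the paper's treatment: the paper offers no separate proof for this corollary, precisely because it is the (if) direction of the preceding theorem combined with the definition of $\sim$ on $\mathcal{Z}^2_{\alpha}(H,I)$, which is exactly the reduction you make. Citing the isomorphism $\psi(h,y)=(h,\theta(h)y)$ and its verification from the theorem's proof is all that is needed.
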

\begin{defn}
Define $$\mathcal{H}^2_{\alpha}(H, I):=\mathcal{Z}^2_{\alpha}(H, I)/ \sim$$ and denote $[(\mu,\tau, g)] \in \mathcal{H}^2_{\alpha}(H, I)$, the equivalence class of $(\mu,\tau, g)$. 
\end{defn}

\begin{thm}
There exist a bijection between  $\mathcal{H}^2_{\alpha}(H, I)$ and $\Ext_{\alpha}(H, I).$
\end{thm}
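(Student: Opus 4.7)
The plan is to mimic the strategy of Theorem \ref{bij} (the abelian case) but now with the richer triplet $(\mu,\tau,g)$ replacing the pair $(\tau,g)$, and to leverage the previous theorem which already gave us an extension from any associated triplet and showed that two triplets differing by a map $\theta$ satisfying \eqref{st action}, \eqref{NA cocycle relation}, \eqref{NA g condn} produce equivalent extensions.

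First, I would define the two maps
\begin{align*}
\psi : \Ext_\alpha(H,I) \longrightarrow \mathcal{H}^2_\alpha(H,I), \qquad \omega : \mathcal{H}^2_\alpha(H,I) \longrightarrow \Ext_\alpha(H,I)
\end{align*}
as follows. For an extension $\mathcal{E}$ with coupling $\alpha$, pick any st-section $s$ and let $\psi([\mathcal{E}]):=[(\mu,\tau,g)]$ with $\mu,\tau,g$ given by \eqref{actions}, \eqref{cocycle 1} and \eqref{g map}. The identities \eqref{NA parent 2} and \eqref{cocycle 2} derived in the section show $(\mu,\tau,g)\in \mathcal{Z}^2_\alpha(H,I)$. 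For a triplet $(\mu,\tau,g)\in \mathcal{Z}^2_\alpha(H,I)$, put $\omega([(\mu,\tau,g)]):=[\mathcal{E}(\mu,\tau,g)]$ using the construction of the previous theorem.

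Next I would verify that both maps are well-defined. For $\psi$: if $s_1,s_2$ are two st-sections of the same extension $\mathcal{E}$, the map $\theta(h):=z_h$ defined by $s_2(h)=s_1(h)z_h$ satisfies \eqref{st action}, \eqref{NA cocycle relation} and \eqref{NA g condn} (these were derived in the main computation leading up to the theorem), so the two associated triplets lie in the same $\sim$-class. If $\mathcal{E}_1\sim \mathcal{E}_2$ via an isomorphism $\phi$, then transporting the st-section $s_1$ of $\mathcal{E}_1$ to $s'=\phi s_1$ of $\mathcal{E}_2$ gives the \emph{same} triplet, which by the first observation is equivalent to the triplet computed from any other st-section $s_2$ of $\mathcal{E}_2$. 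Well-definedness of $\omega$ is exactly the content of the previous theorem (the ``only if'' direction of its equivalence statement).

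Finally, I would show $\psi$ and $\omega$ are mutual inverses. For $\psi\circ\omega=\mathrm{Id}$, given $(\mu,\tau,g)$, use the canonical st-section $s(h)=(h,0)$ of $\mathcal{E}(\mu,\tau,g)$; a direct computation from the definitions of the product and of $R$ in $H\times_{\mu,\tau} I$ recovers exactly $(\mu,\tau,g)$ as its associated triplet. For $\omega\circ\psi=\mathrm{Id}$, given an extension $\mathcal{E}$ and a st-section $s$ producing $(\mu,\tau,g)$, the map $\mathcal{E}(\mu,\tau,g)\to \mathcal{E}$, $(h,y)\mapsto s(h)y$, is a bijection; it is a group morphism by construction of the twisted product and a Rota-Baxter morphism by the formula \eqref{RB oper} expressing $R_E(s(h)y)$ in terms of $g,\mu$ and $R_I$, so $\mathcal{E}(\mu,\tau,g)$ and $\mathcal{E}$ are equivalent extensions.

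The main obstacle I expect is bookkeeping in the well-definedness of $\psi$: in the non-abelian setting $\mu$ itself depends on the choice of st-section (only its reduction $\bar{\mu}$ to $\Out(I)$ is canonical), so one must carefully track how $\mu$, $\tau$ and $g$ simultaneously transform under a change of section, and verify that \eqref{st action}, \eqref{NA cocycle relation} and \eqref{NA g condn} do hold with the same $\theta$. Once this compatibility is in place, the remaining verifications reduce to the computations already carried out in the preceding pages.
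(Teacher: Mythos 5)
Your proposal is correct and follows essentially the same route as the paper: the paper defines the single map $[(\mu,\tau,g)]\mapsto[\mathcal{E}(\mu,\tau,g)]$, obtaining well-definedness and injectivity directly from the definition of $\sim$ (which is phrased as equivalence of the constructed extensions) and surjectivity from the associated-triplet construction via \eqref{actions}, \eqref{cocycle 1} and \eqref{g map}. Your two maps $\psi$ and $\omega$ together with the verification that they are mutual inverses are just the explicit two-sided formulation of that same argument, with the change-of-section bookkeeping you flag being exactly what the relations \eqref{st action }, \eqref{NA cocycle relation} and \eqref{NA g condn} and the preceding theorem already supply.
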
 
\begin{proof}
 The map $\phi:  \mathcal{H}^2_{\alpha}(H, I) \rightarrow \Ext_{\alpha}(H, I) $ defined by $\phi([(\mu, \tau, g)]):=[\mathcal{E}(\mu, \tau,g)]$ is well defined and injective follows from definition of `$\sim$'. The map is surjective follows from the construction of $\mu, \tau$ and $g$ as defined in \eqref{actions}, \eqref{cocycle 1} and \eqref{g map}, respectively.
\end{proof}
\begin{prop}
Let $(H, R_H)$ and $(I, R_I)$ be Rota-Baxter groups such that  $\Z(I)$ is a Rota-Baxter subgroup of $(I, R_I)$.  Let $(\mu, \tau, g)$ be an associated triplet corresponding to action of $H$ on $I$,  then $(\Z(I), R_I)$ is an $(H, R_H)$-module with action $\tilde{\mu}: H \rightarrow \Aut(\Z(I))$, where $\tilde{\mu}_h:=\mu_h \vert_{Z(I)}$.  Moreover, every element of  $\mathcal{Z}^2_{\alpha}(H, I)$ induces same action on $\Z(I)$.  
\end{prop}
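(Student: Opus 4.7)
The plan is to verify the three requirements on $(\Z(I),R_I)$ one by one, then deduce the uniqueness of the induced action on $\Z(I)$ from the defining property of the coupling.

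First, I would show that $\tilde\mu_h$ is well defined, i.e.\ $\mu_h(\Z(I))\subseteq\Z(I)$. Since each $\mu_h\in\Aut(I)$, it carries $\Z(I)$ to $\Z(I)$, so $\tilde\mu_h\in\Aut(\Z(I))$. Next I would check that $\tilde\mu:H\to\Aut(\Z(I))$ is an anti-homomorphism. In general one only has
\[\mu_{h_1h_2}=i_{\tau(h_1,h_2)^{-1}}\,\mu_{h_2}\mu_{h_1},\]
but every inner automorphism $i_x$ fixes $\Z(I)$ pointwise, hence restricting to $\Z(I)$ yields $\tilde\mu_{h_1h_2}=\tilde\mu_{h_2}\tilde\mu_{h_1}$, as needed. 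The same observation shows that $\tilde\mu_e=\Id_{\Z(I)}$.

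The main content is the Rota-Baxter module compatibility
\[\tilde\mu_{R_H(h)}(R_I(z))=R_I\!\bigl(\tilde\mu_{hR_H(h)}(z+R_I(z))-\tilde\mu_{R_H(h)}(R_I(z))\bigr),\qquad z\in\Z(I).\]
To obtain it, I would revisit the derivation that produced \eqref{NA parent 2}. Specializing the equality $R_E(s(h_1)y_1)R_E(s(h_2)y_2)=R_E(s(h_1)\cdot R_E(s(h_1)y_1)\cdot s(h_2)y_2\cdot R_E(s(h_1)y_1)^{-1})$ to $h_1=e$, $y_1=z\in\Z(I)$, $h_2=h$, $y_2=0$ (so $\tau$- and $g$-terms involving the identity disappear), all the conjugations $i_{g(\cdot)^{-1}}$ act trivially on the $\Z(I)$-valued arguments because $\mu_{R_H(\,\cdot\,)}(z)\in\Z(I)$ whenever $z\in\Z(I)$. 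The non-abelian identity therefore collapses to exactly the abelian compatibility condition \eqref{comp condn} with $h_1=e$, which is precisely the $(H,R_H)$-module axiom for $(\Z(I),R_I,\tilde\mu)$. Alternatively, one can note that the argument of Section~4 that produced \eqref{comp condn} applies verbatim inside $\Z(I)$, since every extra inner automorphism that appears in the non-abelian setting becomes the identity when evaluated on a central element.

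Finally, for the independence statement: if $(\prescript{}{1}{\mu},\tau_1,g_1)$ and $(\prescript{}{2}{\mu},\tau_2,g_2)$ both lie in $\mathcal{Z}^2_\alpha(H,I)$, then $\overline{\prescript{}{1}{\mu}}=\overline{\prescript{}{2}{\mu}}=\alpha$, which means that for every $h\in H$ there exists $x_h\in I$ with $\prescript{}{2}{\mu}_h=i_{x_h}\prescript{}{1}{\mu}_h$ (this is exactly \eqref{st action } with $\theta(h)^{-1}=x_h$ when the two triplets come from st-sections of a common or equivalent extension, and more generally holds for any two lifts of the same coupling). Restricting to $\Z(I)$ makes $i_{x_h}$ the identity, so $\prescript{}{2}{\tilde{\mu}}_h=\prescript{}{1}{\tilde{\mu}}_h$ on $\Z(I)$, giving the common induced action. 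The main obstacle I anticipate is bookkeeping in the specialization of \eqref{NA parent 2}, where one must carefully track which terms lie in $\Z(I)$ so as to kill the inner-automorphism factors; once that is set up the verification reduces to the abelian identity \eqref{comp condn} already established.
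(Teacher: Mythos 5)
Your proposal is correct and follows essentially the same route as the paper: specialize \eqref{NA parent 2} at $h_1$ equal to the identity (killing all $\tau$- and $g(h_1)$-terms), then use that $\Z(I)$ is invariant under $R_I$ and each $\mu_h$ so that the inner-automorphism factors act trivially, recovering the module axiom \eqref{comp condn}. You also spell out two points the paper leaves implicit --- that $\tilde\mu$ restricts to a genuine anti-homomorphism on $\Z(I)$ via $\mu_{h_1h_2}=i_{\tau(h_1,h_2)^{-1}}\mu_{h_2}\mu_{h_1}$, and that the ``moreover'' claim follows because any two lifts of the coupling $\alpha$ differ by inner automorphisms, which vanish on the center --- both of which are correct and strengthen the write-up.
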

\begin{proof}
By putting $h_1=0$ in  \eqref{NA parent 2}, we get an expression independent of $\tau$. Now using  that $y_1, y_2,  \in \Z(I)$ and $\Z(I)$ is invariant under $R_I$ and $\mu_h$ for $h \in H$, we can easily establish that $(\Z(I), R_I)$ is a  $(H, R_H)$-module via action $\tilde{\mu}$.
\end{proof}

\begin{thm}\label{cocycle change}
Let $(\mu,\tau, g)$ be an associated triplet and $\mu^\prime $ be an action of $H$ on $I$ with $\overline{\mu}=\overline{\mu^\prime}$, then there exist $\tau^\prime : H\times H \rightarrow I$ and $g^\prime : H \rightarrow I $ such that $(\mu^\prime,\tau^\prime, g^\prime)$ is an associated triplet and $[(\mu,\tau, g)]=[(\mu^\prime,\tau^\prime, g^\prime)]$.
\end{thm}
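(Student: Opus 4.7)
The plan is to exploit the bijection $\phi$ between $\mathcal{H}^2_\alpha(H,I)$ and $\Ext_\alpha(H,I)$ established earlier, together with the description of how the associated triplet changes when one varies the st-section. Concretely, I would build the Rota-Baxter extension $\mathcal{E}(\mu,\tau,g)$, produce a new st-section whose induced action on $I$ is precisely the prescribed $\mu'$, and read off $\tau'$ and $g'$ from that section.

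The first step is to construct the required $\theta : H \to I$. Since $\overline{\mu}=\overline{\mu}'$, for every $h\in H$ there exists an element $\theta(h)\in I$ such that $\mu'_h = i_{\theta(h)^{-1}}\,\mu_h$, and one may choose $\theta$ as a set-map with $\theta(e)=0$ (which is compatible with $\mu'_e=\mu_e=\Id_I$). Consider the natural st-section $s(h):=(h,0)$ of $\mathcal{E}(\mu,\tau,g)$ (which realizes the triplet $(\mu,\tau,g)$) and define a new st-section by
\begin{equation*}
s'(h) := s(h)\cdot\theta(h) = (h,\theta(h)).
\end{equation*}
By the formula \eqref{st action } describing the change of action under a shift of section, the action induced by $s'$ is exactly $i_{\theta(h)^{-1}}\mu_h = \mu'_h$, as desired.

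Now define $\tau' : H\times H\to I$ and $g' : H\to I$ by the formulas \eqref{NA cocycle relation} and \eqref{NA g condn} applied with $\prescript{}{1}{\mu}=\mu$, $\prescript{}{2}{\mu}=\mu'$, $\tau_1=\tau$, $g_1=g$, that is
\begin{equation*}
\tau'(h_1,h_2) := \theta(h_1 h_2)^{-1}\,\tau(h_1,h_2)\,\mu_{h_2}(\theta(h_1))\,\theta(h_2),
\end{equation*}
\begin{equation*}
g'(h) := \theta(R_H(h))^{-1}\,g(h)\,R_I\!\big(i_{g(h)^{-1}}(\mu_{R_H(h)}(\theta(h)))\big).
\end{equation*}
By construction $(\mu',\tau',g')$ is precisely the triplet arising from the st-section $s'$ of the Rota-Baxter extension $\mathcal{E}(\mu,\tau,g)$, so it is automatically an associated triplet (the defining identities \eqref{st action }, the $2$-cocycle identity \eqref{cocycle 2}, and the Rota-Baxter compatibility \eqref{NA parent 2} all hold for $(\mu',\tau',g')$ because they are read off from an actual extension with an actual section). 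Finally, since $(\mu,\tau,g)$ and $(\mu',\tau',g')$ come from two st-sections of one and the same Rota-Baxter extension, the preceding theorem relating equivalent triplets via a map $\theta$ (namely the very $\theta$ constructed above, which satisfies \eqref{st action }, \eqref{NA cocycle relation}, \eqref{NA g condn} by definition) yields $[(\mu,\tau,g)] = [(\mu',\tau',g')]$ in $\mathcal{H}^2_\alpha(H,I)$.

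The main obstacle I anticipate is ensuring that $\theta$ can be chosen so that it simultaneously fixes $\mu'$ \emph{and} produces an associated triplet with the required $\mu'$-component; this is handled by the observation that the condition $\overline{\mu}=\overline{\mu}'$ is exactly what allows $\theta(h)$ to exist pointwise, while no further coherence on $\theta$ is needed because $\tau'$ and $g'$ are then forced by the above formulas and inherit all their structural identities from those of $(\mu,\tau,g)$ via the isomorphism $\varsigma(h,y)=(h,\theta(h)\,y)$ between $\mathcal{E}(\mu,\tau,g)$ and $\mathcal{E}(\mu',\tau',g')$. Verifying that this $\varsigma$ is a Rota-Baxter morphism is the only computational check, and it is the same calculation as in the converse direction of the previous theorem.
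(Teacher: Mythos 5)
Your proposal is correct and follows essentially the same route as the paper's own proof: use $\overline{\mu}=\overline{\mu^\prime}$ to produce $\theta$, shift a st-section of $\mathcal{E}(\mu,\tau,g)$ by $\theta$ so that the induced action becomes $\mu^\prime$, read off $(\tau^\prime,g^\prime)$ from the new section, and conclude equivalence because both triplets arise from sections of one and the same extension. Your version is in fact slightly more careful than the paper's (explicitly normalizing $\theta(e)=0$ so the shifted section is still a st-section, and writing out the formulas for $\tau^\prime$ and $g^\prime$ coming from \eqref{NA cocycle relation} and \eqref{NA g condn}), but the underlying argument is identical.
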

\begin{proof}
The condition $\overline{\mu}=\overline{\mu^\prime}$ implies that there exists a map $\theta : H \rightarrow I $ such that $\mu_h=i_{\theta(h)} \mu^\prime_h$.  Let $s_1$ be a st-section of $\mathcal{E}(\mu, \tau, g)$ inducing $(\mu, \tau, g)$. Define $s_2(h)=s_1(h) \theta(h)$ for $ h \in H$. Then the action induced by $s_2$ is $\mu^\prime$ and the $2$-cocycle $(\tau^\prime, g^\prime)$ corresponding to $s_2$ are the recquired maps.  As $s_1$ and $s_2$ are different sections of same extension so they will induce equivalent extensions which implies that $[(\mu,\tau, g)]=[(\mu^\prime,\tau^\prime, g^\prime)]$.
\hfill   $\Box$

\end{proof}

\begin{thm}
Let $(I, R_I)$ be a Rota-Baxter group such that $\Z(I)$ is a Rota-Baxter subgroup of $I$. Then there exists a  free action of $\Ho^2(H, \Z(I))$ on $\mathcal{H}^2_{\alpha}(H, I)$ where $(\Z(I), R_I)$ is a $(H, R_H)$-module with respect to the  action induced by elements of $\mathcal{Z}^2_{\alpha}(H, I)$.  
\end{thm}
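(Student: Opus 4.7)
The plan is to define the action at the level of representatives by pointwise multiplication and then verify well-definedness, the action axioms, and freeness. I interpret $\Ho^2(H,\Z(I))$ as $\Ho^2_{RBE}(H,\Z(I))$, which is the natural candidate given the preceding proposition that $(\Z(I),R_I)$ is an $(H,R_H)$-module and the fact that the action below intertwines Rota-Baxter structures. For $[(\sigma,\rho)] \in \Ho^2_{RBE}(H,\Z(I))$ and $[(\mu,\tau,g)] \in \mathcal{H}^2_{\alpha}(H,I)$, I would set
$$[(\sigma,\rho)] \cdot [(\mu,\tau,g)] := \bigl[\bigl(\mu,\ \sigma\tau,\ \rho g\bigr)\bigr],$$
with pointwise products in $I$. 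The structural fact driving every verification is that $\sigma$ and $\rho$ take values in $\Z(I)$: they commute with every element of $I$, are fixed by each inner automorphism $i_{g(h)^{\pm 1}}$ appearing in \eqref{NA parent 2}, and transform under $\mu$ via the restricted action $\tilde{\mu}$ that makes $(\Z(I),R_I)$ into an $(H,R_H)$-module.

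The first task is to check that $(\mu,\sigma\tau,\rho g)$ is again an associated triplet. Condition \eqref{st action } is automatic since $\mu$ is unchanged. The non-abelian cocycle identity \eqref{cocycle 2} for $\sigma\tau$ splits, after moving central $\sigma$-factors past non-central $\tau$-factors, into \eqref{cocycle 2} for $\tau$ and the additive $2$-cocycle identity $\delta^2(\sigma)=0$ coming from the first coordinate of $\partial^2_{RBE}(\sigma,\rho)=0$. For \eqref{NA parent 2}, every appearance of $\sigma$ or $\rho$ is central so the nested $i_{g(h)^{\pm1}}$ factors act trivially on them and the $\mu$-actions reduce to $\tilde{\mu}$; the residual $(\sigma,\rho)$-identity is then precisely the second coordinate of $\partial^2_{RBE}(\sigma,\rho)=0$, while the remaining pure $(\tau,g)$-identity is \eqref{NA parent 2} for $(\tau,g)$. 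Well-definedness on classes follows analogously: if $(\sigma_1,\rho_1)-(\sigma_2,\rho_2)=\partial^1_{RBE}(\eta)$ for some $\eta:H\to\Z(I)$ and $(\mu,\tau_1,g_1)\sim(\mu,\tau_2,g_2)$ via $\theta:H\to I$ satisfying \eqref{st action }, \eqref{NA cocycle relation}, \eqref{NA g condn}, then the pointwise product $\eta\theta:H\to I$ witnesses the equivalence of the two twisted triplets.

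The action axioms are then formal: the class of $(0,0)$ acts as the identity, and the group law reduces to associativity of multiplication in $I$ together with the fact that a sum of two central RBE-cocycles is again a central RBE-cocycle. For freeness, suppose $[(\sigma,\rho)]\cdot[(\mu,\tau,g)] = [(\mu,\tau,g)]$, witnessed by some $\theta:H\to I$. Relation \eqref{st action } forces $i_{\theta(h)^{-1}}=\mathrm{id}_I$, hence $\theta(h)\in\Z(I)$ for every $h\in H$. Substituting $\tau_1=\tau,\ \tau_2=\sigma\tau,\ g_1=g,\ g_2=\rho g$ into \eqref{NA cocycle relation} and \eqref{NA g condn} and cancelling the $\tau(h_1,h_2)$ and $g(h)$ factors (legal by centrality of $\theta$) yields $\sigma=\delta^1(\theta)$ and the matching RBE-boundary identity for $\rho$ in terms of $\theta$, i.e.\ $(\sigma,\rho)\in\im\partial^1_{RBE}=\B^2_{RBE}(H,\Z(I))$, whence $[(\sigma,\rho)]=0$.

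The main obstacle will be the Rota-Baxter step of the well-definedness check, namely proving that \eqref{NA parent 2} survives the twist by $(\sigma,\rho)$. The raw identity is long and involves deeply nested $\mu$, $R_I$, and $i_{g(h)^{-1}}$ factors, and it is not a priori obvious that inserting central correction terms preserves the equation. The argument rests on systematically exploiting centrality so that every conjugation by $g(h)$ disappears on $\sigma$- and $\rho$-factors, allowing a clean separation into the pure $(\tau,g)$-identity and the additive $(\sigma,\rho)$-identity encoded by $\partial^2_{RBE}$. I would formalize this by collecting the central contributions on one side and writing the full expansion for the twisted triplet as the product of the original expansion with a purely $\Z(I)$-valued correction, which can be handled separately using the $(H,R_H)$-module structure on $\Z(I)$ and the commutation of $R_I$ with $\tilde{\mu}$.
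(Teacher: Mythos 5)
Your proposal is correct and follows essentially the same route as the paper: the same pointwise-multiplication action $[(\sigma,\rho)]\cdot[(\mu,\tau,g)]=[(\mu,\sigma\tau,\rho g)]$, well-definedness via the product of the two equivalence witnesses, and freeness by using \eqref{st action } to force $\theta(h)\in\Z(I)$ and then reading \eqref{NA cocycle relation} and \eqref{NA g condn} as the statement that $(\sigma,\rho)$ is an RBE-coboundary. In fact you supply more detail than the paper does at the steps it labels ``easy'' and ``straightforward,'' namely the centrality argument that kills the inner automorphisms $i_{g(h)^{\pm1}}$ and splits \eqref{NA parent 2} into the original identity plus the additive $\Z(I)$-valued cocycle condition.
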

\begin{proof}
Let $[(\tau^\prime, g^\prime)] \in \Ho^2(H, \Z(I))$ and $[(\mu, \tau, g)] \in \mathcal{H}^2_{\alpha}(H, I)$.  Define
 \begin{align}\label{freeact}
 [(\tau^\prime , g^\prime)] [(\mu, \tau, g)]:=[(\mu, \tau \tau^\prime ,  g g^\prime)],
\end{align}
  where 
$\tau \tau^\prime (h_1, h_2):=\tau(h_1, h_2) \tau^\prime (h_1, h_2)$ and $g g^\prime (h):=g(h) g^\prime(h)$, for all $h_1, h_2 , h \in H$. It is easy to see that $(\mu, \tau \tau^\prime ,  g g^\prime) \in \mathcal{Z}^2_{\alpha}(H, I) $.  Let $(\tau_1^\prime, g_1^\prime), (\tau_2^\prime, g_2^\prime) \in \ker(\partial^2_{RBE})$ and $( \prescript{}{1}{\mu}_h , \tau_1, g_1), ( \prescript{}{2}{\mu}_h , \tau_2, g_2) \in \mathcal{Z}^2_{\alpha}(H, I)$ such that $[(\tau_1^\prime, g_1^\prime)]=[(\tau_2^\prime, g_2^\prime)]$  and $[( \prescript{}{1}{\mu}_h , \tau_1, g_1)]=[( \prescript{}{2}{\mu}_h , \tau_2, g_2)]$. We claim that $[( \prescript{}{1}{\mu}_h , \tau_1 \tau^\prime_1, g_1 g^\prime_1)] =[( \prescript{}{2}{\mu}_h , \tau_2 \tau^\prime_2, g_2 g^\prime_2)]$.  As  $[(\tau_1^\prime, g_1^\prime)]=[(\tau_2^\prime, g_2^\prime)]$, there exists  a map $\theta$ such that $(\tau_1^\prime, g_1^\prime)$ and $(\tau_2^\prime, g_2^\prime)$ are cohomologous by $\theta$.  Also, there exists a map $\theta^\prime$ such that $( \prescript{}{1}{\mu}_h , \tau_1, g_1) \sim ( \prescript{}{2}{\mu}_h , \tau_2, g_2)$ by $\theta^\prime$.  Straightforward calculations shows that 
$( \prescript{}{1}{\mu}_h , \tau_1 \tau^\prime_1, g_1 g^\prime_1) \sim ( \prescript{}{2}{\mu}_h , \tau_2 \tau^\prime_2, g_2 g^\prime_2)$ by $\theta \theta^\prime$, which proves the claim. This show that the action is well-defined.

Next we show that the defined action is free.  Let $[(\tau^\prime, g^\prime)] [(\mu, \tau, g)]=[(\mu, \tau \tau^\prime ,  g g^\prime)]=[(\mu, \tau  ,  g )]$, then we have $\theta : H \rightarrow I$ such that $(\mu, \tau \tau^\prime ,  g g^\prime) \sim  (\mu, \tau  ,  g )$  by $\theta$.  It follows from \eqref{st action } that $\theta(h) \in \Z(I)$ for all $h \in H$ and  \eqref{NA cocycle relation}, \eqref{NA g condn} proves that $[(\tau^\prime, g^\prime)]$ is the identity of $\Ho^2(H, \Z(I))$, which shows that the action is free. 
\hfill   $\Box$

\end{proof}
\noindent \textbf{Problem 2.} Under what condition the  action \eqref{freeact} becomes transitive?

\section{Split extensions of Rota-Baxter groups}
In this section, we study the special case where short exact sequence of Rota-Baxter groups splits as a sequence of groups and provide some examples.

 Let $(H, R_H)$ and $(I, R_I)$ be Rota-Baxter groups and $\mathcal{E} := 0 \to I \stackrel{i}{\to}  E \stackrel{\pi}{\to} H \to 0$ be  an extension Rota-Baxter groups in which $\mathcal{E}$ splits as an extension of groups. That is, there exists a unique st-section $s: H  \rightarrow E$ such that $s$ is a group homomorphism. 
By putting $\tau(h_1, h_2)=0$ in \eqref{NA parent 2}, we get
\begin{align}\label{split condn final}
& \mu_{R_H(h_2)}\Big(g(h_1)R_I\big(i_{g(h_1)^{-1}}(\mu_{R_H(h_1)}(y_1))\big)\Big) g(h_2)R_I\big(i_{g(h_2)^{-1}}(\mu_{R_H(h_2)}(y_2))\big)\notag \\
&= g(h_1 \circ_{R_H} h_2) R_I\Big( i_{g(h_1 \circ_{R_H} h_2)^{-1}} \big( \mu_{R_H(h_1 \circ_{R_H} h_2)}(z)\big) \Big),
\end{align}
where  $z$ is given by
 \begin{align}\label{final z condn}
z=& \mu_{R_H(h_1) h_2 R_H(h_1)^{-1} }\Big(R_I\big(i_{g(h_1)^{-1}}(\mu_{R_H(h_1)}(y_1))\big)\Big)   \mu_{h_2 R_H(h_1)^{-1}}\bigg(y_2 \Big(R_I\big(i_{g(h_1)^{-1}}(\mu_{R_H(h_1)}(y_1))\big) \Big)^{-1} g(h_1)^{-1} \bigg).
\end{align}
\begin{thm}\label{splitgrp}
Let $(H, R_H)$ and $(I, R_I)$ be two Rota-Baxter groups. Let $\mu$ be an anti-homomorphism from $H$ to $Aut(I)$ and $g$ is a map from $H$ to $I$  such that $g(0)=0$ and together  $\mu$ and $g$ satisfy \eqref{split condn final} for all $h_1, h_2 \in H$ and $y_1, y_2 \in I$. Then the map $R: H \times I \rightarrow H \times I$ given by
\begin{align}\label{split RB operator}
R(h, y)=\Big(R_H(h),  g(h)  R_I\big(i_{g(h)^{-1}}(\mu_{R_H(h)}(y))\big)\Big)
\end{align}
defines a Rota-Baxter operator on the semi-direct product $H \ltimes_{\mu} I$, where the group operation in $H \ltimes_{\mu} I$ is given by
\begin{align*}
(h_1, y_1) (h_2, y_2) =&(h_1 h_2, \mu_{h_2}(y_1) y_2).
\end{align*}
\end{thm}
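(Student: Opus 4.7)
The plan is to verify directly that the map $R$ defined in \eqref{split RB operator} satisfies the Rota-Baxter identity
$$R(a) R(b) = R\big(a\, R(a)\, b\, R(a)^{-1}\big)$$
for all $a=(h_1,y_1)$ and $b=(h_2,y_2)$ in $H\ltimes_\mu I$. The key observation is that the long computation in Section~5 that produced \eqref{NA parent 2} from $R_E(s(h_1)y_1)R_E(s(h_2)y_2)$ did not depend on the Rota-Baxter structure on $E$ beyond the extension data; it only used that $\mu$ is the conjugation action, that $\tau$ is the factor set of a section, and that $R_E(s(h)y)$ has the form \eqref{RB oper}. Specializing to $\tau \equiv 0$ recovers exactly the identity \eqref{split condn final}. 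Hence the present theorem is essentially the converse of what was done there, and one needs only to run that calculation backwards in the semidirect product $H\ltimes_\mu I$.

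First I would expand the left-hand side $R(h_1,y_1)R(h_2,y_2)$ using the semidirect product multiplication and the definition of $R$. The first coordinate is $R_H(h_1)R_H(h_2) = R_H(h_1\circ_{R_H}h_2)$, and the second coordinate is precisely the left-hand side of \eqref{split condn final}, namely
$$\mu_{R_H(h_2)}\big(g(h_1) R_I(i_{g(h_1)^{-1}}(\mu_{R_H(h_1)}(y_1)))\big)\, g(h_2) R_I(i_{g(h_2)^{-1}}(\mu_{R_H(h_2)}(y_2))).$$

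Next, I would compute the right-hand side. Using $(a,x)^{-1}=(a^{-1},\mu_{a^{-1}}(x)^{-1})$ and the semidirect product formula, the element $(h_1,y_1)R(h_1,y_1)(h_2,y_2)R(h_1,y_1)^{-1}$ has first coordinate $h_1 R_H(h_1) h_2 R_H(h_1)^{-1}$, so applying $R$ gives first coordinate $R_H(h_1\circ_{R_H}h_2)=R_H(h_1)R_H(h_2)$, matching the left-hand side. The second coordinate, after carefully propagating the inner automorphisms and $R_I$ through the semidirect product multiplication, takes the form $g(h_1\circ_{R_H}h_2)R_I\bigl(i_{g(h_1\circ_{R_H}h_2)^{-1}}(\mu_{R_H(h_1\circ_{R_H}h_2)}(z))\bigr)$ with $z$ given by \eqref{final z condn}. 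Thus the Rota-Baxter identity for $R$ becomes precisely the hypothesis \eqref{split condn final}.

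The main obstacle is bookkeeping: cleanly tracking how conjugation by $g(h_1)$, conjugation in the semidirect product, and the operator $R_I$ interact as one propagates the $\mu_{R_H(h_1)^{-1}}$ factors coming from $R(h_1,y_1)^{-1}$ across the product. Because $\tau\equiv 0$ collapses all the extra $\tau$-terms appearing in the derivation of \eqref{NA parent 2} in Section~5, the bookkeeping is strictly simpler here, and both sides of the identity can be read off as the two sides of \eqref{split condn final} without any further manipulation.
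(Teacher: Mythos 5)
Your proposal is correct and follows essentially the same route as the paper's own proof: a direct expansion of $R(h_1,y_1)R(h_2,y_2)$ and of $R\big((h_1,y_1)R(h_1,y_1)(h_2,y_2)R(h_1,y_1)^{-1}\big)$ in the semidirect product, identifying the two resulting second coordinates with the two sides of \eqref{split condn final} (with the inner conjugated element's second coordinate playing the role of $z$ in \eqref{final z condn}). Your added remark that this is the $\tau\equiv 0$ specialization of the Section~5 computation is exactly how the paper itself arrives at \eqref{split condn final}, so nothing is missing.
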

\begin{proof}
To show that $R$ defined in \eqref{split RB operator} is a Rota-Baxter operator, we expand both sides one by one. Consider, 
\begin{align}\label{first split condn}
& R(h_1, y_1)R(h_2, y_2)\notag \\
=& \Big(R_H(h_1),  g(h_1)  R_I\big(i_{g(h_1)^{-1}}(\mu_{R_H(h_1)}(y_1))\big)\Big)\Big(R_H(h_2),  g(h_2)  R_I\big(i_{g(h_2)^{-1}}(\mu_{R_H(h_2)}(y_2))\big)\Big)\notag \\
=& \Big(R_H(h_1)R_H(h_2),  \mu_{R_H(h_2)}\big(g(h_1) R_I(i_{g(h_1)^{-1}}(\mu_{R_H(h_1)}(y_1)))\big) g(h_2)  R_I\big(i_{g(h_2)^{-1}}(\mu_{R_H(h_2)}(y_2))\big)\Big).
\end{align}
On the other side, we have
\begin{align}\label{second condn split}
&R\big((h_1, y_1)R(h_1, y_1)(h_2, y_2)R(h_1, y_1)^{-1}\big)\notag \\
=&R\Big((h_1, y_1)\big(R_H(h_1),  g(h_1)  R_I(i_{g(h_1)^{-1}}(\mu_{R_H(h_1)}(y_1))\big)(h_2, y_2)\notag \\
&(R_H(h_1)^{-1},  \mu_{R_H(h_1)^{-1}}\big(g(h_1)  R_I(i_{g(h_1)^{-1}}(\mu_{R_H(h_1)}(y_1)))\big)^{-1} \Big)\notag\\
=&R\Big(h_1 R_H(h_1), \mu_{R_H(h_1)}\big(g(h_1)  R_I(i_{g(h_1)^{-1}}(\mu_{R_H(h_1)}(y_1)))\big)\notag\\
&(h_2 R_H(h_1)^{-1}, \mu_{R_H(h_1)^{-1}}(y_2)\mu_{R_H(h_1)^{-1}}\big(g(h_1)  R_I(i_{g(h_1)^{-1}}(\mu_{R_H(h_1)}(y_1)))\big)^{-1}\Big)\notag\\ 
=& R\Big(h_1R_H(h_1)h_2 R_H(h_1)^{-1},\mu_{h_2R_H(h_1)^{-1}}\big(\mu_{R_H(h_1)}(g(h_1)  R_I(i_{g(h_1)^{-1}}(\mu_{R_H(h_1)}(y_1)))))\big)\notag \\
& \mu_{R_H(h_1)^{-1}}(y_2)\mu_{R_H(h_1)^{-1}}\big(g(h_1)  R_I(i_{g(h_1)^{-1}}(\mu_{R_H(h_1)}(y_1)))\big)^{-1}\Big)\notag\\
=&\Big(R_H(h_1 \circ_{R_H} h_2),  g(h_1 \circ_{R_H} h_2)R_I(i_{g(h_1 \circ_{R_H} h_2)^{-1}}\big(\mu_{R_H(h_1 \circ_{R_H} h_2)}(\mu_{h_2R_H(h_1)^{-1}}\big(\mu_{R_H(h_1)}(g(h_1) \notag \\
& R_I(i_{g(h_1)^{-1}}(\mu_{R_H(h_1)}(y_1)))))))\Big).
\end{align}
Using \eqref{split condn final} in \eqref{second condn split},  it follows   that  \eqref{first split condn} and \eqref{second condn split} are equal. This proves that the map defined in \eqref{split RB operator} is a Rota-Baxter operator.
We denote this  Rota-Baxter group by $(H, I, \mu, g, R)$.
\hfill   $\Box$

\end{proof}

\begin{thm}\label{splthm}
Let $(H, R_H)$ and $(I, R_I)$ be two Rota-Baxter groups.  Then the Rota-Baxter group $(H, I, \mu, g, R)$ defines a Rota-Baxter extension of $(H, R_H)$ by $(I, R_I)$ such that it splits as an extension of groups.
\end{thm}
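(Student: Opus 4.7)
The plan is to use Theorem~\ref{splitgrp} as a black box for the substantive statement that the map $R$ of \eqref{split RB operator} is a Rota-Baxter operator on $H \ltimes_\mu I$. What remains is to assemble three formal pieces: exactness of the natural sequence
$$0 \to I \stackrel{i}{\to} H \ltimes_\mu I \stackrel{\pi}{\to} H \to 0,$$
with $i(y) = (0, y)$ and $\pi(h, y) = h$, which is standard for semidirect products; verification that $i$ and $\pi$ are Rota-Baxter morphisms; and production of a group-theoretic section.

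First I would record the necessary normalizations. Setting $x = y = 0$ in the Rota-Baxter identity forces $R_H(0) = 0$ and $R_I(0) = 0$. Since $\mu$ is an anti-homomorphism, $\mu_0 = \Id_I$, and the standing hypothesis $g(0) = 0$ in Theorem~\ref{splitgrp} then gives $i_{g(0)^{-1}} = \Id_I$. A direct computation from the definition of $R$ therefore yields
$$R(i(y)) = R(0, y) = \bigl(R_H(0),\ g(0)\, R_I(i_{g(0)^{-1}}(\mu_{R_H(0)}(y)))\bigr) = (0, R_I(y)) = i(R_I(y)),$$
so $i$ intertwines $R_I$ and $R$. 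For the projection, the first coordinate of $R(h,y)$ is literally $R_H(h)$, so $\pi R = R_H \pi$ on the nose. This establishes that $(H, I, \mu, g, R)$ is a Rota-Baxter extension of $(H, R_H)$ by $(I, R_I)$.

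For the splitting, I would take $s : H \to H \ltimes_\mu I$ defined by $s(h) = (h, 0)$. Clearly $\pi \circ s = \Id_H$, and the semidirect product rule gives $s(h_1) s(h_2) = (h_1, 0)(h_2, 0) = (h_1 h_2, \mu_{h_2}(0) \cdot 0) = s(h_1 h_2)$, so $s$ is a group homomorphism and the extension splits as an extension of groups.

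Because Theorem~\ref{splitgrp} has already absorbed the only non-trivial content, namely the compatibility condition \eqref{split condn final} required to make $R$ a Rota-Baxter operator, no serious obstacle remains; the proof is pure assembly. The one point worth flagging explicitly is that the section $s$ is in general not a morphism of Rota-Baxter groups: $R(s(h)) = (R_H(h), g(h))$ whereas $s(R_H(h)) = (R_H(h), 0)$, so the two agree only when $g \equiv 0$. Hence the splitting occurs at the level of groups but typically not at the level of Rota-Baxter groups, which is exactly what the statement asserts and which clarifies the role of $g$ as the datum measuring the genuine Rota-Baxter deformation of the trivial split extension.
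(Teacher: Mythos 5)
Your proposal is correct and follows essentially the same route as the paper's proof: take the natural injection and projection, verify $R\circ i = i\circ R_I$ and $\pi\circ R = R_H\circ\pi$ directly from the formula for $R$, and exhibit the splitting section $s(h)=(h,0)$. The only difference is that you spell out the normalization facts ($R_H(0)=0$, $R_I(0)=0$, $\mu_0=\Id$, $g(0)=0$) that the paper leaves implicit, and you add a correct clarifying remark that $s$ splits the extension only at the level of groups, not of Rota-Baxter groups.
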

\begin{proof} 
Let $$\mathcal{E}(\mu, g):= 0 \to I \stackrel{i}{\to}  (H, I, \mu, g, R) \stackrel{\pi}{\to} H \to 0$$
be a sequence of groups,  where $i$ and $\pi$ are the natural injection and projection, respectively.
We have $$R(i(y))=R(0,y)= (0,R_I(y))  =i(R_I(y)),$$  and 
$$\pi(R(h,y))=\pi \Big(R_H(h),  g(h)  R_I\big(i_{g(h)^{-1}}(\mu_{R_H(h)}(y))\big)\Big)=R_H(\pi(h, y)),$$
for all $h \in H$ and $y \in I$. This shows that $\mathcal{E}(\mu, g)$ is a Rota-Baxter extension.  Let $s: H \rightarrow (H, I, \mu, g, R)$ given by $s(h)=(h, 0)$. It is easy to verify that $s$ is a  st-section of $\mathcal{E}(\mu, g)$ and $s$ is also a group homomorphism.  This shows that $\mathcal{E}(\mu, g)$ as an extension of groups splits.
\hfill   $\Box$

\end{proof}
\begin{thm}
Let $\mathcal{E}:  0 \to I \stackrel{i}{\to}  E \stackrel{\pi}{\to} H \rightarrow 0$ be an extension of Rota-Baxter groups such that it splits as an extension of groups.  Then $\mathcal{E}$ is equivalent to $\mathcal{E}(\mu, g)$ for some $g: H \rightarrow I$ and anti-homomorphism $\mu : H \rightarrow \Aut(I)$.
\end{thm}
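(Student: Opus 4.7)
The plan is to extract the data $(\mu, g)$ directly from the assumed group-theoretic splitting and then show that the natural identification of $E$ with the semi-direct product $H \ltimes_\mu I$ is an isomorphism of Rota-Baxter groups.

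First, let $s : H \rightarrow E$ be a st-section of $\mathcal{E}$ which is a group homomorphism (provided by the splitting hypothesis). Define $\mu : H \rightarrow \Aut(I)$ by $\mu_h(y) = s(h)^{-1} y s(h)$ as in \eqref{actions}; since $s$ is a homomorphism, $\mu$ is an honest anti-homomorphism (the inner-automorphism correction term via $\tau$ disappears because the 2-cocycle $\tau(h_1,h_2) = s(h_1 h_2)^{-1} s(h_1) s(h_2)$ is identically trivial). Next, since $\pi \circ R_E = R_H \circ \pi$, for each $h \in H$ the element $R_E(s(h))$ projects to $R_H(h)$, so there is a unique $g(h) \in I$ with $R_E(s(h)) = s(R_H(h)) g(h)$; this defines $g : H \rightarrow I$ as in \eqref{g map}.

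Because $\tau \equiv 0$, the general compatibility identity \eqref{NA parent 2} collapses precisely to the split compatibility \eqref{split condn final}. Hence the pair $(\mu, g)$ satisfies the hypotheses of Theorem \ref{splitgrp}, and by Theorem \ref{splthm} we obtain a genuine Rota-Baxter extension $\mathcal{E}(\mu, g)$ of $(H, R_H)$ by $(I, R_I)$ with underlying group $H \ltimes_\mu I$ and Rota-Baxter operator $R$ given by \eqref{split RB operator}.

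It remains to construct an equivalence. Every element of $E$ is uniquely of the form $s(h) y$ with $h \in H$ and $y \in I$, so the map
\[
\psi : E \longrightarrow H \ltimes_\mu I, \qquad \psi(s(h) y) := (h, y)
\]
is a bijection. A direct computation, using $s(h_1)s(h_2) = s(h_1 h_2)$ and the definition of $\mu$, shows $\psi$ is a group isomorphism, and manifestly $\psi \circ i = i$ and $\pi \circ \psi = \pi$. To see that $\psi$ intertwines the Rota-Baxter operators, compute $R_E(s(h) y)$ using the specialization of \eqref{RB oper} in the split case: since $\tau = 0$ we obtain
\[
R_E(s(h) y) = s(R_H(h))\, g(h)\, R_I\!\left(i_{g(h)^{-1}}\bigl(\mu_{R_H(h)}(y)\bigr)\right),
\]
which is exactly $\psi^{-1}$ applied to $R(h, y)$ as defined by \eqref{split RB operator}. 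Therefore $\psi \circ R_E = R \circ \psi$, and $\psi$ is an isomorphism of Rota-Baxter groups fitting into the equivalence diagram.

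The only delicate point is confirming that the master identity \eqref{NA parent 2} truly does reduce to \eqref{split condn final} when $\tau \equiv 0$ and that the formula for $R_E$ simplifies correspondingly; once these simplifications are in hand the verification is purely formal. All other steps are routine bookkeeping from the earlier sections, so no genuine obstacle remains.
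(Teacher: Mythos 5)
Your proposal is correct and follows essentially the same route as the paper: extract $(\mu,g)$ from the splitting homomorphism $s$, observe that \eqref{NA parent 2} with $\tau\equiv 0$ yields \eqref{split condn final} so that Theorems \ref{splitgrp} and \ref{splthm} produce $\mathcal{E}(\mu,g)$, and then check that $s(h)y \mapsto (h,y)$ is an isomorphism of Rota-Baxter groups making the diagram commute. The paper leaves the final verification as ``easy to verify''; your write-up simply fills in those routine computations explicitly.
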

\begin{proof}
Let $\mathcal{E}:  0 \to I \stackrel{i}{\to}  E \stackrel{\pi}{\to} H \rightarrow 0$ be an extension of Rota-Baxter groups such that it splits as an extension of groups. Define $\mu$ and $g$ corresponding to $s$ as defined in \eqref{actions} and \eqref{g map}.   Then we can define a Rota-Baxter extension $\mathcal{E}(\mu, g)$ of $(H, R_H)$ by $(I, R_I)$ following Theorem \ref{splthm}.  It is easy to verify that  the map $$\phi : (E, R_E) \rightarrow  \mathcal{E}(\mu, g)$$ given by $\phi(s(h) y)=(h, y)$ is an isomorphism of Rota-Baxter groups and the following diagram commutes
$$\begin{CD}
0 @>>> I @>i>> E @>{{\pi} }>> H @>>> 0\\ 
&& @V{\text{Id}} VV@V{\phi} VV @V{\text{Id} }VV \\
0 @>>> I @>i^\prime>>   \mathcal{E}(\mu, g) @>{{\pi^\prime} }>> H  @>>> 0.
\end{CD}$$
\hfill   $\Box$

\end{proof}
\begin{thm}
	Let  $\mathcal{E}_1 := 0 \to I \stackrel{}{\to}  E \stackrel{\pi_1}{\to} H \to 0,$ be an extension of skew left braces corresponding to some fixed skew left brace structure on $I$, $E$ and $H$ such that additive groups of $I$, $E$ and $H$ are complete groups. Then there exist Rota Baxter operators  $R_I, R_E$ and $R_H$ on  $I$, $E$ and $H$, respectively, such that the sequence $\mathcal{E}_2 := 0 \to(I, R_I) \stackrel{}{\to}  (E, R_E) \stackrel{\pi_1}{\to} (H, R_H) \to 0$ defines  an extension of Rota-Baxter groups and the skew brace extension defined by $\mathcal{E}_2$ is same as $\mathcal{E}_1$.
\end{thm}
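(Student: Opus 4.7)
The plan is to define each Rota-Baxter operator $R_G$ directly from the skew brace structure, for $G \in \{I, E, H\}$, by exploiting completeness of $(G, +)$ to lift the $\lambda$-map of the brace into $G$ itself. Recall that for any skew left brace $(G, +, \circ)$, the map $\lambda^G : (G, \circ) \to \Aut(G, +)$ defined by $\lambda^G_x(y) := -x + x \circ y$ is a group homomorphism. Completeness of $(G, +)$ means $\Aut(G, +) = \Inn(G, +)$ and $Z(G, +) = 0$; therefore for each $x \in G$ there exists a unique element $R_G(x) \in G$ with $R_G(x) + y - R_G(x) = \lambda^G_x(y)$ for all $y \in G$. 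This defines a set map $R_G : G \to G$; apply the construction to $G = I, E, H$.

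First I would verify that each $R_G$ is a Rota-Baxter operator of weight $1$. Because $\lambda^G$ is a homomorphism of $(G, \circ)$ into $\Aut(G, +)$, one has $\lambda^G_{x \circ y} = \lambda^G_x \lambda^G_y$, and hence both $R_G(x \circ y)$ and $R_G(x) + R_G(y)$ implement this same inner automorphism; uniqueness forces $R_G(x \circ y) = R_G(x) + R_G(y)$. Since by construction $x \circ y = x + R_G(x) + y - R_G(x)$, this is precisely the additive form of the Rota-Baxter identity of weight $1$.

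Next I would check compatibility with the extension maps. For $x \in E$ and any $h \in H$, pick a lift $e \in E$ with $\pi_1(e) = h$. Using that $\pi_1$ is a homomorphism with respect to $+$ and respects $\circ$,
\begin{align*}
\pi_1(R_E(x)) + h - \pi_1(R_E(x)) &= \pi_1\bigl(R_E(x) + e - R_E(x)\bigr) = \pi_1(\lambda^E_x(e)) = \lambda^H_{\pi_1(x)}(h),
\end{align*}
so $\pi_1(R_E(x))$ implements $\lambda^H_{\pi_1(x)}$ by conjugation on $(H, +)$; completeness of $(H, +)$ forces $\pi_1 \circ R_E = R_H \circ \pi_1$. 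Specialising to $y \in I \subseteq E$ gives $\pi_1(R_E(y)) = R_H(0) = 0$, whence $R_E(y) \in I$. Since $I$ is a sub-brace of $E$, $\lambda^E_y|_I = \lambda^I_y$, so $R_E(y) \in I$ implements $\lambda^I_y$ by conjugation on $(I, +)$; completeness of $(I, +)$ forces $R_E(y) = R_I(y)$, establishing $R_E \circ i = i \circ R_I$. Finally, the equality of skew brace extensions is automatic: by construction $x \circ_{R_G} y = x + R_G(x) + y - R_G(x) = x + \lambda^G_x(y) = x \circ y$, so $\mathcal{E}_2$ recovers the original $\mathcal{E}_1$.

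The main obstacle is conceptual rather than technical: one must recognise that the Rota-Baxter identity is exactly the statement that $R_G$ is a (set-theoretic) lift of the homomorphism $\lambda^G$ carrying $\circ$ to $+$, and that completeness of $(G, +)$ is precisely what makes such a lift exist and be unique. Once that is observed, every compatibility in the exact sequence reduces to the uniqueness of the element implementing a given inner automorphism, first in $(H, +)$ for the projection and then in $(I, +)$ for the inclusion.
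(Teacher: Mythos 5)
Your proof is correct, and it rests on the same two pillars as the paper's --- completeness produces Rota--Baxter operators inducing the given brace structures, and triviality of centres forces all the compatibilities --- but the execution differs in two ways worth recording. First, where the paper invokes the Bardakov--Gubarev fact (``every skew brace structure on a complete group is induced by a Rota--Baxter operator'') as a black box, once each for $E$, $I$ and $H$, you re-derive it inline: completeness makes each $\lambda^G_x$ inner with a unique implementing element, which you \emph{define} to be $R_G(x)$, so that the Rota--Baxter identity and the equality $\circ_{R_G} = \circ$ hold by construction and uniqueness. This makes the argument self-contained. Second, and more substantively, you reverse the order of the two compatibility checks, and this matters. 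The paper first argues $R_E|_I = R_I$: from $R_I(x)\,y\,R_I(x)^{-1} = R_E(x)\,y\,R_E(x)^{-1}$ for all $y \in I$ it concludes $R_I(x)R_E(x)^{-1} \in Z(I)$, which is trivial. But at that point $R_E(x)$ is only known to be an element of $E$, so this identity only places $R_E(x)^{-1}R_I(x)$ in the centralizer $C_E(I)$ of $I$ in $E$, which can be strictly larger than $Z(I)$ (for $E = I \times H$ it contains all of $H$); concluding membership in $Z(I)$ presupposes $R_E(x) \in I$, which is exactly what is being proved. You avoid this circularity by first proving $\pi_1 \circ R_E = R_H \circ \pi_1$ via uniqueness in $H$, then deducing $R_E(I) \subseteq \ker\pi_1 = I$ from $R_H(0)=0$, and only then applying uniqueness inside $I$. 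So your route not only proves the theorem but also patches a small gap in the paper's own argument; what the paper's version buys in exchange is brevity, by outsourcing the construction of the operators to the cited result.
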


\begin{proof}
	Let $R_E$ be a Rota-Baxter operator on $E$ which induces skew left  brace structure on $E$, that is under consideration in  $\mathcal{E}_1$.  As $I$ is an ideal of the skew left brace $E$ that means $I$ is itself a skew left brace. For $ x, y \in I$, we have
	\begin{align*}
		x \circ_{R_E}  y =&  x R_E(x)  y R_E(x)^{-1}.
	\end{align*}
	As $I$ is a complete group and every skew brace structure on complete group is induced by a  Rota-Baxter operator, thus we have $R_I : I \rightarrow  I$ which induces the same skew brace structure on $ I.$ We have 
	\begin{align*}
		x \circ_{R_I} y =& x \circ_{R_E}  y,\\
		R_I(x)  y R_I(x)^{-1}=& R_E(x) y R_E (x)^{-1}.
	\end{align*}
	Since, the  above equation is true for all $x, y \in I$,  we have $R_I(x)R_E( x)^{-1} \in \Z( I)$, which is trivial.  Hence $R_I(x)=R_E(x)$ for all $x \in I$, this shows that $ I$ is invariant under $R_E$. Now from $\mathcal{E}_1$   we have $E/I \cong H$ as skew left brace and skew left brace structure on $E/I$ is induced by Rota-Baxter operator $\overline{R}_E: E/I \rightarrow E/I$, defined by 
	$ \overline{R}_E(\overline{x})=\overline{R_E(x) }$, where $\overline{x}$ denotes the image of $x \in E$ in $E/I$  under natural projection. Let skew left brace structure on $H$ is induced by some Rota-Baxter operator $R_H$. We have
	\begin{align*}
		\overline{\pi}_1 (\overline{x_1 \circ_{\overline{R}_E} x_2})= & \overline{\pi}_1 (\overline{x}_1 \circ_{\overline{R}_E} \overline{x}_2),\\
		\overline{\pi}_1(\overline{x_1 R_E(x_1) x_2 R_E(x_1)^{-1} }) = & \overline{\pi}_1 (\overline{x}_1) \circ_{R_H}  \overline{\pi}_1( \overline{x}_2),\\
		\overline{\pi}_1(\overline{x}_1 ) )\overline{\pi}_1(\overline{R_E(x_1)}) \overline{\pi}_1(\overline{x_2}) \overline{\pi}_1(\overline{R_E(x_1)})^{-1}=&  \overline{\pi}_1 (\overline{x}_1) R_H(\overline{\pi}_1 (\overline{x}_1)) \overline{\pi}_1 (\overline{x}_2) R_H(\overline{\pi}_1 (\overline{x}_1))^{-1}.
	\end{align*}
	Using that additive group of skew left brace  $H$ is complete group, we have $\pi_1 R_E=R_H \pi_1$. This shows that $\mathcal{E}_2 := 0 \to(I, R_I) \stackrel{}{\to}  (E, R_E) \stackrel{\pi_1}{\to} (H, R_H) \to 0$ is an extension of Rota-Baxter groups.
	\hfill $\Box$

\end{proof}
Next, we give some examples of Rota-Baxter operators on group which comes through the extensions.  We have used  GAP \cite{GAP}  to compute these examples.

\begin{example}
 The symmetric group $S_3$ over $\{1, 2,3 \}$ have total $8$ Rota-Baxter operators and all of them are extension of homomorphisms $($Rota-Baxter operator is just homomorphism over abelian group$)$  on $I=\langle(1,2,3) \rangle $ and $H=S_3/I  $, which is isomorphic to cyclic group of order $2$. Here we list all of them except the trivial operator \

$1)$ $R_1(x)=e$ for all $x \in I $ and $R_1(y)=(2,3)$ for all $y \notin I$.\

$2)$ $R_2(x)=e$ for all $x \in I $ and $R_2(y)=(1,3)$ for all $y \notin I$.\

$3)$ $R_3(x)=e$ for all $x \in I $ and $R_3(y)=(1,2)$ for all $y \notin I$.\

$4)$ $R_4(x)=x^2$ for all $x \in I $ and $R_4(2,3)=e, R_4(1,3)=(1,2,3), R_4(1,2)=(1,3,2)$.\

$5)$ $R_5(x)=x^2$ for all $x \in I $ and $R_5(2,3)=(1,3,2), R_5(1,3)=e, R_5(1,2)=(1,2,3)$.\

$6)$ $R_6(x)=x^2$ for all $x \in I $ and $R_6(2,3)=(1,2,3), R_6(1,3)=(1,3,2), R_6(1,2)=e$.\

$7)$ $R_7(x)=x^2$ for all $x \in I $ and $R_7(y)=y$  for all $y \notin I$.
\end{example}
\begin{example}
Let $D_4$ be the dihedral group of order $8$ as a subgroup of $S_4$ that is $D_4=\langle(1,2,3,4), (1,4)(2,3) \rangle$.  We have $I=\langle(1,2,3,4) \rangle$ and $H=D_4/I \cong $ cyclic group of order $2$. There are total $52$ Rota-Baxter operators on $D_4$. We list some of them which are extension of homomorphism on $I$.   

$1)$ $R_1(x)=e$ for all $x \in I $ and $R_1(y)=(1,3)(2,4)$ for all $y \notin I$.\

$2)$ $R_2(x)=e$ for all $x \in I $ and $R_2(y)=(2,4)$ for all $y \notin I$.\

$3)$ $R_3(x)=e$ for all $x \in I $ and $R_3(y)=(1,3)$ for all $y \notin I$.
\end{example}

\begin{example}
Let $Q_8=\langle (1,2,3,4)(5,6,7,8), (1,5,3,7)(2,8,4,6)  \rangle $ be the quaternion group of order $8$ as a subgroup of $S_8$. We have computed that $Q_8$ have total $8$ Rota-Baxter operators.  Let $I=\langle (1,2,3,4)(5,6,7,8) \rangle$, we list few which are extension of  homomorphism on $I$.

$ 1)$ $R_1(x)=e $ for all $x \in I$ and $R_1(y)=(1,3)(2,4)(5,7)(6,8)$ $y \notin I$.\

$ 2)$ $R_2(x)=x $ for all $x \in Q_8$ except $x=(1,6,3,8)(2,5,4,7), (1,8,3,6)(2,7,4,5)$ and 
 $R_2((1,6,3,8)(2,5,4,7))=  (1,8,3,6)(2,7,4,5), R_2((1,8,3,6)(2,7,4,5))=(1,6,3,8)(2,5,4,7)$.\
\end{example}

\section{Fundamental sequence of Well's}
 Given a  Rota-Baxter extension $\mathcal{E} : 0  \rightarrow I \stackrel{i}{\rightarrow} E \stackrel{\pi}{\rightarrow} H \rightarrow 0,$ here we establish various group actions on the set  $\Ext_{\mu}(H, I)$ and construct an exact sequence connecting certain automorphism groups of $(E, R_E)$ with $\Ho_{RBE}^2(H, I)$.

Let $\Aut(H, R_H)$ denotes the automorphism group of the Rota-Baxter group $(H, R_H)$. We define an  action of  $\Aut(H, R_H) \times \Aut(I, R_I)$ on $\Ext(H, I)$ as follows. For a pair  $(\phi, \theta) \in \Aut(H, R_H) \times \Aut(I, R_I)$ of Rota-Baxter automorphisms,  we  define a new extension
$$\mathcal{E}^{(\phi, \theta)} : 0 \rightarrow I \stackrel{i\theta}{\longrightarrow} E \stackrel{\phi^{-1} \pi}{\longrightarrow} H \rightarrow 0.$$ 
Let 
$\mathcal{E}_1:  0  \rightarrow I \stackrel{i}{\rightarrow} E_1 \stackrel{\pi}{\rightarrow}  H \rightarrow 0$ and $\mathcal{E}_2:  0  \rightarrow I \stackrel{i'}{\rightarrow} E_2 \stackrel{\pi'}{\rightarrow}  H \rightarrow 0$
be two equivalent extensions of $(H, R_H)$ by $(I, R_I)$. Then it is not difficult to show that  the extensions $\mathcal{E}_1^{(\phi, \theta)}$ and $\mathcal{E}_2^{(\phi, \theta)}$ are also equivalent for any $(\phi, \theta) \in \Aut(H, R_H) \times \Aut(I, R_I)$. Thus, for a given $(\phi, \theta) \in \Aut(H, R_H) \times \Aut(I, R_I)$, we can define a map from $\Ext(H, I)$ to itself given by 
\begin{equation}\label{act1 sb}
[\mathcal{E}] \mapsto  [ \mathcal{E}^{(\phi, \theta)}].
\end{equation}
If $\phi$ and $\theta$ are identity automorphisms, then obviously $\mathcal{E}^{(\phi, \theta)} = \mathcal{E}$. It is also easy to see that  
$$[\mathcal{E}]  ^{(\phi_1, \theta_1) (\phi_2, \theta_2)}=  \big([\mathcal{E}]^{(\phi_1, \theta_1)}\big)^{(\phi_2, \theta_2)}.$$
We conclude that the association \eqref{act1 sb} gives an action of the group $\Aut(H, R_H) \times \Aut(I,  R_I)$  on the set $\Ext(H, I)$.

As we know that $$\Ext(H, I) = \bigsqcup_{\bar{\mu}} \Ext_{\bar{\mu}}(H, I).$$

 \emph{Let $(I, R_I)$ be a $(H, R_H)$-module by a  fixed action  $\mu : H \rightarrow \Aut(I) $.}  Let $\C_{\mu}$ denote the stabilizer of $\Ext_{\mu}(H, I)$ in $\Aut(H, R_H) \times \Aut(I, R_I)$. More explicitly,
$$\C_{\mu} = \{ (\phi, \psi) \in \Aut(H, R_H) \times \Aut(I, R_I) \mid \mu_h=\psi^{-1} \mu_{\phi (h)} \psi \}.$$
Notice that  $\C_\mu$ is a subgroup of $\Aut(H, R_H) \times \Aut(I, R_I)$, and it  acts on  $\Ext_{\mu}(H, I)$ by the same rule as given in \eqref{act1 sb}. \

Next we consider an action of $ \C_{\mu}$ on $\Ho^2_{RBE}(H, I)$.
Let $(\phi, \theta) \in \Aut(H, R_H) \times \Aut(I, R_I)$ and $f \in \Fun(H^n, I)$, where $n \ge 1$ be an integer. Define $f^{(\phi, \psi)} : H^n \to I$ by setting
$$f^{(\phi, \psi)}(h_1, h_2, \ldots, h_n) := \psi^{-1}\big(f(\phi(h_1), \phi(h_2), \ldots, \phi(h_n))\big).$$ 
It is not difficult to see that the group $\Aut(H) \times \Aut(I)$ acts on  the group $\Fun(H^n, I)$ as well as on the group $C^{n}(H, I)$, by automorphisms, given by the association
\begin{equation}\label{act2 sb}
f \mapsto f^{(\phi, \psi)}.
\end{equation}
It is also obvious that  $\C_{\mu}$ acts on both of these sets. We are interested in the action of $\C_{ \mu}$ on $\Ho_{RBE}^2(H, I)$. The association \eqref{act2 sb} induces an action of $\C_{\mu}$ on $TC^2_{RBE} = C^2(H, I) \oplus C^1(H, I)$ by setting 
\begin{equation}\label{act3 sb}
(\tau, g) \mapsto \big(\tau^{(\phi, \psi)}, g^{(\phi, \psi)}\big).
\end{equation}
\begin{lemma}\label{lemma-act3 sb}
For $(\phi, \psi) \in \C_{\mu}$,  the following hold:

$(i)$ If $(\tau, g) \in \Z^2_{RBE}(H,I)$, then $\big(\tau^{(\phi, \psi)}, g^{(\phi, \psi)}\big) \in \Z^2_{RBE}(H,I)$.

$(ii)$ If $(\tau, g) \in \B^2_{RBE}(H,I)$, then $\big(\tau^{(\phi, \psi)}, g^{(\phi, \psi)}\big) \in \B^2_{RBE}(H,I)$.

Hence, the association \eqref{act3 sb} gives an action of $\C_{\mu}$ on $\Ho_{RBE}^2(H, I)$ by automorphisms, if we define
$$[(\tau, g)]^{(\phi, \psi)} =[\big(\tau^{(\phi, \psi)}, g^{(\phi, \theta)}\big)].$$
\end{lemma}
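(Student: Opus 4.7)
The approach is direct substitution, using three equivariance identities implied by $(\phi,\psi) \in \C_\mu$: (a) $\phi R_H = R_H \phi$ (since $\phi$ is a Rota-Baxter automorphism of $H$), from which $\phi(h_1 \circ_{R_H} h_2) = \phi(h_1) \circ_{R_H} \phi(h_2)$; (b) $\psi R_I = R_I \psi$ (since $\psi$ is a Rota-Baxter automorphism of $I$); and (c) the stabilizer relation $\mu_h = \psi^{-1}\mu_{\phi(h)}\psi$, equivalently $\psi^{-1}\mu_{\phi(h)} = \mu_h \psi^{-1}$.

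For part (i), I would start from the two cocycle identities satisfied by $(\tau,g)$ and evaluate them at $(\phi(h_1), \phi(h_2), \ldots)$ in place of $(h_1, h_2, \ldots)$; the resulting identities, after applying $\psi^{-1}$ throughout, become precisely the cocycle identities for $(\tau^{(\phi,\psi)}, g^{(\phi,\psi)})$. The first identity, $\tau(h_1, h_2 h_3) + \tau(h_2, h_3) - \tau(h_1 h_2, h_3) - \mu_{h_3}(\tau(h_1, h_2)) = 0$, transfers using only $\phi$ being a group homomorphism together with (c). The second identity, which couples $\partial^1(g)$, $\Phi^2(\tau)$, and a twisted $R_I$-term, needs all three equivariances: one uses (b) to slide $\psi^{-1}$ past every $R_I$, (c) to slide it past every $\mu$, and (a) to rewrite each $\phi(R_H(h_i))$ as $R_H(h_i)$ in the transformed expression. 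After these three substitutions, every summand of $\Phi^2(\tau)(\phi(h_1),\phi(h_2))$ matches the corresponding summand of $\Phi^2(\tau^{(\phi,\psi)})(h_1,h_2)$, and similarly for $\partial^1(g)$ and the cross term.

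For part (ii), given $(\tau,g) = \partial^1_{RBE}(\theta) = (\delta^1 \theta, -\Phi^1\theta)$, I would show that $\partial^1_{RBE}(\theta^{(\phi,\psi)}) = (\tau^{(\phi,\psi)}, g^{(\phi,\psi)})$. The first coordinate reduces to the standard equivariance $(\delta^1\theta)^{(\phi,\psi)} = \delta^1(\theta^{(\phi,\psi)})$, which holds by (c) and $\phi$ being a homomorphism. For the second coordinate it suffices to check $\Phi^1(\theta)^{(\phi,\psi)} = \Phi^1(\theta^{(\phi,\psi)})$, which after unwinding the definition of $\Phi^1$ is a one-line consequence of (a), (b), (c). The action property then follows: parts (i)--(ii) make \eqref{act3 sb} descend to a well-defined self-map of $\Ho^2_{RBE}(H,I)$, and this self-map is an automorphism of abelian groups because $f \mapsto f^{(\phi,\psi)}$ is coordinatewise additive on $C^n(H,I) \oplus C^{n-1}(H,I)$, with inverse induced by $(\phi^{-1}, \psi^{-1}) \in \C_\mu$.

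The main obstacle is the length and nesting of the second coordinate of $\Z^2_{RBE}(H,I)$: $\Phi^2(\tau)$ has five separate $\mu$-twisted summands wrapped by $R_I$, and the compatibility with $\partial^1(g)$ further mixes $\mu_{h_2}$ and $R_I$, so the only real challenge is keeping the bookkeeping error-free while propagating $\psi^{-1}$ inward. Once the three equivariances (a)--(c) are in hand, however, every rewrite is a mechanical application of one of them, and no new algebraic input is required.
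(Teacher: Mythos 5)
Your proposal is correct and follows essentially the same route as the paper: for (i) the paper likewise substitutes $\phi(h_1),\phi(h_2)$ into the $2$-cocycle identity \eqref{2-cocycle condn}, applies $\psi^{-1}$, and invokes exactly your three equivariances ($\phi R_H=R_H\phi$, $\psi R_I=R_I\psi$, and the stabilizer relation defining $\C_\mu$); for (ii) it likewise verifies $\tau^{(\phi,\psi)}=\delta^1(\theta^{(\phi,\psi)})$ and $g^{(\phi,\psi)}=-\Phi^1(\theta^{(\phi,\psi)})$, i.e.\ that $\partial^1_{RBE}$ commutes with the $(\phi,\psi)$-action. No substantive difference.
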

\begin{proof} 
$(i)$ It is easy to see that $\tau^{(\phi, \psi)}$ satisfy \eqref{cocycle 2}. Replace $h_1, h_2$ in  \eqref{2-cocycle condn} by $\phi(h_1), \phi(h_2)$ and applying $\psi^{-1}$ both side we have
\begin{align*}\label{abelianparent2}
& \psi^{-1}\Big(g(\phi(h_2))-g(\phi(h_1 \circ_{R_H} h_2)) + \mu_{R_H(\phi(h_2))}(g(\phi(h_1))) -R_I \big(\mu_{\phi( h_1 \circ h_2 R_H(h)^{-1})}(\mu_{h_2}(g(h_1))- g(h_1)\big)\Big) \notag \\
 =&\psi \Big( \tau(R_H(\phi(h_1)), R_H(\phi(h_2)))-R_I \big(\mu_{\phi(h_1 \circ h_2)}(\tau(\phi(h_1R_H(h_1)), \phi(h_2R_H(h_1))^{-1})\notag \\
 &+\mu_{\phi(h_2 R_H(h_1)^{-1})}(\tau(\phi(h_1), R_H(\phi(h_1)))
  + \tau(\phi(h_2), R_H(\phi(h_1))^{-1})-\tau(R_H(\phi(h_1)), R_H(\phi(h_1))^{-1})\big)\Big).
\end{align*}
Now using that $\psi$ and $\phi$ commutes with the operator $R_H$ and $R_I$, respectively and definition of $\C_{\mu}$ we easily see that  $\big(\tau^{(\phi, \psi)}, g^{(\phi, \psi)}\big)$ satisfy \eqref{2-cocycle condn}.

$(ii)$ Let $\tau= \delta^1(\theta)$ and $g=-\Phi(\theta)$ for some $\theta \in TC^1_{RBE}$.
We have 
\begin{align*}
 \tau^{(\phi, \psi)}(h_1, h_2) &=\psi^{-1} \big(\theta(\phi(h_2))-\theta(\psi(h_1 \circ_{R_H} h_2))+\mu_{\phi(R_H(h_2))}(\theta(\phi(h_1))) \big) \delta^1(\theta^{(\phi, \psi)})(h_1, h_2).
\end{align*}
Similarly,  we have
\begin{align*}
g^{(\phi, \psi)}= -\Phi(\theta^{(\phi, \psi)}).
\end{align*} 
\hfill $\Box$

\end{proof}
\begin{remark}
The action of $\C_{\mu}$  on $\Ho^2_{RBE}(H,I)$ as defined in the preceding lemma, can be transferred  on  
$\Ext_{\mu}(H,I)$ through the bijection given in  Theorem  \ref{bij}.  Notice that the resulting action of $\C_{\mu}$ on $\Ext_{\mu}(H,I)$ agrees with the action defined in \eqref{act1 sb}.
\end{remark}

We now consider the action of  $\Ho^2_{RBE}(H,I)$ onto itself by right translation, which is faithful and transitive. Again using Theorem \ref{bij}, we can transfer this action on $\Ext_{\mu}(H,I) = \{[\mathcal{E}(\tau, g)] \mid [(\tau, g)] \in \Ho^2_{RBE}(H,I)\}$. More precisely, for $[(\tau_1, g_1)] \in \Ho^2_{RBE}(H,I)$, the action is given by
$$[\mathcal{E}(\tau, g)]^{[(\tau_1, g_1)]} =[ \mathcal{E}(\tau + \tau_1, g + g_1)],$$
for all  $\mathcal{E}(\tau, g) \in \Ext_{\mu}(H,I)$. Notice that this action is faithful and transitive.
Consider the semi-direct product $\Gamma :=\C_{\mu} \ltimes \Ho^2_{RBE}(H,I)$  under  the action defined in Lemma \ref{lemma-act3 sb}. We wish to define an action of $\Gamma$ on $\Ext_{\mu}(H,I)$.  For $(c, h) \in \Gamma$ and $[\mathcal{E}] \in \Ext_{\mu}(H,I)$, define 
\begin{equation}\label{act4 sb}
[\mathcal{E}]^{(c, h)} = ([\mathcal{E}]^c)^h.
\end{equation}

\begin{lemma}\label{wells2 sb}
The rule  in \eqref{act4 sb}   defines an action of  $\Gamma$ on $\Ext_{ \mu}(H,I)$.
\end{lemma}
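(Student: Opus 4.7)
The plan is to verify the two axioms of a group action on $\Ext_\mu(H,I)$: that the identity of $\Gamma$ acts trivially, and that $[\mathcal{E}]^{\gamma_1 \gamma_2} = ([\mathcal{E}]^{\gamma_1})^{\gamma_2}$ holds for all $\gamma_1, \gamma_2 \in \Gamma$. The identity axiom is immediate, because each constituent action (the $\C_\mu$-action on $\Ext_\mu(H,I)$ and the translation action of $\Ho^2_{RBE}(H,I)$) is itself a group action, so the pair $(\Id, [(0,0)])$ fixes every class. The work is in the composition axiom, for which I will use the semidirect product multiplication
\[
(c_1, h_1)(c_2, h_2) \;=\; \bigl(c_1 c_2,\; h_1^{c_2} + h_2\bigr),
\]
where $h_1^{c_2}$ denotes the image under the action of Lemma \ref{lemma-act3 sb}, together with a single compatibility identity relating the two constituent actions.

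The key step is the compatibility identity
\[
([\mathcal{E}]^{h})^{c} \;=\; ([\mathcal{E}]^{c})^{h^{c}}, \qquad c \in \C_\mu,\; h \in \Ho^2_{RBE}(H,I),\; [\mathcal{E}] \in \Ext_\mu(H,I),
\]
which says that translation by $h$ intertwines with the $\C_\mu$-action in the expected equivariant way. To establish this, I would pass to representatives via Theorem \ref{bij}: write $[\mathcal{E}] = [\mathcal{E}(\tau, g)]$, $h = [(\tau_1, g_1)]$, $c = (\phi, \psi)$. Applying the definitions, the left-hand side equals $[\mathcal{E}((\tau+\tau_1)^{(\phi,\psi)}, (g+g_1)^{(\phi,\psi)})]$ and the right-hand side equals $[\mathcal{E}(\tau^{(\phi,\psi)} + \tau_1^{(\phi,\psi)}, g^{(\phi,\psi)} + g_1^{(\phi,\psi)})]$. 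These coincide because the operation $f \mapsto f^{(\phi,\psi)} = \psi^{-1} \circ f \circ (\phi, \ldots, \phi)$ is additive in $f$, which uses that $(I, R_I)$ is an abelian Rota-Baxter group and $\psi^{-1}$ is a group homomorphism.

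Granted the compatibility identity, the composition axiom follows in a short chain:
\begin{align*}
([\mathcal{E}]^{(c_1, h_1)})^{(c_2, h_2)}
&= ((([\mathcal{E}]^{c_1})^{h_1})^{c_2})^{h_2} \\
&= ((([\mathcal{E}]^{c_1})^{c_2})^{h_1^{c_2}})^{h_2} \\
&= ([\mathcal{E}]^{c_1 c_2})^{h_1^{c_2} + h_2} \\
&= [\mathcal{E}]^{(c_1 c_2,\, h_1^{c_2} + h_2)} \\
&= [\mathcal{E}]^{(c_1, h_1)(c_2, h_2)}.
\end{align*}
The first and last equalities are the definition of the $\Gamma$-action in \eqref{act4 sb}; the second is the compatibility identity just established; the third uses that $c \mapsto [\mathcal{E}]^{c}$ is a $\C_\mu$-action on $\Ext_\mu(H,I)$ and that successive translations by elements of $\Ho^2_{RBE}(H,I)$ compose by addition (since the translation action of a group on itself turns composition into the group operation). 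The main obstacle is really confined to the compatibility identity, and as noted that reduces to the additivity of the cocycle-level map $f \mapsto f^{(\phi, \psi)}$; the rest is bookkeeping.
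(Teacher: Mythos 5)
Your proposal is correct and takes essentially the same route as the paper: both reduce the action axiom to the compatibility identity $\big([\mathcal{E}]^{h}\big)^{c} = \big([\mathcal{E}]^{c}\big)^{h^{c}}$ and verify it on cocycle representatives via Theorem \ref{bij}, using the additivity of $(\tau, g) \mapsto \big(\tau^{(\phi,\psi)}, g^{(\phi,\psi)}\big)$. The only difference is presentational: you spell out the bookkeeping chain deriving the composition axiom (and the identity axiom) explicitly, which the paper compresses into ``it is enough to show.''
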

\begin{proof}
 Notice that for $(c_1,h_1),  (c_2, h_2) \in \Gamma$,  $(c_1,h_1)(c_2,h_2)=(c_1c_2, h_1^{c_2} \, h_2)$.  So, it is enough to show that $\big([\mathcal{E}]^h\big)^c = \big([\mathcal{E}]^c\big)^{h^c}$ for each $c \in \C_{\mu}$, $h \in \Ho^2_{RBE}(H,I)$ and $[\mathcal{E}] \in \Ext_{\mu}(H,I)$.
We know that  $[\mathcal{E}]=[(H,I,\mu,\tau, g)]$ for some $[(\tau, g)] \in \Ho^2_{RBE}(H,I)$.  Then, for $h=[(\tau_h, g_h)] \in \Ho^2_N(H, I)$, we have
\begin{eqnarray*}
\big([\mathcal{E}]^{h}\big)^c &=&[\mathcal{E}((\tau+\tau_h)^c), (g+g_h)^c)]\\
&=& [\mathcal{E}(\tau^c+ \tau_h^c, g^c +g_{h}^c)]\\
&=& ([\mathcal{E}(\tau^c, g^c)])^{h^c}\\
&=& \big([\mathcal{E}]^c\big)^{h^c}.
\end{eqnarray*}
The proof is now complete. \hfill $\Box$

\end{proof}
\vspace{-.4cm}
 Let $[\mathcal{E}] \in \Ext_{\mu}(H,I)$ be a fixed extension. Since the action of $\Ho^2_{RBE}(H,I)$ on $\Ext_{\mu}(H,I)$ is transitive and faithful, for  each $c \in \C_{\mu}$, there exists a unique element (say) $h_c$  in  $\Ho^2_{RBE}(H,I)$ such that  
 $$[\mathcal{E}]^{c} = [\mathcal{E}]^{h_c}.$$
Thus, we have a well-defined map $ \omega(\mathcal{E}): \C_{\mu} \rightarrow \Ho^2_{RBE}(H,I)$ given by
 \begin{equation}\label{wells-map sb}
 \omega(\mathcal{E})(c)=h_c,  \hspace{.2cm}\mbox{ for} \hspace{.2cm} c \in \C_{\mu}.
 \end{equation}
  
\begin{lemma}\label{wells3 sb}
The map $ \omega(\mathcal{E}): \C_{ \mu} \rightarrow \Ho^2_{RBE}(H,I)$ given in \eqref{wells-map sb} is a derivation with respect to the action of $\C_{\mu}$ on $H^2_{RBE}(H,I)$ given in \eqref{act3 sb}.
\end{lemma}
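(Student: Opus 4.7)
The plan is to establish that
\[
\omega(\mathcal{E})(c_1 c_2) \;=\; \omega(\mathcal{E})(c_1)^{c_2} + \omega(\mathcal{E})(c_2) \qquad \text{for all } c_1, c_2 \in \C_{\mu},
\]
which is the derivation identity with respect to the action \eqref{act3 sb}. Since $\Ho^2_{RBE}(H,I)$ acts on $\Ext_{\mu}(H,I)$ both transitively and faithfully (by right translation through the bijection of Theorem \ref{bij}), it is enough to verify this identity after applying both sides to the fixed extension $[\mathcal{E}]$.

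First I would compute $[\mathcal{E}]^{c_1 c_2}$ in two different ways. On the one hand, by the defining equation \eqref{wells-map sb},
\[
[\mathcal{E}]^{c_1 c_2} \;=\; [\mathcal{E}]^{\omega(\mathcal{E})(c_1 c_2)}.
\]
On the other hand, using that \eqref{act1 sb} is a genuine action of $\C_{\mu}$ and applying \eqref{wells-map sb} once for $c_1$,
\[
[\mathcal{E}]^{c_1 c_2} \;=\; \bigl([\mathcal{E}]^{c_1}\bigr)^{c_2} \;=\; \bigl([\mathcal{E}]^{\omega(\mathcal{E})(c_1)}\bigr)^{c_2}.
\]
At this point I would invoke the compatibility formula $\bigl([\mathcal{E}]^h\bigr)^c = \bigl([\mathcal{E}]^c\bigr)^{h^c}$ proved in Lemma \ref{wells2 sb} (which encodes the semi-direct product structure of $\Gamma$) to move $c_2$ past the cohomology class, obtaining
\[
\bigl([\mathcal{E}]^{\omega(\mathcal{E})(c_1)}\bigr)^{c_2} \;=\; \bigl([\mathcal{E}]^{c_2}\bigr)^{\omega(\mathcal{E})(c_1)^{c_2}} \;=\; [\mathcal{E}]^{\omega(\mathcal{E})(c_2) + \omega(\mathcal{E})(c_1)^{c_2}},
\]
where in the last step I use \eqref{wells-map sb} again for $c_2$, together with the fact that the action of $\Ho^2_{RBE}(H,I)$ on $\Ext_{\mu}(H,I)$ is given by group translation, so that two successive classes compose additively.

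Combining the two computations and invoking faithfulness of the translation action then forces the equality
\[
\omega(\mathcal{E})(c_1 c_2) \;=\; \omega(\mathcal{E})(c_1)^{c_2} + \omega(\mathcal{E})(c_2),
\]
which is exactly the derivation condition. The argument is essentially formal and the only thing that requires care is bookkeeping: one must verify that the convention used for the semi-direct product $\C_{\mu}\ltimes \Ho^2_{RBE}(H,I)$ in Lemma \ref{wells2 sb}, namely $(c_1,h_1)(c_2,h_2) = (c_1c_2,\, h_1^{c_2} h_2)$, matches the order in which we apply the actions above. Once this convention is fixed, no further cocycle computation is needed; the lemma follows purely from Lemma \ref{wells2 sb} and the faithful transitivity of the $\Ho^2_{RBE}(H,I)$-action.
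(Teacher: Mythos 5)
Your proof is correct and follows essentially the same route as the paper: both compute $[\mathcal{E}]^{c_1c_2}$ in two ways, move $c_2$ past the cohomology class via the compatibility formula $\bigl([\mathcal{E}]^{h}\bigr)^{c}=\bigl([\mathcal{E}]^{c}\bigr)^{h^{c}}$ from Lemma \ref{wells2 sb}, and then invoke faithfulness of the translation action of $\Ho^2_{RBE}(H,I)$ on $\Ext_{\mu}(H,I)$ to extract the derivation identity. The only difference is notational (you write $\omega(\mathcal{E})(c)$ where the paper abbreviates $h_c$), so no further comment is needed.
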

\begin{proof}
Let $c_1, c_2 \in \C_{\mu}$ and  $\omega(\mathcal{E})(c_1c_2) = h_{c_1c_2}$.  Thus, by the definition of $\omega(\mathcal{E})$,   $[\mathcal{E}]^{c_1c_2} = [\mathcal{E}]^{h_{c_1c_2}}$.  Using the fact that  $\big([\mathcal{E}]^{h}\big)^{c} = \big([\mathcal{E}]^{c}\big)^{h^c}$ for each $c \in \C_{(\nu, \mu, \sigma)}$, $h \in \Ho^2_{RBE}(H,I)$, we have
\begin{eqnarray*}
 [\mathcal{E}]^{h_{c_1c_2}} & = &[\mathcal{E}]^{(c_1c_2)}\\
 &=& \big([\mathcal{E}]^{c_1}\big)^{c_2}\\
 &=& \big([\mathcal{E}]^{h_{c_1}}\big)^{c_2}\\
 &= & \big([\mathcal{E}]^{c_2}\big)^{(h_{c_1})^{c_2}}\\
 &=& \big([\mathcal{E}]^{h_{c_2}}\big)^{(h_{c_1})^{c_2}}\\
 &=& [\mathcal{E}]^{\big(h_{c_2} + (h_{c_1})^{c_2} \big)}.
 \end{eqnarray*}
 Since the action of $\Ho^2_{RBE}(H,I)$ on $\Ext_{\mu}(H,I)$  is faithful,  it follows that $h_{c_1c_2} = (h_{c_1})^{c_2} + h_{c_2}$. This implies that $\omega(\mathcal{E})(c_1c_2)=\big(\omega(\mathcal{E})(c_1)\big)^{c_2}+\omega(\mathcal{E})(c_2)$ which shows that  $\omega(\mathcal{E})$ is a derivation. \hfill $\Box$
 
\end{proof}
Although $\omega(\mathcal{E})$ is not a homomorphism, but we can still talk about its set theoretic kernel, that is,
$$\Ker(\omega(\mathcal{E})) = \{c \in C_{\mu} \mid [\mathcal{E}]^c=[\mathcal{E}]\}.$$

Let $\mathcal{E}: 0 \rightarrow I \rightarrow E \overset{\pi}\rightarrow H \rightarrow 0$
be an extension of a Rota-Baxter group $(H, R_H)$ by an abelian Rota-Baxter group $(I, R_I)$ such that $[\mathcal{E}] \in \Ext_{\mu}(H,I)$.
Let  $\Aut_I(E, R_E)$ denote the subgroup of $\Aut(E, R_E)$ consisting of all automorphisms of $E$ which normalize $I$, that is,
$$\Aut_I(E, R_E) := \{ \gamma \in \Aut(E, R_E) \mid \gamma(y) \in I \mbox{ for all }  y \in I\}.$$ 
For $\gamma \in \Aut_I(E, R_E)$, set $\gamma_I := \gamma |_I$, the restriction of $\gamma$ to $I$, and $\gamma_H$ to be the automorphism of $(H, R_H)$ induced by $\gamma$. More precisely, $\gamma_H(h) = \pi(\gamma(s(h)))$ for all $h \in H$, where $s$ is a st-section of $\pi$. Notice that the definition of $\gamma_H$ is independent of  the choice of a st-section. Define a group homomorphism $\rho(\mathcal{E}) :  \Aut_I(E, R_E) \rightarrow  \Aut(H, R_H) \times \Aut(I, R_I)$ by
$$\rho(\mathcal{E})(\gamma)=(\gamma_H, \gamma_I).$$ 

\begin{prop}\label{wells4 sb}
For the extension $\mathcal{E}$, we have   $\IM(\rho(\mathcal{E})) = \Ker(\omega(\mathcal{E})) \subset \C_{\mu}$.
\end{prop}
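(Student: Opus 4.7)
\medskip

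\noindent\textbf{Proof proposal.} The strategy is to prove the two containments separately, using in both directions the bijection between equivalence classes of extensions and maps of short exact sequences induced by automorphisms of $E$ that normalize $I$.

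\emph{Forward containment} $\IM(\rho(\mathcal{E})) \subseteq \Ker(\omega(\mathcal{E}))$. Fix $\gamma \in \Aut_I(E, R_E)$ and set $c = (\gamma_H, \gamma_I)$. First I would check that $c$ actually lands in $\C_\mu$: pick a st-section $s$ of $\pi$ and note that $s' := \gamma \circ s \circ \gamma_H^{-1}$ is a st-section of $\pi$ as well (a direct check using $\pi \gamma = \gamma_H \pi$ and $\gamma(I) = I$); then the action induced by $s'$ via \eqref{actions} is $\gamma_I \mu_{\gamma_H^{-1}(\cdot)} \gamma_I^{-1}$, and since the coupling $\bar\mu$ is independent of the choice of st-section, comparing with $\mu$ (up to an inner automorphism coming from the difference $s' = s \cdot \theta$) gives the required conjugation relation $\mu_h = \gamma_I^{-1} \mu_{\gamma_H(h)} \gamma_I$. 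Next, the diagram
\[
\begin{CD}
0 @>>> I @>{i \gamma_I}>> E @>{\gamma_H^{-1} \pi}>> H @>>> 0\\
&& @V{\Id}VV @V{\gamma}VV @V{\Id}VV \\
0 @>>> I @>{i}>> E @>{\pi}>> H @>>> 0
\end{CD}
\]
commutes by the definitions of $\gamma_I,\gamma_H$, and $\gamma$ is an isomorphism of Rota-Baxter groups because $\gamma \in \Aut(E, R_E)$. This is precisely an equivalence between $\mathcal{E}^c$ and $\mathcal{E}$, hence $[\mathcal{E}]^c = [\mathcal{E}]$ and $c \in \Ker(\omega(\mathcal{E}))$.

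\emph{Reverse containment} $\Ker(\omega(\mathcal{E})) \subseteq \IM(\rho(\mathcal{E}))$. Conversely, take $c = (\phi, \psi) \in \C_\mu$ with $[\mathcal{E}]^c = [\mathcal{E}]$. By definition of equivalence of Rota-Baxter extensions there is an isomorphism $\gamma : E \to E$ of Rota-Baxter groups fitting into the commutative diagram
\[
\begin{CD}
0 @>>> I @>{i \psi}>> E @>{\phi^{-1}\pi}>> H @>>> 0\\
&& @V{\Id}VV @V{\gamma}VV @V{\Id}VV \\
0 @>>> I @>{i}>> E @>{\pi}>> H @>>> 0.
\end{CD}
\]
From $\gamma i \psi = i$ we read off $\gamma(I) = I$ and $\gamma|_I = \psi^{-1}$, so $\gamma \in \Aut_I(E, R_E)$; from $\pi \gamma = \phi^{-1}\pi$ we get $\gamma_H = \phi^{-1}$. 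Hence $\gamma^{-1} \in \Aut_I(E, R_E)$ satisfies $(\gamma^{-1})_I = \psi$ and $(\gamma^{-1})_H = \phi$, so $\rho(\mathcal{E})(\gamma^{-1}) = c$.

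\emph{Main obstacle.} The only genuinely nontrivial step is the verification, in the forward direction, that the pair $(\gamma_H, \gamma_I)$ really belongs to $\C_\mu$, i.e.\ that the conjugation identity defining $\C_\mu$ holds on the nose and not merely up to an inner automorphism of $I$. The cleanest way around this is to argue via the coupling: \emph{any} two st-sections of $\mathcal{E}$ induce actions that agree modulo $\Inn(I)$ (this is recorded in the paper after equation \eqref{st action }), so it suffices to exhibit \emph{one} st-section with respect to which $\mu$ becomes $\gamma_I^{-1} \mu_{\gamma_H(\cdot)} \gamma_I$. The section $s' = \gamma \circ s \circ \gamma_H^{-1}$ does the job, after absorbing the inner discrepancy into $\tau$ (using Theorem~\ref{cocycle change}). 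Everything else reduces to the standard diagram-chase for equivalences of extensions.
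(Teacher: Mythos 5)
Your proposal follows the same overall route as the paper's proof: first verify that $(\gamma_H,\gamma_I)$ lies in $\C_{\mu}$, then obtain $\IM(\rho(\mathcal{E}))\subseteq\Ker(\omega(\mathcal{E}))$ by exhibiting an equivalence between $\mathcal{E}$ and $\mathcal{E}^{(\gamma_H,\gamma_I)}$, and finally run the diagram chase backwards for the opposite containment. Your verification of membership in $\C_{\mu}$ via the transported section $s'=\gamma\circ s\circ\gamma_H^{-1}$ is a legitimate, slightly more conceptual alternative to the paper's direct elementwise computation (the paper expands $\gamma_I^{-1}\mu_{\gamma_H(h)}\gamma_I(y)$ through the section and cancels a conjugation by an element of the abelian group $I$). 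The reverse containment is handled exactly as in the paper, up to your choice of orientation.

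Two points need attention. First, the diagram you draw in the forward direction does not commute as written: with $\mathcal{E}^{(\gamma_H,\gamma_I)}$ on top, $\mathcal{E}$ on the bottom, and $\gamma$ as the vertical map, the left square requires $\gamma(\gamma_I(y))=y$ for all $y\in I$, which fails in general. The vertical arrow must be $\gamma^{-1}$ (this is what your own convention in the reverse direction dictates, where you read off $\gamma|_I=\psi^{-1}$ from $\gamma i\psi=i$), or else keep $\gamma$ and interchange the two rows, which is what the paper does. This is a trivially repairable slip, and since equivalence of extensions is symmetric the conclusion $[\mathcal{E}]^{(\gamma_H,\gamma_I)}=[\mathcal{E}]$ is unaffected. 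Second, the ``main obstacle'' you describe is not actually present: this proposition lives in the abelian setting (the extension is by an \emph{abelian} Rota-Baxter group, as required for $\Ho^2_{RBE}(H,I)$ to make sense), so $\Inn(I)$ is trivial, the action $\mu$ of \eqref{actions} is honestly independent of the st-section (the paper records this immediately after \eqref{actions}), and no appeal to \eqref{st action } or Theorem \ref{cocycle change} is needed. Indeed, had $I$ been genuinely non-abelian, your work-around would not close the gap you identify: agreement of actions up to inner automorphisms would not yield the exact identity $\mu_h=\psi^{-1}\mu_{\phi(h)}\psi$ that defines $\C_{\mu}$. Fortunately, in the setting at hand the inner discrepancy vanishes identically and your membership argument is correct as it stands.
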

 \begin{proof}
 We first  show that $\IM(\rho(\mathcal{E})) \subset \C_{\mu}$. To show this, we need to prove  that $\mu_{h} = \gamma_I^{-1} \mu_{\gamma_H(h)} \gamma_I$ for all $h \in H$. Let $s$ be an st-section of $\pi$  and $ x \in E$. Notice that $\gamma_I^{-1}$ is the restriction of $\gamma^{-1}$ on $I$.   Also notice that  for a given $x \in E$, $s(\pi(h)) = h y_h$ for some $y_h \in I$.  Now for $h \in H$ and $ y \in I$, we have
\begin{eqnarray*}
\gamma_I^{-1} \mu_{\gamma_H(h)} \gamma_I(y) &=& \gamma^{-1} \big(\mu_{\pi(\gamma(s(h)))}(\gamma(y))\big)\\
&=& \gamma^{-1}\big(s(\pi(\gamma(s(h))))^{-1} \gamma (y)  s(\pi(\gamma(s(h))))\big)\\
&=& \gamma^{-1}\big( (\gamma(s(h)) y_{\gamma(s(h))})^{-1}  \gamma(y) \gamma(s(h)) y_{\gamma(s(h))} \big)\\
&=& (s(h) \gamma^{-1}( y_{\gamma(s(h))}))^{-1} y s(h) \gamma^{-1}( y_{\gamma(s(h))}) \\
&=& \gamma^{-1}( y_{\gamma(s(h))})^{-1} s(h)^{-1} y s(h) \gamma^{-1}( y_{\gamma(s(h))}) \\
&=&  \gamma^{-1}( y_{\gamma(s(h))})^{-1} \mu_h(y) \gamma^{-1}( y_{\gamma(s(h))}) \\
&=& \mu_h(y).
\end{eqnarray*}
Hence, $\mu_{h} = \gamma_I^{-1} \mu_{\gamma_H(h)} \gamma_I$.  
Now, we prove that $\IM(\rho(\mathcal{E})) = \Ker(\omega(\mathcal{E}))$.

Let $\rho(\mathcal{E})(\gamma) =  (\gamma_H, \gamma_I)$ for  $\gamma \in \Aut_I(E, R_E)$.  We know that $s(\pi(x))=x y_x $ for some $y_x \in I$.  Thus, we have
\begin{eqnarray*}
\gamma_H^{-1}(\pi(\gamma(s(h)))) &=& \pi\big(\gamma^{-1}(s(\pi(\gamma(s(h)))))\big)\\
&=& \pi\big(\gamma^{-1}\big(\gamma(s(h)) y_{\gamma(s(h))}\big)\big)\\
&= & h,
\end{eqnarray*}
 which implies that the diagram commutes
$$\begin{CD}
0 @>>> I @>>> E@>{{\pi} }>> H @>>> 0\\ 
&& @V{\text{Id}} VV @V{\gamma} VV @V{\text{Id} }VV \\
0 @>>> I @>{\gamma_I}>> E @>{\gamma_H^{-1} \pi}>> H  @>>> 0.
\end{CD}$$
Hence $[(\mathcal{E})]^{(\gamma_H, \gamma_I)} =[(\mathcal{E})]$, which shows that $\IM(\rho(\mathcal{E})) \subseteq \Ker(\omega(\mathcal{E}))$. 

Conversely, if $(\phi, \psi) \in  \Ker(\omega(\mathcal{E}))$,  then there exists a Rota-Baxter group homomorphism $\gamma: (E, R_E) \rightarrow (E, R_E)$ such that the diagram commutes
$$\begin{CD}
0 @>>> I @>>> E@>{{\pi} }>> H @>>> 0\\ 
&& @V{\text{Id}} VV@V{\gamma} VV @V{\text{Id} }VV \\
0 @>>> I @>{\psi}>> E @>{ \phi^{-1} \pi}>> H  @>>> 0.
\end{CD}$$
 It is now obvious that $\gamma \in \Aut_I(E, R_E)$, $\psi = \gamma_I$ and $\phi = \gamma_H$. Hence
 $\rho(\mathcal{E})(\gamma)=(\phi, \psi)$, which completes the proof.  \hfill   $\Box$

\end{proof}
\vspace{-.5cm}
Continuing with the above setting, set $\Aut^{H, I}(E, R_E) := \{\gamma \in \Aut(E, R_E) \mid \gamma_I = \Id,  \gamma_H = \Id\}$. Notice that  $\Aut^{H,I}(E, R_E)$ is precisely the kernel of $\rho(\mathcal{E})$. Hence, by using Proposition \ref{wells4 sb}, we get the following.

\begin{thm}\label{wells5 sb}
Let $\mathcal{E}: 0 \rightarrow I \rightarrow E \overset{\pi}\rightarrow H\rightarrow 0$ be an extension of a Rota-Baxter group $(H, R_E)$ by an abelian Rota-Baxter group $(I, R_I)$ such that $[\mathcal{E}] \in \Ext_{ \mu}(H,I)$. Then we have the following exact sequence of groups 
$$0 \rightarrow \Aut^{H,I}(E, R_E) \rightarrow \Aut_I(E, R_E) \stackrel{\rho(\mathcal{E})}{\longrightarrow} \C_{\mu} \stackrel{\omega(\mathcal{E})}{\longrightarrow} \Ho^2_{RBE}(H,I),$$
where $\omega(\mathcal{E})$ is, in general,  only a derivation.
\end{thm}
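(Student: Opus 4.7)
The proof plan is to assemble the exact sequence from pieces that have essentially already been proved, so the task is primarily bookkeeping rather than new content. I would proceed in three short stages.

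First, I would verify exactness at $\Aut_I(E,R_E)$. The map $\rho(\mathcal{E})$ is a group homomorphism by construction, since $(\gamma \gamma')_H = \gamma_H \gamma'_H$ and $(\gamma \gamma')_I = \gamma_I \gamma'_I$ follow from the fact that the restriction and the induced automorphism respect composition (and from the uniqueness of the induced map on $H \cong E/I$). By the very definition of $\Aut^{H,I}(E,R_E)$, an element $\gamma \in \Aut_I(E,R_E)$ lies in the kernel of $\rho(\mathcal{E})$ iff $\gamma_I = \Id_I$ and $\gamma_H = \Id_H$, so $\Ker(\rho(\mathcal{E})) = \Aut^{H,I}(E,R_E)$. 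This gives exactness at $\Aut_I(E,R_E)$, and also at $\Aut^{H,I}(E,R_E)$ where the incoming map is just the inclusion (injectivity is trivial).

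Second, I would establish exactness at $\C_{\mu}$. This is precisely the content of Proposition~\ref{wells4 sb}, which shows $\IM(\rho(\mathcal{E})) = \Ker(\omega(\mathcal{E}))$, where the kernel is understood in the set-theoretic sense (since $\omega(\mathcal{E})$ is only a derivation by Lemma~\ref{wells3 sb}). So this step reduces to citing that proposition.

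Finally, I would tie things up by noting that the sequence makes sense as stated: the initial three groups and the arrows between them are honest group homomorphisms, while $\omega(\mathcal{E})$, despite only being a derivation, still has a well-defined set-theoretic kernel equal to $\IM(\rho(\mathcal{E}))$, which is what exactness requires at that spot. No additional calculation is needed beyond pointing to the already-established identities.

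There is no real obstacle, since the hard work — both that $\rho(\mathcal{E})$ lands in $\C_{\mu}$ and that its image equals $\Ker(\omega(\mathcal{E}))$ — was absorbed into Proposition~\ref{wells4 sb}. The only subtle bit worth flagging explicitly in the write-up is that exactness at $\C_{\mu}$ is \emph{set-theoretic} exactness, because $\Ho^2_{RBE}(H,I)$ is a group but $\omega(\mathcal{E})$ respects its structure only as a derivation relative to the $\C_{\mu}$-action of Lemma~\ref{lemma-act3 sb}; one should remark that this is the natural notion of exactness in the Wells framework, as in the classical group-extension case.
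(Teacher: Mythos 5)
Your proposal is correct and follows essentially the same route as the paper: the paper likewise observes that $\Aut^{H,I}(E,R_E)$ is by definition the kernel of $\rho(\mathcal{E})$ and then invokes Proposition~\ref{wells4 sb} for $\IM(\rho(\mathcal{E}))=\Ker(\omega(\mathcal{E}))$, with the kernel of the derivation $\omega(\mathcal{E})$ understood set-theoretically. Your write-up merely makes explicit the bookkeeping (that $\rho(\mathcal{E})$ is a homomorphism and that exactness at $\C_{\mu}$ is set-theoretic) which the paper leaves implicit.
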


In the following result we give a new interpretation of the group $\Aut^{H, I}(E, R_E)$.
\begin{prop}\label{wells6 sb}
Let  $\mathcal{E} :  0 \rightarrow I \rightarrow E \overset{\pi}\rightarrow H \rightarrow 0$ be a Rota-Baxter  extension of $(H, R_H)$ by $(I, R_I)$ such that $[\mathcal{E}] \in \Ext_{\mu}(H, I)$.  Then  $\Aut^{H,I}(E, R_E) \cong \Z^1_{RBE}(H,I)$.
\end{prop}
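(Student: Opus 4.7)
The plan is to mimic the classical Wells-type correspondence between $\Aut^{H,I}(E)$ and crossed homomorphisms, and then check that the Rota-Baxter compatibility of an automorphism translates precisely into the second defining relation of $\Z^1_{RBE}(H,I)$.

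First, fix a st-section $s$ of $\mathcal{E}$ and let $\tau$, $g$ be the associated $2$-cocycle and Rota-Baxter cocycle from \eqref{cocycle 1} and \eqref{RB map}. Given $\gamma \in \Aut^{H,I}(E,R_E)$, the condition $\gamma_H = \Id$ forces $\pi(\gamma(s(h))) = h$, so there exists a unique $\lambda(h) \in I$ with $\gamma(s(h)) = s(h)\lambda(h)$. Define $\Phi: \Aut^{H,I}(E,R_E) \to C^1(H,I)$ by $\Phi(\gamma) = \lambda$. The identity $\gamma(s(h_1)s(h_2)) = \gamma(s(h_1))\gamma(s(h_2))$, expanded via $s(h_1)s(h_2) = s(h_1h_2)\tau(h_1,h_2)$ and using $\gamma|_I = \Id$, gives (after cancelling $s(h_1h_2)\tau(h_1,h_2)$ and using that $I$ is abelian) the derivation relation $\lambda(h_1h_2) = \lambda(h_2) + \mu_{h_2}(\lambda(h_1))$.

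Second, the Rota-Baxter relation $\gamma R_E = R_E \gamma$, evaluated at $s(h)$, yields on one side $\gamma(R_E(s(h))) = \gamma(s(R_H(h))g(h)) = s(R_H(h))\lambda(R_H(h))g(h)$, and on the other side, using \eqref{RB reduced}, $R_E(\gamma(s(h))) = R_E(s(h)\lambda(h)) = s(R_H(h))g(h)R_I(\mu_{R_H(h)}(\lambda(h)))$. Comparison (again using that $I$ is abelian) gives $\lambda(R_H(h)) = R_I(\mu_{R_H(h)}(\lambda(h)))$, so $\lambda \in \Z^1_{RBE}(H,I)$. Thus $\Phi$ lands in $\Z^1_{RBE}(H,I)$. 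A direct computation, $(\gamma_1 \gamma_2)(s(h)) = \gamma_1(s(h)\lambda_2(h)) = s(h)\lambda_1(h)\lambda_2(h)$, shows $\Phi$ is a group homomorphism.

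Third, $\Phi$ is injective: if $\lambda \equiv 0$ then $\gamma$ fixes the set $\{s(h) : h \in H\}$ pointwise and fixes $I$ pointwise, so $\gamma(s(h)y) = s(h)y$, i.e. $\gamma = \Id$. For surjectivity, given $\lambda \in \Z^1_{RBE}(H,I)$, define $\gamma: E \to E$ by $\gamma(s(h)y) = s(h)\lambda(h)y$. One checks that $\gamma$ is a group homomorphism using the derivation identity, that $\gamma_I = \Id$ and $\gamma_H = \Id$ by construction, that $\gamma$ is bijective with inverse $s(h)y \mapsto s(h)(-\lambda(h))y$, and finally that $\gamma$ commutes with $R_E$ by running the computation of the previous paragraph in reverse, using the relation $\lambda(R_H(h)) = R_I(\mu_{R_H(h)}(\lambda(h)))$.

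The only slightly delicate step is the Rota-Baxter compatibility: all the other verifications are routine consequences of abelian-extension bookkeeping, but one must be careful to apply $R_E$ to a general element $s(h)y$ via \eqref{RB reduced} and track the $g(h)$-term, which ultimately cancels because $I$ is abelian and $\gamma$ is the identity on $I$.
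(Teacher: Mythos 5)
Your proposal is correct and follows essentially the same route as the paper: your map $\Phi$ is the paper's $\zeta$ (defined via $\gamma(s(h)) = s(h)\lambda(h)$), your surjectivity construction is the paper's $\eta$, and the two key verifications --- the derivation identity and the Rota-Baxter compatibility $\lambda(R_H(h)) = R_I(\mu_{R_H(h)}(\lambda(h)))$ via \eqref{RB reduced} --- are the same computations. The only difference is organizational: you prove one map is a bijective homomorphism, while the paper exhibits the two maps and notes they are mutually inverse.
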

\begin{proof}
We know that every element $x \in E$ has a unique expression of the form $x=s(h)y$ for some $h \in H$ and $y \in I$. Let us define a map $\eta: \Z^1_{RBE}(H,I)  \rightarrow  \Aut^{H,I}(E, R_E)$ by 
$$\eta(\lambda)((s(h)y) = s(h) \lambda(h) y,$$
where $\lambda \in \Z^1_{RBE}(H,I)$.
Notice that the image of $\eta(\lambda)$ is independent of the choice of a st-section.
We claim that $\eta(\lambda) \in  \Aut^{H,I}(E, R_E)$.  It follows from the theory of group extensions that $\eta(\lambda)$ is an automorphism of $E$ and it fixes $I$ and $H$.  We just check that $\eta(\lambda)$ commutes with the operator $R_E$. For $h \in H$ and $y_1 \in I$, we have
\begin{align*}
\eta(\lambda) (R_E(s(h)y))&=\eta(\lambda)\big(s(R_H(h)) y_h R_I(\mu_{R_H(h)}(y))\big)\\
&=s(R_H(h)) \lambda(R_H(h)) y_h  R_I(\mu_{R_H(h)}(y))\\
&=s(R_H(h)) R_I(\mu_{R_H(h)}(\lambda(h))) y_h  R_I(\mu_{R_H(h)}(y))\\
&=s(R_H(h))y_h R_I(\mu_{R_H(h)}(\lambda(h)y))  \\
&=R_E\big(s(h)\lambda(h)y\big)\\
&=R_E( \eta(\lambda)(s(h)y))
\end{align*}
which shows that $\eta(\lambda) \in  \Aut^{H,I}(E, R_E)$.

 We will now define a map $\zeta :  \Aut^{H,I}(E, R_E)  \rightarrow \Z^1_{RBE}(H,I)$ as follows. For $\gamma \in \Aut^{H,I}(E, R_E)$ and  $h \in H$, there exists a unique element (say) $y^{\gamma}_h \in I$ such that  $\gamma(s(h)) = s(h) y^{\gamma}_h$. Thus, for $h \in H$,  define $\zeta$ by 
 $$\zeta(\gamma)(h) = y^{\gamma}_h.$$
 Notice that $\zeta(\gamma)$ is independent of the choice of a st-section. Further, it is  a group-theoretic derivation that  follows from the theory of group extensions. We check that it satisfies
\begin{eqnarray*}
\gamma(R_E(s(h))) &=& R_E(\gamma(s(h))),\\
\gamma(s(R_H(h)) y_h)&=& R_E(s(h)y^{\gamma}_h),\\
s(R_H(h))y^{\gamma}_{R_H(h)} y_h&=&s(R_H(h))y_h R_I(\mu_{R_H(h)}(y^{\gamma}_h)).
\end{eqnarray*}
Hence, we have $\zeta(\gamma)(R_H(h))=R_I(\mu_{R_H(h)}(\zeta(\gamma)(h)))$. We have now proved that  $\zeta(\gamma) \in  \Z^1_{RBE}(H,I)$. Both $\eta$ and $\zeta$ are homomorphisms, and $\eta \zeta$ and $\zeta \eta$  are identity on  $\Autb^{H,I}(E)$ and $\Z^1_N(H,I)$, respectively, is obvious.  Hence $\Autb^{H,I}(E) \cong  \Z^1_N(H,I)$, and the proof is complete. \hfill $\Box$

 \end{proof}
 \vspace{-.5cm}
We finally get the following Well's like exact sequence for Rota-Baxter groups.
\begin{thm}\label{wells7 sb}
Let $\mathcal{E}: 0 \rightarrow I \rightarrow E \overset{\pi}\rightarrow H \rightarrow 0$ be an extension of a Rota-Baxter group $(H, R_H)$ by an abelian Rota-Baxter group $(I, R_I)$ such that $[\mathcal{E}] \in \Ext_{\mu}(H,I)$. Then we have the following exact sequence of groups 
$$0 \rightarrow \Z^1_{RBE}(H,I) \rightarrow \Aut_I(E, R_E) \stackrel{\rho(\mathcal{E})}{\longrightarrow} \C_{(\mu,\sigma, \nu)} \stackrel{\omega(\mathcal{E})}{\longrightarrow} \Ho^2_{RBE}(H,I),$$
where $\omega(\mathcal{E})$ is, in general, only a derivation.
\end{thm}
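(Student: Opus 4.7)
The plan is to obtain this theorem as an immediate corollary of the two preceding results, namely Theorem \ref{wells5 sb} and Proposition \ref{wells6 sb}. Theorem \ref{wells5 sb} already delivers the four-term exact sequence
$$0 \rightarrow \Aut^{H,I}(E, R_E) \rightarrow \Aut_I(E, R_E) \stackrel{\rho(\mathcal{E})}{\longrightarrow} \C_{\mu} \stackrel{\omega(\mathcal{E})}{\longrightarrow} \Ho^2_{RBE}(H,I),$$
so the only work remaining is to replace the left-most term $\Aut^{H,I}(E, R_E)$ with $\Z^1_{RBE}(H,I)$ in a way that preserves exactness at $\Aut_I(E, R_E)$.

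First I would invoke Proposition \ref{wells6 sb}, which supplies mutually inverse group homomorphisms
$$\eta : \Z^1_{RBE}(H,I) \longrightarrow \Aut^{H,I}(E, R_E), \qquad \zeta : \Aut^{H,I}(E, R_E) \longrightarrow \Z^1_{RBE}(H,I).$$
Composing $\eta$ with the inclusion $\Aut^{H,I}(E,R_E) \hookrightarrow \Aut_I(E,R_E)$ produces a canonical homomorphism $\Z^1_{RBE}(H,I) \to \Aut_I(E, R_E)$; call it $\iota$. Since $\eta$ is an isomorphism onto $\Aut^{H,I}(E, R_E) = \Ker(\rho(\mathcal{E}))$, the map $\iota$ is injective and its image equals $\Ker(\rho(\mathcal{E}))$.

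Next I would splice these facts into the sequence of Theorem \ref{wells5 sb}. Exactness at $\Z^1_{RBE}(H,I)$ is the injectivity of $\iota$, exactness at $\Aut_I(E, R_E)$ is the equality $\IM(\iota) = \Ker(\rho(\mathcal{E}))$ inherited from Theorem \ref{wells5 sb} via Proposition \ref{wells6 sb}, and exactness at $\C_{\mu}$ is precisely the content of Proposition \ref{wells4 sb}, which was already used in Theorem \ref{wells5 sb}. Finally, the assertion that $\omega(\mathcal{E})$ is only a derivation (rather than a homomorphism) was established in Lemma \ref{wells3 sb}, and that property is inherited unchanged in the reformulated sequence.

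There is no serious obstacle here: the whole statement is a repackaging of Theorem \ref{wells5 sb} via the identification $\Aut^{H,I}(E,R_E) \cong \Z^1_{RBE}(H,I)$. If anything, the only mild point to verify is that the identification $\eta$ does not spoil the exactness at $\Aut_I(E,R_E)$, but this is automatic because $\eta$ is an isomorphism onto the original kernel of $\rho(\mathcal{E})$.
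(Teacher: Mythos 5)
Your proposal is correct and is essentially the paper's own argument: the paper states Theorem \ref{wells7 sb} without a separate proof precisely because it follows immediately by splicing the isomorphism $\Aut^{H,I}(E, R_E) \cong \Z^1_{RBE}(H,I)$ of Proposition \ref{wells6 sb} into the exact sequence of Theorem \ref{wells5 sb}, exactly as you do. (The only cosmetic discrepancy is the paper's typo $\C_{(\mu,\sigma,\nu)}$ in the statement, which should read $\C_{\mu}$ as in your argument.)
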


\end{document}